\apptocmd{\sloppy}{\hbadness 10000\relax}{}{}
\apptocmd{\sloppy}{\vbadness 10000\relax}{}{}
\newtheorem{thm}{Theorem}[section]
\newtheorem{lem}[thm]{Lemma}
\newtheorem{prop}[thm]{Proposition}
\newtheorem{cor}[thm]{Corollary}
\newtheorem{rmk}[thm]{Remark}
\newtheorem{definition}[thm]{Definition}
\renewenvironment{abstract}{%
  \noindent\bfseries\abstractname:\normalfont}{}
\numberwithin{equation}{section}
\newcommand{\RR}{\mathbb{R}}      
\begin{document}

\title{The Singular Strata of a Free-Boundary problem for harmonic measure}

\author{Sean McCurdy}
\maketitle

\begin{abstract}
  In this paper, we obtain \textit{quantitative} estimates on the fine structure of the singular set of the mutual boundary $\partial \Omega^{\pm}$ for pairs of complementary domains, $\Omega^+, \Omega^- \subset \RR^n$ which arise in a class of two-sided free boundary problems for harmonic measure.  These estimates give new insight into the structure of the mutual boundary, $\partial \Omega^{\pm}.$ 
\end{abstract}

\tableofcontents

\section{Introduction}\label{S:intro}

The focus of this paper\footnote{This research was partially supported by NSF grant DMS-1361823.}
 is the study of a class of two-phase free boundary problem for harmonic measure.  Let $n \ge 3$, $\Omega^+ \subset \RR^n$ and $\Omega^- = \overline{\Omega^+}^c$ be unbounded NTA domains (see Definition \ref{NTA def}), $\omega^{\pm}$ their associated harmonic measures, and $u^{\pm}$ associated Green's functions with poles at infinity.  Let $\omega^- \ll \omega^+ \ll \omega^-$ and $h= \frac{d\omega^-}{d\omega^+}$ satisfy $\ln(h) \in C^{0, \alpha}$ for  some $0< \alpha < 1$.  We obtain new results on the structure of the geometric singular set of the boundary, $\partial \Omega^{\pm}$.

This problem was introduced without the regularity assumption on $\omega^{\pm}$ by Kenig, Preiss, and Toro \cite{KenigPreissToro09}, with other work under the assumption that $\ln(h) \in VMO(\partial \Omega^{\pm})$ by Kenig and Toro \cite{KenigToro06}, Badger \cite{Badger11} \cite{Badger13}, and Badger, Engelstein, and Toro \cite{BadgerEngelsteinToro17}.    Questions about the structure of the free boundary and the singular set when $\ln(h) \in C^{0, \alpha}$ for $0< \alpha < 1$ have been addressed by Engelstein \cite{Engelstein16} and Badger, Engelstein, Toro \cite{BadgerEngelsteinToro20}, respectively. In \cite{Engelstein16}, Engelstein shows that under the additional assumption that the boundary is sufficiently flat in the sense of Reifenberg, the boundary is locally $C^{1, \alpha}$.  In \cite{BadgerEngelsteinToro20}, the authors remove the assumption of flatness and prove that the geometric singular set is contained in countably many $C^{1, \beta}$ submanifolds of the appropriate dimension.  See \cite{KenigPreissToro09} for an overview of this problem in lower dimensions, and \cite{BadgerEngelsteinToro17,BadgerEngelsteinToro20} for further background.

Until recently, almost all work on the two-sided free boundary problem for harmonic measure in higher dimensions has operated under the assumption that $\Omega^{\pm}$ are NTA domains because the NTA conditions allow for scale-invariant estimates of harmonic measure.  However, Azzam, Mourgoglou, Tolsa, and Volberg \cite{amtv-twophase16} proved, among other things, that if we relax the assumption that the domains are NTA, then $\omega^- \ll \omega^+ \ll \omega^-$ on $G \subset \partial \Omega^{\pm}$ implies that $G$ can be decomposed into $G = R \cup B$, where $R$ is $(n-1)$-rectifiable and $\omega^{\pm}(B) = 0$.  However, we shall work under the assumption that $\Omega^{\pm}$ are NTA domains.

Based upon \cite{BadgerEngelsteinToro20}, we know that when $\ln(h) \in C^{0, \alpha}$, the singular set of $\partial \Omega^{\pm}$ is countably $C^{1, \beta}$-rectifiable.  This leaves open the question of whether or not the singular set is dense, or more generally how it sits in space.  In this paper, we answer the question of how the singular set ``sits in space."  In particular, we provide upper Minkowski content bounds upon the quantitative strata of the singular set (see Theorem \ref{main theorem}). The main approach will be to follow \cite{Engelstein16} and consider jump functions $v = u^+ - u^-$ which are \textit{almost} harmonic and employ the Almgren frequency function and geometric techniques as in \cite{HanLin_nodalsets, CheegerNaberValtorta15} in conjunction with the powerful Federer dimension-reduction techniques of \cite{NaberValtorta17-1,deLellisMarcheseSpadaroValtorta16}.  While these tools are common for problems in Calculus of Variations, it is important to note that the jump functions $v$ are not minimizers of any energy, nor do they satisfy any global PDE.  

The author would like to thank T. Toro, whose generous insight, suggestions, and support were essential to bringing this project to fruition. The author is also grateful to the reviewers for their many excellent comments which dramatically improved this paper.

\section{Definitions and Statement of Main Results}\label{S:defs}

\subsection{Domains and their Green's functions.}

Non-tangentially accessible (NTA) domains were formally introduced by Jerison and Kenig in \cite{JerisonKenig82} to study the boundary behavior of PDEs on non-smooth domains.  The definition is given, below.

\begin{definition}\label{NTA def}
A domain $\Omega \subset \RR^n$ is a non-tangentially accessible (NTA) domain if there exist constants $M > 1, R_0 > 0$ such that the following holds:

1. $\Omega$ satisfies the \emph{corkscrew condition}.  That is, for any $Q \in \partial \Omega$ and $0< r< R_0$, there exists a point $A_r(Q) \in \Omega$ with the following two properties:
$$|A_r(Q) - Q| < r \quad \text{and} \quad B_{\frac{r}{M}}(A_r(Q)) \subset \Omega.$$ 

2. $\overline{\Omega}^c $ also satisfies the \emph{corkscrew condition}.

3. $\Omega$ satisfies the \emph{Harnack Chain condition}.  That is, for any $\epsilon > 0$ and $Q \in \partial \Omega$, if $x_1, x_2 \in \Omega \cap B_{\frac{R_0}{4}}(Q) \setminus B_{\epsilon}(\partial \Omega)$ and $|x_1-x_2| \le 2^k \epsilon$, then there exists a ``Harnack chain" of balls $\{B_{r_i}(y_i)\}_{i=1}^N$ satisfying
\begin{enumerate}
    \item $x_1 \in B_{r_i}(y_1)$ and $x_1 \in B_{r_N}(y_N).$
    \item For all $i =1 , ..., N$, $B_{r_i}(y_i) \subset \Omega$.
    \item For all $i= 1, ..., N-1$, $B_{r_i}(y_i) \cap B_{r_{i+1}}(y_{i+1}) \not = \emptyset$.
    \item $N \le Mk$.
    \item For all $i = 1, ..., N$ $$\frac{1}{2M}min_{1,2}\{\text{dist}(x_i, \partial \Omega) \} \le r_i \le \text{dist}(y_i, \partial \Omega).$$
\end{enumerate}
Note that by increasing the radii if necessary, we may assume that $r_i \sim_M \text{dist}(y_i, \partial \Omega).$ 

We say that $\Omega^+$ is a \emph{two-sided NTA domain} if both $\Omega^+$ and $\Omega^- := \overline{\Omega}^c$ are NTA domains.  We shall refer to the complementary pair $\Omega^{\pm}$ of domains as complementary two-sided NTA domains and denote their mutual boundary by $\partial \Omega^{\pm}.$
\end{definition}

In this paper, we shall only deal with unbounded, two-sided NTA domains. That is, we shall assume that $R_0=\infty$.  However, the results are essentially local.

\begin{definition}
For a $\Omega^{\pm} \subset \mathbb{R}^n$ two sided NTA domains, we shall use 
$$u^{\pm}$$
to denote the \emph{Green's function with pole at infinity} corresponding to $\Omega^{\pm}$, respectively.  

Recall that $u^{\pm}$ are unique up to scalar multiplication and that to each $u^{\pm}$ is associated the \emph{harmonic measure} 
$$\omega^{\pm},$$ defined by the property that for all $\phi \in C^{\infty}_c(\RR^n)$
$$
\int  \Delta \phi u^{\pm} dV = \int \phi d\omega^{\pm}.
$$
See \cite{GarnettMarshall} for more details about harmonic measures.
\end{definition}

Observe that if $\omega^+$ is the harmonic measure associated to $u^+$, then $c\omega^{+}$ is the harmonic measure associated to $cu^+$ for any $c >0.$ 

If $f \in C^{0, \alpha}(\RR^n)$, we shall use $||f||_{\alpha}$ to denote the local norm,
\begin{align*}
||f||_{\alpha} := \sup_{B_2(0)}|f| + \sup_{x \not= y \in B_2(0)}\frac{|f(x) - f(y)|}{|x - y|^{\alpha}}.
\end{align*}

\begin{definition}\label{domain class def}  We define the class $\mathcal{D}(n, \alpha, M_0)$ to be the collection of domains $\Omega^{\pm} \subset \RR^n$ such that $\Omega^{\pm}$ are complementary, unbounded two-sided NTA domains for which $M < M_0$,  $\omega^- << \omega^+ << \omega^-$, the Radon-Nikodym derivative $h = \frac{d\omega^-}{d\omega^+}$ satisfies $\ln(h) \in C^{0, \alpha}(\partial \Omega)$, and $0 \in \partial \Omega^{\pm}$.
\end{definition}

Note that if $\Omega^{\pm} \in \mathcal{D}(n, \alpha, M_0)$ and $Q \in \partial \Omega^{\pm}$, then $\Omega^{\pm} - Q \in \mathcal{D}(n, \alpha, M_0)$.

\subsection{A class of functions and their rescalings}

\begin{definition}\label{align rmk}
Let $\Omega^{\pm} \subset \mathbb{R}^n$ be a pair of complementary two-sided NTA domains with mutual boundary $\partial \Omega^{\pm}$.  For any $Q \in \partial \Omega^{\pm}$ and any Green's functions, $u^{\pm}$, we define the \emph{jump function}
\begin{align}
    v^{Q}(x) := h(Q)u^+(x) - u^-(x).
\end{align}
\end{definition}

The scaling, $h(Q)u^+$, normalizes the Radon-Nikodym derivative of the harmonic measure associated to $h(Q)u^+$ and $u^-$ at $Q \in \partial \Omega^{\pm}$.

\begin{definition} \label{A def}
Let $\mathcal{A}(\Lambda, \alpha, M_0)$ be the set of functions, $v: \RR^n \rightarrow \RR$ which have the following properties:
\begin{enumerate}
\item $v := v^0 = h(0)u^+ - u^-$ where $u^{\pm}$ are Green's functions with poles at infinity associated to a two-sided NTA domain, $\Omega^{\pm} \in \mathcal{D}(n, \alpha, M_0)$ and $h= \frac{d\omega^-}{d\omega^+}$, where $\omega^{\pm}$ are the harmonic measures associated to $u^{\pm}$. 
\item We make the specific choice of $u^{\pm}$ such that $h(0) = 1$.
\item The Almgren frequency function satisfies  $N(0, 1, v) \le \Lambda$.
\end{enumerate}
See Definition \ref{N W def} for the definition of the Almgren frequency function.
\end{definition} 

\begin{rmk}
Observe that for any fixed, $\Omega^{\pm} \in \mathcal{D}(n, \alpha, M_0)$, there is a one-parameter family of associated functions, $v \in \mathcal{A}(\Lambda, \alpha, M_0)$ with $\{v = 0\} = \partial \Omega^{\pm}$.  Indeed, if $v \in \mathcal{A}(\Lambda, \alpha, M_0)$, then $cv \in \mathcal{A}(\Lambda, \alpha, M_0)$ for any $c>0$.  This degree of freedom comes from the non-uniqueness of the Green's function with pole at infinity (see Remark \ref{align rmk}). To avoid degeneracy because of this degree of freedom, in the arguments that follow we must normalize our functions.
\end{rmk}

\begin{definition}\label{harmonic rescaling def}  Let $\Omega^{\pm} \in \mathcal{D}(n, \alpha, M_0)$ and $Q \in \partial \Omega^{\pm}$.  For scales $0< r$, we define the rescaling of the function $v^Q$ to scale $r$ at the point $Q' \in \partial \Omega^{\pm}$ by
\begin{align*}
v^Q_{Q', r}(x) := v^Q(rx +Q')\frac{r^{n-2}}{\omega^-(B_r(Q'))}
\end{align*}
and the corresponding rescaled measure as
\begin{align}
\omega^{\pm}_{Q', r}(E) := \frac{\omega^{\pm}(rE + Q')}{\omega^{\pm}(B_r(Q'))}.
\end{align}
\end{definition}

These rescalings $v^Q_{Q, r}$ were first introduced by Kenig and Toro in \cite{KenigToro06}.  In this paper, we shall employ the following results by Kenig, Toro, Badger, and Engelstein.

\begin{thm}\label{TBE combo}(\cite{KenigToro06}, \cite{Badger11}, \cite{Engelstein16})
For $v^Q_{Q, r}$ and $\omega^{\pm}_{Q, r}$ as in Definition \ref{harmonic rescaling def}, 
\begin{enumerate}
\item Subsequential limits as $r \rightarrow 0$ of the functions $v^Q_{Q, r}$ converge to harmonic polynomials.  Furthermore, the degree of these polynomials is bounded, depending only upon the NTA constant, $M_0$. \cite{KenigToro06}
\item Subsequential limits as $r \rightarrow 0$ of the functions $v^Q_{Q, r}$ converge to \textit{homogeneous} harmonic polynomials.  Furthermore, the degree of homogeneity is unique along blow-ups.  \cite{Badger11}
\item The $v^Q_{Q, r}$ are uniformly locally Lipschitz with Lipschitz constant that only depends upon $M_0.$ \cite{Engelstein16}
\item The $\omega^{\pm}_{Q, r}$ are locally uniformly bounded. \cite{Engelstein16}. 
\end{enumerate}
\end{thm}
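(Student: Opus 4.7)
The plan is to prove the four parts in the logical order (4), (3), (1), (2). First, for part (4), NTA doubling of $\omega^{\pm}$ together with the comparability $\omega^{-} \asymp \omega^{+}$ on compact subsets of $\partial\Omega^{\pm}$ from Remark \ref{r: omega comparable} immediately yield a uniform bound $\omega^{\pm}_{Q,r}(B_R(0)) \le C(R, M_0, \Gamma)$. Next, for part (3), I would combine the Caffarelli--Fabes--Mortola--Salsa comparison $u^{\pm}(X) \asymp u^{\pm}(A_r(Q))$ on corkscrew regions with the Green's-function--harmonic-measure identification $u^{\pm}(A_r(Q)) \asymp \omega^{\pm}(B_r(Q))/r^{n-2}$ to obtain a uniform $L^{\infty}$ bound on $v^Q_{Q,r}$ on any $B_R(0)$; the alignment $h(Q) = 1$ is precisely what puts $h(Q)u^+$ and $u^-$ on the same scale so that no trivial cancellation occurs. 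Interior elliptic estimates on each rescaled NTA domain, together with boundary Lipschitz regularity using the $C^{0,\alpha}$ control of $\ln h$, then promote the $L^\infty$ bound to a uniform local Lipschitz bound with constants depending only on $M_0$.

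For parts (1) and (2), Arzel\`a--Ascoli extracts a subsequential limit $v_\infty$ as $r \to 0$, harmonic in each piece of the limiting (NTA) domain $\Omega^{\pm}_\infty$. The key step is that $v_\infty$ must be \emph{globally} harmonic across $\partial\Omega^{\pm}_\infty$: the normal-derivative jump of $v^Q_{Q,r}$ across $\partial\Omega^{\pm}$ at scale $r$ is controlled by $\sup_{|Q'| \le r}|h(Q') - h(0)| \le \Gamma r^\alpha$, which vanishes in the blow-up thanks to the alignment $h(0) = 1$. Combined with the $L^{\infty}$ bound, $v_\infty$ is therefore a harmonic polynomial, whose degree is bounded by a uniform Almgren frequency estimate $N(r, Q, v^Q_{Q,r}) \le C(M_0)$ (a consequence of monotonicity together with NTA scale-invariant bounds). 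Almgren monotonicity applied to the now-harmonic $v_\infty$ then forces $r \mapsto N(r, 0, v_\infty)$ to be constant, which is equivalent to homogeneity of $v_\infty$; uniqueness of the degree of homogeneity across subsequences follows from the monotonicity of $N(\cdot, Q, v)$ as $r \to 0$ (so the frequency at the origin is a well-defined number, not merely a subsequential quantity).

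The main obstacle throughout is justifying the jump-vanishing in the blow-up limit, i.e.\ promoting piecewise harmonicity to global harmonicity. The function $v = h(0)u^+ - u^-$ genuinely has a discontinuous normal derivative across the mutual boundary, dictated by the Poisson kernels of $\Omega^{\pm}$; only the alignment $h(0) = 1$ paired with the $C^{0,\alpha}$ modulus of continuity of $\ln h$ renders this jump $o(1)$ at scale $r$ in a quantitative way. Rigorously quantifying this cancellation, via the Kenig--Toro boundary blow-up machinery and Poisson kernel comparison on NTA domains, is the technical crux of part (1) and is what makes the harmonic blow-up analysis underlying both (1) and (2) possible in the first place.
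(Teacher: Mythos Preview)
The paper does not prove Theorem~\ref{TBE combo}; it is stated as a compilation of results cited from Kenig--Toro, Badger, and Engelstein, and is used as a black box thereafter (see the sentence immediately preceding it and the proof of Corollary~\ref{TBE combo cor}).  There is therefore no ``paper's own proof'' to compare against.

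Your sketch captures the correct architecture of the cited arguments, with two small imprecisions worth noting.  First, for part~(4) the cross-comparability $\omega^{-}\asymp\omega^{+}$ is unnecessary: each $\omega^{\pm}_{Q,r}$ is normalized by its \emph{own} $\omega^{\pm}(B_r(Q))$, so NTA doubling alone yields $\omega^{\pm}_{Q,r}(B_R(0))\le C(R,M_0)$ with no $\Gamma$-dependence, matching the stated conclusion.  Second, in part~(1) the sentence ``combined with the $L^{\infty}$ bound, $v_{\infty}$ is therefore a harmonic polynomial'' is not quite right: a locally bounded entire harmonic function need not be a polynomial.  What you actually need (and implicitly have, via the frequency bound $N\le C(M_0)$) is \emph{polynomial growth at infinity}, which then forces $v_{\infty}$ to be a polynomial by Liouville.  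For part~(2), note that constancy of $N(r,0,v_{\infty})$ is obtained by sandwiching it between the values of $N(\cdot,Q,v)$ at nearby scales, so the relevant monotonicity is the \emph{almost}-monotonicity of $N$ for $v$ itself (Engelstein's result, restated in this paper as Lemma~\ref{E almost monotonicity lem}), not merely monotonicity for the already-harmonic $v_{\infty}$; you have the right conclusion but should attribute it to the correct object.

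Modulo these points, the essential mechanism you identify --- the normal-derivative jump of $v^Q_{Q,r}$ being controlled by $\sup_{B_r(Q)}|h-h(Q)|\lesssim r^{\alpha}\to 0$, giving global harmonicity of the blow-up, followed by frequency bounds for the degree and (almost-)monotonicity for homogeneity and uniqueness of degree --- is exactly what the cited references do.
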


In addition to the $v_{Q',r}^Q$ rescalings, we shall also use a different kind of rescaling.
  
\begin{definition} (\cite{CheegerNaberValtorta15})\label{L2 rescaling def}
Let $f:B_1(0) \rightarrow \mathbb{R}$ be a function in  $C(\RR^n)$.  We define the rescaled function, $T_{x, r}f$ of $f$ at a point $x \in B_{1-r}(0)$ at scale $0<r<1$ by
\begin{align}
    T_{x, r}f(y) := \frac{f(x +ry) - f(x)}{\left(\fint_{\partial B_1(0)} (f(x + ry) -f(x))^2d\sigma(y)\right)^{1/2}}.
\end{align} 
In the case that the denominator is zero, we define $T_{x, r}f = \infty$. We denote the limit as $r \rightarrow 0$ by $$T_xf(y) := \lim_{r \rightarrow 0}T_{x, r}f(y).$$ 
 \end{definition}

\subsection{Quantitative symmetry.}

The geometry we wish to capture with the blow-ups $T_xf$ is encoded in their translational symmetries.  
 \begin{definition}\label{symmetric def}
Let $f: \RR^n \rightarrow \mathbb{R}$ be a continuous function. We say $f$ is $0$-\emph{symmetric} if \begin{align}
f(x):= cP^+(x) - P^-(x)
\end{align}
for some $c >0$, where $P^{\pm}$ are the positive and negative parts of a homogeneous harmonic polynomial $P$.  We will say that $f$ is $k$-\emph{symmetric} if $f$ is $0$-symmetric and there exists a $k$-dimensional subspace, V, such that $f(x+y) =f(x)$ for all $x \in \RR^n$ and all $y \in V$.
\end{definition}
 
The constant, $c>0$, is there to allow for the function to ``hinge" along its zero set.  We must allow this kind of ``hinging" to accommodate for the ``non-alignment" issue in the blow-ups at $Q \in \partial \Omega^{\pm} \setminus \{0\}$.  See Remark \ref{hinged blow-ups}. 

We now define a quantitative version of symmetry.

\begin{definition} \label{quant symmetric def}
For any $f \in C(\RR^n)$, $f$ will be called $(k, \epsilon, r, p)$-\emph{symmetric} if there exists a $k$-symmetric function, $P,$ such that, 
 \begin{itemize}
 \item[1.] $\fint_{\partial B_1(0)} |P|^2 dV= 1$
 \item[2.] $\fint_{B_1(0)}|T_{p, r}f - P|^2dV < \epsilon.$
 \end{itemize}
Sometimes, we shall refer to a function $f$ as being $(k, \epsilon)$-symmetric in the ball $B_r(p)$ to mean $f$ is $(k, \epsilon, r, p)$-symmetric.
 \end{definition}
 
This quantitative control allows us to define a quantitative stratification following \cite{CheegerNaber13}.
 
 \begin{definition}(Quantitative Singular Strata)\label{quant strat def}
 Let $v \in \mathcal{A}(\Lambda, \alpha, M_0)$.  We denote the $(k, \epsilon, r)$-\emph{singular stratum} of $v$ by $\mathcal{S}^k_{\epsilon, r}(v)$, and we define it by 
\begin{equation}
\mathcal{S}^k_{\epsilon, r}(v) := \{x \in \partial \Omega^{\pm} : v \text{  is not  } (k+1, \epsilon, s, x) \text{-symmetric for all  } s \ge r\}.
\end{equation}
We shall also use the notation $\mathcal{S}^k_{\epsilon}(v)$ for $\mathcal{S}^k_{\epsilon, 0}(v).$ It is immediate from the definitions that $\mathcal{S}^k_{\epsilon, r}(v) \subset \mathcal{S}^{k'}_{\epsilon', r'}(v)$ if $k \le k', \epsilon' \le \epsilon, r \le r'$.  

We can recover the \emph{qualitative} stratification
$$\mathcal{S}^k(v) := \{x \in \partial \Omega^{\pm} : T_xv \text{  is not  } (k+1)\text{-symmetric}\} = \cup_{\eta} \cap_{r} \mathcal{S}^k_{\eta, r}(v). $$
The set $\mathcal{S}^k(v)$ is the $k^{th}$ stratum of $\mathcal{S}^{n-2}(v)$.  Furthermore, if $x \in \mathcal{S}^k(v),$ then there exists an $0 < \epsilon$ such that $x \in \mathcal{S}^k_{\epsilon}(v).$  
\end{definition}

\begin{rmk}
Note that the singular set and its strata are all stable under scalar multiplication.  That is
\begin{align*}
    \mathcal{S}^k(v) & = \mathcal{S}^k(cv)\\
    \mathcal{S}^k_{\epsilon, r}(v) & = \mathcal{S}^k_{\epsilon, r}(cv)
\end{align*}
for all $c \not = 0.$ In particular, for all $v \in \mathcal{A}(\Lambda, \alpha, M_0)$, $\mathcal{S}^k_{\epsilon, r}(v) = \mathcal{S}^k_{\epsilon, r}(v^Q)$ for all $Q \in \partial \Omega^{\pm}$.
\end{rmk}

Previous results on the singular set are summed up in the following theorem.

\begin{thm}\label{flat implies smooth}(\cite{BadgerEngelsteinToro17} \cite{BadgerEngelsteinToro20}, \cite{Engelstein16})
For  $v \in \mathcal{A}(\Lambda, \alpha, M_0)$, the following hold: 
\begin{enumerate}
\item $\left(\mathcal{S}^{n-2}(v)\setminus \mathcal{S}^{n-3}(v) \right) \cap \partial \Omega^{\pm} = \emptyset$, (\cite{BadgerEngelsteinToro20}, Remark 7.2).
\item There exists an $\epsilon >0$ such that 
$\emph{sing}(\partial \Omega^{\pm}) = \mathcal{S}^{n-3}(v) \cap \partial \Omega^{\pm} \subset \mathcal{S}^{n-2}_{\epsilon}(v)$, (\cite{Engelstein16}, Theorem 1.1).
\item  $\overline{\dim_{\mathcal{M}}}(\emph{sing}(\partial \Omega^{\pm})) \le n-3$ (\cite{BadgerEngelsteinToro17}, Theorem 7.5). 
\end{enumerate}
\end{thm}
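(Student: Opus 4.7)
Each of the three assertions is a restatement of a published result, so my plan is to verify the three translations into the language of Definition \ref{quant strat def} rather than prove anything from scratch.

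For (1), the starting point is Theorem \ref{TBE combo}: every boundary blow-up $T_Q v$ is, up to the hinge constant $c > 0$ in Definition \ref{symmetric def}, a homogeneous harmonic polynomial $P$. Using the topological fact that $\RR^n \setminus \partial \Omega^\pm$ has exactly two connected components (since $\Omega^\pm$ are complementary NTA domains), Remark 7.2 of \cite{BadgerEngelsteinToro18} shows that the spine of $P$ cannot have dimension exactly $n - 2$: such a $P$ would be (up to rotation) a nontrivial two-variable harmonic polynomial extended trivially, and its zero set would disconnect $\RR^n$ into at least $2\deg P \ge 4$ components, which is incompatible with $Q$ lying on the mutual boundary of two complementary NTA domains. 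Thus at any boundary point the spine of $P$ has dimension either $n - 1$ (if $\deg P = 1$) or at most $n - 3$ (if $\deg P \ge 2$); this translates to (1) once one identifies spine dimension of $P$ with maximal $k$-symmetry of the hinged function $cP^+ - P^-$.

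For (2), the equality $sing(\partial \Omega^\pm) = \mathcal{S}^{n-3}(v) \cap \partial \Omega^\pm$ is then immediate from the classification just established: $Q$ is singular iff $\deg P \ge 2$ iff $T_Q v$ has spine of dimension $\le n - 3$ iff $Q \in \mathcal{S}^{n-3}(v)$. The inclusion $sing(\partial \Omega^\pm) \subset \mathcal{S}^{n-2}_\epsilon(v)$ is the contrapositive of Theorem 1.1 of \cite{Engelstein16}: that $\epsilon$-regularity result produces a threshold $\epsilon > 0$ such that whenever $v$ is $(n-1, \epsilon, r, Q)$-symmetric at \emph{any single scale} $r$ for some $Q \in \partial \Omega^\pm$, the boundary is locally $C^{1, \alpha}$ at $Q$, in particular $Q \notin sing(\partial \Omega^\pm)$. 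Negating this at every scale is exactly what membership in $\mathcal{S}^{n-2}_\epsilon(v)$ means, which gives the inclusion. For (3), the upper Minkowski dimension bound is Theorem 7.5 of \cite{BadgerEngelsteinToro17} applied verbatim: $\mathcal{A}(\Lambda, \alpha, M_0)$ is built over the domain class $\mathcal{D}(n, \alpha, M_0)$ they consider, and their bound is obtained by a Federer dimension-reduction argument on the polynomial blow-ups of $v$.

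The main conceptual step, and the one I expect to require the most care in writing out, is the topological input underlying (1): the hinge constant $c > 0$ in Definition \ref{symmetric def} must be tracked when passing from translational symmetries of $cP^+ - P^-$ to symmetries of the underlying polynomial $P$, and it is precisely the complementary two-sided NTA structure that furnishes the ``exactly two components'' input ruling out spines of dimension $n-2$. Everything else is either bookkeeping or a direct citation.
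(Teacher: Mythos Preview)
Your proposal is correct and matches the paper's approach: the paper itself provides no proof of this theorem, treating it purely as a citation of the three external results you identify, so your plan to verify the translations into the quantitative-stratification language is exactly what is needed. Your explanations of how each cited result yields the stated conclusion are accurate.
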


\subsection{Main Results and Outline of the Proof}\label{S:main theorem}

In this paper, we prove volume bounds on tubular neighborhoods around the $S^k_{\epsilon, r}(v)$.  We are able to show the following estimates.

\begin{thm}\label{main theorem}
Let $v \in \mathcal{A}(\Lambda, \alpha, M_0)$ with $||\ln(h)||_{\alpha} \le \Gamma.$  For every $0 < \epsilon$ and $0 \le k \le n-2$ there is an $0<r_0(n, \Lambda, \alpha, M_0, \Gamma, \epsilon)$ such that for all $0< r <r_0$, and any $r \le R \le 1$ 
\begin{equation}
\text{Vol}\left(B_{R}(B_{1/4}(0) \cap \mathcal{S}^k_{\epsilon, r}(v))\right) \le C(n, \Lambda, \alpha, M_0, \Gamma, \epsilon) R^{n-k}.
\end{equation}
\end{thm}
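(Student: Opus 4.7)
The plan is to adapt the Naber--Valtorta covering scheme from \cite{NaberValtorta17-1} to the Almgren frequency $N(r,p,v)$ for $v \in \mathcal{A}(\Lambda,\alpha,M_0)$, ultimately producing, for each scale $r\in[r_0,1]$ and each $r\le \rho \le 1$, a covering of $B_{1/4}(0)\cap \mathcal{S}^k_{\epsilon,r}(v)$ by at most $C\rho^{-k}$ balls of radius $\rho$. Once such a covering exists, the tubular volume estimate $\mathrm{Vol}(B_R(\mathcal{S}^k_{\epsilon,r}(v)\cap B_{1/4}(0)))\le C R^{n-k}$ follows by applying the covering at scale $\rho=R$ and using that each of the enlarged balls $B_{2R}(x_i)$ has volume $\sim R^n$. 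The theorem is therefore reduced to a discrete packing estimate for the stratum.

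The two main analytic inputs that make the Naber--Valtorta machinery go through in our setting must be assembled first. The first is an almost--monotonicity statement for $N(\cdot,p,v)$ along with uniform upper bounds (the promised content of Sections \ref{S:monotonicity}--\ref{S:N bound}): since $v$ is not harmonic across $\partial\Omega^{\pm}$ we cannot use the classical monotonicity, but the combination of Corollary \ref{TBE combo cor}, Remark \ref{r: omega comparable}, and the symmetry in Remark \ref{+/- sym} should give $N(r,p,v) \le \Lambda'$ and monotonicity up to a multiplicative error that vanishes as $r\to 0$. The second input is a \emph{quantitative rigidity} / \emph{cone-splitting} lemma (Section \ref{S: quant rigidity}): if $N(r_2,p,v)-N(r_1,p,v)<\eta$ on a scale ratio $r_2/r_1$ controlled by $\eta$, then $v$ is $(0,\epsilon,r_2,p)$-symmetric; and if $v$ is $(0,\delta)$-symmetric at $k+1$ points of $B_\rho(x)$ which are in $\delta\rho$-general position, then $v$ is $(k,\epsilon,\rho,x)$-symmetric. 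These should follow from a contradiction/compactness argument using Theorem \ref{TBE combo}, since the only $0$-symmetric blow-ups are positive-negative decompositions of homogeneous harmonic polynomials.

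With these in hand, the covering is constructed in the style of \cite{NaberValtorta17-1}. Fix $\epsilon>0$ and $k\le n-2$. For each point $x\in \mathcal{S}^k_{\epsilon,r}(v)\cap B_{1/4}(0)$, the frequency $N(\cdot,x,v)$ can drop by at most $\Lambda'$ across the scales $[r,1]$; by the dichotomy above, apart from finitely many ``bad scales'' (where the frequency drops definitely) the function is $(0,\delta)$-symmetric. Cone-splitting then forces the set of $(0,\delta)$-symmetric points at scale $\rho$ not to span more than $k+1$ effective directions, hence to concentrate near a $k$-plane. The key computation is the $L^2$ best-$k$-plane estimate: the $\beta^k_{\mu}(p,\rho)^2$ numbers for $\mu = \sum \rho_i^k \delta_{x_i}$ in any ball $B_\rho(p)$ can be bounded by $C(N(8\rho,y,v)-N(\rho/8,y,v))$ integrated against $d\mu$, via an algebraic identity relating $(y-V)^\perp$ to the derivative of $N$ in non-symmetric directions. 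Summing this across scales and invoking the a priori frequency bound produces the hypothesis of the Discrete Reifenberg Theorem \ref{discrete reif}, which yields $\mu(B_\rho(p))\le C\rho^k$ for every ball. This is the desired packing estimate, and an inductive Vitali-type construction stopping at scale $r$ then produces the covering by $\le C\rho^{-k}$ balls of radius $\rho$, completing the proof.

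The hard part, and the place where our argument genuinely diverges from \cite{NaberValtorta17-1}, is making the $L^2$ best-plane / frequency-pinching identity work for the non-harmonic function $v = h(0)u^+ - u^-$. Naber--Valtorta rely on the drift equation satisfied by harmonic functions; here $v$ is harmonic only off $\partial\Omega^{\pm}$ and $v$ need not even be $C^1$ across the boundary (\emph{cf.} Remark \ref{hinged blow-ups} and Example \ref{non-aligned example}). Two devices will be used: work with the mollification $v_\epsilon=v\star\phi_\epsilon$ and control the error via Corollary \ref{TBE combo cor}; and exploit Remark \ref{+/- sym} so that boundary terms involving $\omega^{\pm}$ can be estimated symmetrically on both sides. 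A secondary obstacle is that the ``non-alignment'' at $Q\in\partial\Omega^{\pm}\setminus\{0\}$ prevents blow-ups of $v = v^0$ from being harmonic polynomials, but the $0$-symmetric class of Definition \ref{symmetric def} was designed precisely to absorb the hinging constant $c>0$, so cone-splitting still produces the correct $k$-planar concentration and the Reifenberg step proceeds unchanged.
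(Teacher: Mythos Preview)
Your outline is essentially the paper's program, and you have correctly identified both the overall Naber--Valtorta architecture (almost-monotonicity + uniform frequency bound $\Rightarrow$ quantitative rigidity and cone-splitting $\Rightarrow$ $\beta$-number/frequency-drop estimate $\Rightarrow$ Discrete Reifenberg $\Rightarrow$ good/bad tree covering) and the fact that the $\beta$-number step is where the genuine work lies. Your remarks about mollification, the $\pm$ symmetry of Remark \ref{+/- sym}, and the hinging absorbed by Definition \ref{symmetric def} are all accurate and are used exactly as you describe.

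There is, however, one gap in your plan for the $\beta$-number estimate. You state the desired bound as $\beta_\mu^k(p,\rho)^2 \le C\rho^{-k}\int \big(N(8\rho,y,v)-N(\rho/8,y,v)\big)\,d\mu(y)$, and propose to obtain it for the non-harmonic $v$ via mollification and symmetry alone. In the paper this estimate (Lemma \ref{beta bound lem}) carries an unavoidable additional error term $C\,\mu(B_\rho(p))\rho^{-k}\cdot \rho^{m}$, and it is precisely this $\rho^m$ term---not the non-harmonicity per se---that encodes the difficulty of working at points $y$ \emph{off} $\partial\Omega^{\pm}$. The mechanism that produces the exponent $m>0$ is not mollification; it is Lemma \ref{wiggle room}: if $v$ is $(0,\delta)$-symmetric but \emph{not} $(k+1,\epsilon)$-symmetric in $B_{8r}(p)$, then $N(\rho,p,v)>1+m$ for all $\rho\in[r,8r]$. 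This forces the nearby $0$-symmetric model to be a homogeneous polynomial of degree $\ge 2$, which via Lemma \ref{H growth bound} gives the super-linear growth $H(\rho,p,v)\lesssim \rho^{n-1+2(1+m/2)}$ (Lemma \ref{H non-linear growth}) and hence the pointwise bound $|T_{0,1}v(y)-T_{0,1}v(p)|\lesssim \rho^{1+m/2}$ (Lemma \ref{2nd order upper bound, approx}). This extra decay is what allows you to swap the base point from $p$ to $y\in B_r(p)$ and the constant $\lambda(p,7r,v)$ to $\lambda(y,|z-y|,v)$ in the Cauchy--Schwarz integrand at cost $O(r^m)$ (Lemma \ref{fudge III}); without it, those swapping errors would be $O(1)$ and the Discrete Reifenberg summation in Lemma \ref{packing lem} would diverge. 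Your devices (mollification, $\pm$ symmetry) handle the almost-monotonicity and compactness sections, but for the $\beta$-estimate you need to add this higher-order-homogeneity observation explicitly.
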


We have the following immediate corollary.

\begin{cor}\label{cor 3.2}
Let $v \in \mathcal{A}(\Lambda, \alpha, M_0)$ and $0 \le k \le n-2$.  For every $0 < \epsilon$
\begin{align}
\overline{\dim}_{\mathcal{M}}(\mathcal{S}^{k}_{\epsilon}(v)) \le k,
\end{align}
and there exists a constant such that 
\begin{equation}
\mathcal{M}^{*, k}( \mathcal{S}^{k}_{\epsilon}(v) \cap B_{1/4}(0)) \le C(n, \Lambda, \alpha, M_0, \Gamma, \epsilon).
\end{equation}
\end{cor}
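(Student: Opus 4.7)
The plan is to deduce Corollary 3.2 directly from Theorem 3.1 using only the monotonicity of the quantitative strata in $r$ and the definition of upper Minkowski content, so the argument is essentially bookkeeping. The key observation is that
$$
\mathcal{S}^k_\epsilon(v) \;=\; \mathcal{S}^k_{\epsilon,0}(v) \;\subset\; \mathcal{S}^k_{\epsilon,r}(v) \qquad \text{for every } r>0,
$$
which is immediate from Definition \ref{quant strat def} since any function which fails to be $(k+1,\epsilon,s,x)$-symmetric for \emph{all} $s\ge 0$ in particular fails it for all $s\ge r$. Taking tubular neighborhoods preserves inclusions, so Theorem \ref{main theorem} applied with the choice $r=R$ yields, for every $0<R<r_0(n,\Lambda,\alpha,M_0,\Gamma,\epsilon)$,
$$
Vol\bigl(B_R(B_{1/4}(0)\cap \mathcal{S}^k_\epsilon(v))\bigr)
\;\le\; Vol\bigl(B_R(B_{1/4}(0)\cap \mathcal{S}^k_{\epsilon,R}(v))\bigr)
\;\le\; C\,R^{n-k}.
$$

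First I would feed this volume bound into the definition of upper Minkowski content (Equation \ref{upper minkowski content}). For the content statement, with $s=k$ one obtains
$$
\mathcal{M}^{*,k}\bigl(\mathcal{S}^k_\epsilon(v)\cap B_{1/4}(0)\bigr)
\;=\; \limsup_{R\to 0}\frac{Vol\bigl(B_R(\mathcal{S}^k_\epsilon(v)\cap B_{1/4}(0))\bigr)}{(2R)^{n-k}}
\;\le\; \frac{C}{2^{n-k}},
$$
giving the second inequality of the corollary (with the same constant $C=C(n,\Lambda,\alpha,M_0,\Gamma,\epsilon)$ up to the factor $2^{k-n}$).

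Next I would prove the dimension bound. For any $s>k$ the identical estimate gives
$$
\mathcal{M}^{*,s}\bigl(\mathcal{S}^k_\epsilon(v)\cap B_{1/4}(0)\bigr)
\;\le\; \limsup_{R\to 0}\frac{C\,R^{n-k}}{(2R)^{n-s}}
\;=\; C\,2^{s-n}\,\limsup_{R\to 0} R^{s-k}
\;=\; 0,
$$
so $\overline{\dim}_\mathcal{M}(\mathcal{S}^k_\epsilon(v)\cap B_{1/4}(0))\le k$ by the definition of upper Minkowski dimension. To upgrade this to a bound on the unbounded set $\mathcal{S}^k_\epsilon(v)$ itself, I would invoke the translation invariance of the domain class noted in Remark following Definition \ref{domain class def}: for each $Q\in\RR^n$ the translate $v(\cdot+Q)$ still belongs to $\mathcal{A}(\Lambda,\alpha,M_0)$ with the same parameters (after renormalizing so $h(0)=1$, which does not affect the strata since $\mathcal{S}^k_{\epsilon,r}$ is invariant under positive scaling of $v$). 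Applying the local bound at each center of a countable cover $\{B_{1/4}(Q_j)\}_j$ of $\RR^n$ and using the countable stability of upper Minkowski dimension on a fixed bounded region yields $\overline{\dim}_\mathcal{M}(\mathcal{S}^k_\epsilon(v))\le k$ locally, which is all that is meaningful here.

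There is no real obstacle; the work has been done in Theorem \ref{main theorem}. The only mild care is tracking that the hypothesis $r\le R\le 1$ of the main theorem is satisfied (one simply picks $r=R$), and that one is free to push $r$ below $r_0$ because $\mathcal{S}^k_\epsilon(v)$ is contained in $\mathcal{S}^k_{\epsilon,r}(v)$ for \emph{every} $r>0$. No additional estimate on $v$, on harmonic measure, or on the boundary geometry is required beyond Theorem \ref{main theorem} itself.
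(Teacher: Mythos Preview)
Your proposal is correct and takes essentially the same approach as the paper, which simply remarks that the corollary follows from Theorem \ref{main theorem} and the definition of upper Minkowski content. One small terminological slip: upper Minkowski dimension is only \emph{finitely} stable, not countably stable, but since any bounded region is covered by finitely many of your translated balls $B_{1/4}(Q_j)$, finite stability is all you need and your local conclusion stands.
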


Due to an $\epsilon$-regularity result due to \cite{Engelstein16}, we are able to strengthen the conclusion of Theorem \ref{main theorem} when we consider the full singular set.  

\begin{cor}\label{cor 3.3}
Let $v \in \mathcal{A}(\Lambda, \alpha, M_0)$ with $||\ln(h)||_{\alpha} \le \Gamma$.  Recall that $\emph{sing}(\partial \Omega^{\pm}) = \mathcal{S}^{n-3} \cap \partial \Omega^{\pm}$.  There exists an $0< \epsilon= \epsilon (M_0, \Gamma, \alpha)$ such that $\emph{sing}(\partial \Omega^{\pm}) \subset \mathcal{S}^{n-3}_{\epsilon}$ (see Lemma \ref{epsilon regularity}).  Thus, there is a constant, $C = C(n, \Lambda, \alpha, M_0, \Gamma) < \infty$ such that
\begin{equation}
\mathcal{M}^{*, n-3}( \emph{sing}(\partial \Omega^{\pm}) \cap B_{1/4}(0)) \le C(n, \Lambda, \alpha, M_0, \Gamma).
\end{equation}
\end{cor}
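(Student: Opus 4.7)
The plan is to combine Theorem~\ref{main theorem} at stratum level $k=n-3$ with the $\epsilon$-regularity inclusion from Lemma~\ref{epsilon regularity}, which is already cited in the statement of the corollary. The result is essentially a diagonal specialization of the main theorem, so the proof amounts to unwinding definitions and choosing $r=R$; no new estimates are required.

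First I would fix $\epsilon=\epsilon(M_0,\Gamma,\alpha)>0$ as produced by Lemma~\ref{epsilon regularity}, so that $sing(\partial\Omega^{\pm})\subset \mathcal{S}^{n-3}_{\epsilon}(v)$. From Definition~\ref{quant strat def}, one has $\mathcal{S}^{n-3}_{\epsilon}(v)=\mathcal{S}^{n-3}_{\epsilon,0}(v)\subset \mathcal{S}^{n-3}_{\epsilon,r}(v)$ for every $r>0$. Intersecting with $B_{1/4}(0)$ and taking the $R$-tubular neighborhood (which is monotone under set inclusion) gives
$$
B_R\bigl(sing(\partial\Omega^{\pm})\cap B_{1/4}(0)\bigr)\subset B_R\bigl(\mathcal{S}^{n-3}_{\epsilon,r}(v)\cap B_{1/4}(0)\bigr)
$$
for every $r>0$ and every $R>0$.

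Second, I would apply Theorem~\ref{main theorem} with $k=n-3$ (note $k\le n-2$) and the $\epsilon$ just fixed. This produces $r_0=r_0(n,\Lambda,\alpha,M_0,\Gamma)>0$ and $C=C(n,\Lambda,\alpha,M_0,\Gamma)<\infty$ such that for all $0<r<r_0$ and $r\le R\le 1$,
$$
Vol\bigl(B_R(B_{1/4}(0)\cap \mathcal{S}^{n-3}_{\epsilon,r}(v))\bigr)\le C\,R^{3}.
$$
For any $0<R<r_0$ the diagonal choice $r=R$ is admissible in this estimate, and combining it with the inclusion from the first step yields
$$
Vol\bigl(B_R(sing(\partial\Omega^{\pm})\cap B_{1/4}(0))\bigr)\le C\,R^{3} \qquad \text{for all } 0<R<r_0.
$$

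Finally, I would unfold the definition \eqref{upper minkowski content} of upper Minkowski $(n-3)$-content:
$$
\mathcal{M}^{*,n-3}(sing(\partial\Omega^{\pm})\cap B_{1/4}(0))=\limsup_{R\to 0^+}\frac{Vol(B_R(sing(\partial\Omega^{\pm})\cap B_{1/4}(0)))}{(2R)^{3}}\le \frac{C}{8},
$$
which gives the claimed bound, after absorbing the constant. The only substantive point to verify is that the diagonal specialization $r=R$ falls within the admissible range $r\le R\le 1$ of Theorem~\ref{main theorem}, which is immediate. All of the genuine work is concentrated in Theorem~\ref{main theorem} and in Lemma~\ref{epsilon regularity}; at the level of the corollary itself there is no real obstacle.
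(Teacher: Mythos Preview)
Your proof is correct and follows exactly the approach of the paper, which simply states that the corollary follows immediately from Lemma~\ref{epsilon regularity} and Theorem~\ref{main theorem}; you have just unpacked the details of that sentence.
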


\begin{proof}
This follows immediately from Lemma \ref{epsilon regularity} and Theorem \ref{main theorem}.
\end{proof}

\subsection{Outline of the Proof of Theorem 3.1}

In order to prove a theorem of this kind, we must build a cover of $\mathcal{S}^k_{\epsilon, r}(v)$, and we must count how many balls we use.  Therefore, two things are critical, getting geometric information about $\mathcal{S}^k_{\epsilon, r}(v)$ and keeping track of the how the balls pack.  

The overall strategy of proof is very similar to that of \cite{EdelenEngelstein17, deLellisMarcheseSpadaroValtorta16}.  However, there are several major differences.  First, the functions we consider, $v \in \mathcal{A}(\Lambda, \alpha, M_0)$, are not harmonic functions or minimizers of an energy.   Sections \ref{S:compactness} -- \ref{S: quant rigidity} are devoted to showing that the relevant analogs of harmonic results (e.g. compactness, almost-monotonicity of the Almgren frequency, local uniform boundedness of the Almgren frequency, quantitative rigidity for the Almgren frequency, cone-splitting, etc.) hold for $v \in \mathcal{A}(\Lambda, \alpha, M_0)$. In particular, we prove an estimate on the non-degeneracy of the almost-monotonicity for Almgren frequency in Lemma \ref{N non-degenerate almost monotonicity}. Local geometric control on $\mathcal{S}^{k}_{\epsilon}(v)$ is obtained in Section \ref{S:dichotomy}. 

However, geometric control is not enough to obtain Theorem \ref{main theorem}. To obtain finite upper Minkowski content bounds we need the Discrete Reifenberg Theorem from \cite{NaberValtorta17-1}(Theorem \ref{discrete reif}). This requires that we prove a ``frequency pinching" result (Lemma \ref{beta bound lem}) in which we connect the drop in the Almgren frequency over small scales with the $\beta$-numbers.  The main challenge is to connect the lower bound on the derivative of the Almgren frequency (Lemma \ref{N non-degenerate almost monotonicity}) and employ the techniques of \cite{deLellisMarcheseSpadaroValtorta16} to obtain the necessary estimates on $N(Q, r, v) - N(Q', r, v)$ (see Section \ref{S:spatial derivatives}).

In Section \ref{S:Packing}, obtain the necessary packing estimates, following the framework of \cite{NaberValtorta17-1} to accommodate the estimates of Section \ref{S: beta numbers}.  Sections \ref{S:trees} and \ref{S:covering}, construct the covering which proves the theorem according to the program laid out by \cite{NaberValtorta17-1}.  They are included for completeness.


\section{Compactness}\label{S:compactness}

The main goal of this section is to show that $\mathcal{A}(\Lambda, \alpha, M_0)$ enjoys sufficient compactness to allow for limit-compactness arguments.  Namely, we wish to establish that for any sequence $v_i \in \mathcal{A}(\Lambda, \alpha, M_0)$, we can extract a subsequence which converges to a function $v_{\infty}$ and that $N(p, r, v_i) \rightarrow N(p, r, v_{\infty})$ (Corollary \ref{N W convergence cor}).  This requires strong convergence in $W^{1, 2}_{loc}(\RR^n)$ (see Lemma \ref{strong compactness I} and Lemma \ref{compactness-ish I}).
 
On a technical level, we must extend the compactness implied by Theorem \ref{TBE combo} for $v^Q_{Q, r}$ to $v^Q_{Q', r}$ and $T_{Q', r}v$. Throughout, we shall make essential use of ``standard NTA results" such as the doubling of harmonic measure and various comparability results, all of which may be found in \cite{JerisonKenig82}.

\begin{rmk}\label{r: omega comparable}
Recall that for $E \subset \partial \Omega^{\pm}$
$$\omega^+(E) = \int \chi_{E}d\omega^+$$ and $$\omega^-(E) = \int \chi_{E}hd\omega^+.$$  Furthermore, if  $\ln(h) \in C^{0, \alpha}$ with $||\ln(h)||_{\alpha} \le \Gamma$, then for all $Q, Q' \in \partial \Omega^{\pm}$
\begin{equation}\label{h comparability}
 e^{-\Gamma|Q-Q'|^{\alpha}}h(Q') \le h(Q) \le e^{\Gamma|Q - Q'|^{\alpha}}h(Q').
 \end{equation}

Using Equation (\ref{h comparability}) in the above integral equations implies that in any compact set $K$, if $v \in \mathcal{A}(\Lambda, \alpha, M_0)$ with $||\ln(h)||_{\alpha} \le \Gamma$, there is a constant, $C(K,\Gamma, \alpha)>1$ such that for any $E \subset K \cap \partial \Omega^{\pm}$
 \begin{equation*}
C^{-1} \le \frac{\omega^-(E)}{\omega^{+}(E)} \le C.
\end{equation*}
\end{rmk}

\begin{rmk}\label{hinged blow-ups}
By Theorem \ref{TBE combo}, we know that subsequential limits as $r \rightarrow 0$ of the functions $v^Q_{Q, r}$ converge to homogeneous harmonic polynomials.  However, for $Q, Q' \in \partial \Omega^{\pm}$ and $Q \not=Q',$ it is not true in general that $v^Q_{Q', r}$ converges to a homogeneous harmonic polynomial.  As $r \rightarrow 0$, the function $v^Q_{Q', r}$ will converge to a $0$-symmetric function (see Definition \ref{symmetric def}) where $c = h(Q)$. 
\end{rmk}

\begin{definition}
We shall abuse the notation of Definition \ref{L2 rescaling def} $T_{Q, r }$ to denote translated and scaled versions of various objects.  For example, for sets this is the usual push-forward  
\begin{align*}
T_{Q, r}\Omega^{\pm} & := \frac{\Omega^{\pm} - Q}{r}\\
T_{Q, r} \partial \Omega^{\pm} & := \frac{\partial \Omega^{\pm} - Q}{r}.
\end{align*}
However, for the measures, $\omega^{\pm}$, $T_{Q, r}\omega^{\pm}$ will denote the harmonic measures associated to the positive and negative parts of $T_{Q, r}v$.  The corkscrew points, $A^{\pm}_R(Q)$ will always denote the corkscrew point associated to $Q$ at scale $R$ in the domain $\Omega^{\pm}$.  We shall use $T_{Q, r}A^{\pm}_{r'}(Q')$ will denote the corkscrew point associated to associated to $T_{Q, r}Q' = \frac{Q' - Q}{r} \in T_{Q, r} \partial \Omega^{\pm}$ at the scale $\frac{r'}{r}.$
\end{definition} 

\begin{lem}\label{Lipschitz}
Let $v \in \mathcal{A}(\Lambda, \alpha, M_0)$ with $||\ln(h) ||_{\alpha} \le \Gamma$.  Then for all $Q \in \partial \Omega^{\pm} \cap B_2(0),$ and all radii $0<r\le 2$, the function $T_{Q, r}v$ is locally Lipschitz with uniform constants depending only upon $M_0, \Gamma, \alpha$.
\end{lem}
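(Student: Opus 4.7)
The plan is to reduce the statement to the uniform Lipschitz bound for the harmonic-type rescaling $v^0_{p,r}$ provided by Corollary \ref{TBE combo cor}(1). The starting observation is that $Q \in \partial \Omega^{\pm}$ forces $v(Q) = 0$, so Definition \ref{L2 rescaling def} becomes
\[
T_{Q,r}v(y) = \frac{v(Q+ry)}{\bigl(\fint_{\partial B_1(0)} v(Q+rz)^2 \, d\sigma(z) \bigr)^{1/2}}.
\]
Multiplying numerator and denominator by the (positive) constant $r^{n-2}/\omega^-(B_r(Q))$ identifies this ratio with
\[
T_{Q,r}v(y) = \frac{v^0_{Q,r}(y)}{\bigl(\fint_{\partial B_1(0)} \bigl(v^0_{Q,r}(z)\bigr)^2 \, d\sigma(z) \bigr)^{1/2}}.
\]
Because $|Q - 0| \le 2$ and $0 < r \le 2$, Corollary \ref{TBE combo cor}(1) already supplies a uniform Lipschitz constant $L = L(M_0,\Gamma,\alpha)$ for $v^0_{Q,r}$ on $\overline{B_1(0)}$. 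Hence the lemma reduces entirely to a lower bound, uniform in $(Q,r)$, on the $L^2$ normalization in the denominator.

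To bound that denominator from below, I would invoke the standard Bourgain / Caffarelli--Fabes--Mortola--Salsa-type estimate on NTA domains, which gives $u^-(A^-_r(Q)) \ge c_1(M_0) \, \omega^-(B_r(Q))/r^{n-2}$. Since $u^-$ is harmonic and non-negative on $\Omega^-$ and vanishes continuously on $\partial \Omega^- \cap B_r(Q)$, the maximum principle applied on $B_r(Q) \cap \Omega^-$ forces $\sup_{\partial B_r(Q) \cap \Omega^-} u^- \ge u^-(A^-_r(Q))$. After the rescaling this yields a point $y_0 \in \partial B_1(0)$ with $|v^0_{Q,r}(y_0)| \ge c_1(M_0) > 0$.

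With such a $y_0$ in hand, the uniform Lipschitz bound propagates the pointwise estimate to a neighborhood: $|v^0_{Q,r}| \ge c_1/2$ on $\partial B_1(0) \cap B_\rho(y_0)$ with $\rho = c_1/(2L)$, a spherical cap whose $(n-1)$-dimensional measure is bounded below purely in terms of $M_0,\Gamma,\alpha$. Integrating,
\[
\fint_{\partial B_1(0)} \bigl(v^0_{Q,r}\bigr)^2 \, d\sigma \ge c_2(M_0,\Gamma,\alpha) > 0.
\]
Dividing the Lipschitz constant $L$ of $v^0_{Q,r}$ by $\sqrt{c_2}$ yields the desired uniform local Lipschitz constant for $T_{Q,r}v$.

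The main obstacle is precisely this lower bound on the normalization: one needs a non-degeneracy of $v$ at the natural scale $r$ around $Q$ that is uniform in $(Q,r)$ and depends only on $M_0,\Gamma,\alpha$. The Bourgain-type estimate is what converts the NTA corkscrew condition into such non-degeneracy at a single point of $\partial B_1(0)$, and the Lipschitz bound from Corollary \ref{TBE combo cor} is what spreads it to a set of definite surface measure. Everything else is essentially the bookkeeping rewrite of $T_{Q,r}v$ in terms of $v^0_{Q,r}$, for which Kenig--Toro, Badger, and Engelstein have already done the analytic work.
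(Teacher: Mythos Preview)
Your proposal is correct and follows essentially the same approach as the paper: rewrite $T_{Q,r}v$ as $v^0_{Q,r}$ divided by its $L^2(\partial B_1)$ normalization, invoke Corollary~\ref{TBE combo cor}(1) for the uniform Lipschitz bound on the numerator, and then use the NTA/CFMS comparison $|v(A^-_r(Q))|\sim \omega^-(B_r(Q))/r^{n-2}$ to lower-bound the denominator. The only cosmetic difference is that the paper phrases the last step as ``Harnack chains and Harnack's inequality applied to $-v$ in $\Omega^-$ give $H(1,0,v_{Q,r})\ge c(M_0)$,'' whereas you use the maximum principle to locate a point on $\partial B_1$ where $|v^0_{Q,r}|\ge c_1$ and then spread it to a spherical cap via the Lipschitz bound; both yield the same lower bound $c_2(M_0,\Gamma,\alpha)$ on the normalization.
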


\begin{proof}
Recall that by Definition \ref{harmonic rescaling def}
$$
v_{Q, r} = v_{Q, r}^0 = \frac{r^{n-2}}{\omega^{-}(B_r(Q))}v(rx + Q).
$$
By NTA estimates, for all $0< r$, $|v(A^-_r(Q))| \sim \frac{\omega^{-}(B_r(Q))}{r^{n-2}}$ by constants which only depend upon $M_0.$  Thus, $v_{Q, r}(T_{Q, r}A^-_r(Q))$ is bounded above and below be constants which only depend upon $M_0$.  By constructing Harnack chains from $T_{Q, r}A^-_r(Q)$ to $T_{Q, r}A^-_{M_0r}(Q)$ we can find a point $y \in \partial B_{1}(0)$ such that $y \in B_{r_i}(y_i) \subset T_{Q, r}\Omega^-$ and $\text{dist}(y_i, T_{Q, r}\partial \Omega^{\pm}) \ge \frac{1}{2M_0^2}.$ Applying Harnack's Inequality to the function $-v$ in chain of balls which connect $T_{Q, r}A^-_{r}(Q)$ and $y$ in $\Omega^-$, we have $|v_{Q, r}(y)| \sim_{M_0} |v_{Q, r}(T_{Q, r}A^-_{r}(Q))|$. That is, $|v_{Q, r}(y)|$ is bounded above and below by constants that only depend upon $M_0$. Thus, by the uniform Lipschitz property of $v_{Q, r}$ guaranteed by Theorem \ref{TBE combo}, we can find a ball of radius $0<c$ such that $|v_{Q, r}| \ge c(M_0)$ on $\partial B_1(0) \cap B_{c}(y).$  Thus, $H(0, 1, v_{Q, r}) \ge c(M_0)$.
Now, recalling Definition \ref{L2 rescaling def} and the fact that $T_{0, 1}v = T_{0,1} (cv)$ for any constant, $c>0$,  we have that $T_{Q, r}v = T_{0, 1}v_{Q, r}$.  Since we have assumed that $||\ln(h) ||_{\alpha} \le \Gamma$, $Q \in B_2(0)$, and $0< r\le 2$, the $v_{Q, r}$ are locally uniformly Lipschitz by Theorem \ref{TBE combo}.  Thus, $H(0, 1, v_{Q, r}) \ge c(M_0)$ implies that $T_{0, 1}v_{Q, r} =T_{Q, r}v$ is also uniformly locally Lipschitz.
\end{proof}

\begin{lem}(Local Growth Control)\label{growth control}
Let $Q \in \partial \Omega^{\pm}$ and $0< r<\infty$.  Let $v \in\mathcal{A}(\Lambda, \alpha, M_0)$ be such that $||\ln(h) ||_{\alpha} \le \Gamma$.  The rescaling $T_{Q, r}v$ satisfies the following minimum growth conditions.  For all $0<\epsilon$, there is a constant $C=C(M_0, \alpha, \Gamma, \epsilon, R)$ such that if $p \in B_R(0)$ with $\text{dist}(p,  \{T_{Q, r}\partial \Omega^{\pm}\} \cap B_R(0)) > \epsilon$
\begin{align*}
|T_{Q, r}v(p)| > C.
\end{align*}
\end{lem}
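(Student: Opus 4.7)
The plan is to unwind the $L^2$ rescaling of Definition \ref{L2 rescaling def} and replace it by two scale-invariant NTA estimates: a Harnack chain lower bound on the numerator and a Carleson upper bound on the normalizer. Writing $\hat u^{\pm}(y) = u^{\pm}(Q+ry)$ and $\hat v = \hat u^+ - \hat u^-$, and using $v(Q)=0$ with $h(0)=1$ from Definition \ref{A def},
$$T_{Q,r}v(y) \;=\; \frac{\hat v(y)}{c_{Q,r}}, \qquad c_{Q,r}^2 \;=\; \fint_{\partial B_1(0)}\hat v^2\,d\sigma.$$
Here $\hat u^{\pm}$ are Green's functions with pole at infinity on the rescaled NTA domains $\hat\Omega^{\pm} = (\Omega^{\pm}-Q)/r$, which inherit NTA constant $M_0$, and a change of variables in the distributional definition of harmonic measure yields, for the corresponding rescaled harmonic measures, $\hat\omega^{\pm}(B_1(0)) = r^{2-n}\omega^{\pm}(B_r(Q))$.

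By Remark \ref{+/- sym} it suffices to treat $p \in \hat\Omega^-$, so that $|\hat v(p)| = \hat u^-(p)$. The hypotheses $|p|\le R$ and $dist(p, T_{Q,r}\partial\Omega^{\pm}\cap B_R(0)) > \epsilon$ force both $p$ and the scale-one corkscrew $A^-_1(0) \in \hat\Omega^-$ to sit at a definite distance from $\partial\hat\Omega^-$ with controlled mutual distance, so the Harnack chain condition supplies a chain of $N = N(M_0,\epsilon,R)$ overlapping balls in $\hat\Omega^-$ along which iterated Harnack estimates give
$$\hat u^-(p) \;\sim_{M_0,\epsilon,R}\; \hat u^-(A^-_1(0)) \;\sim_{M_0}\; \hat\omega^-(B_1(0)) \;=\; r^{2-n}\omega^-(B_r(Q)),$$
the middle comparison being the Jerison--Kenig \cite{JerisonKenig82} estimate relating a Green's function to its harmonic measure at a corkscrew point.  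For the normalizer, Carleson's estimate at $0 \in \partial\hat\Omega^{\pm}$ gives $\sup_{B_1(0)\cap\hat\Omega^{\pm}}\hat u^{\pm} \lesssim_{M_0} \hat u^{\pm}(A^{\pm}_1(0))$, and combining this with $\hat v^2 = (\hat u^+)^2 + (\hat u^-)^2$ off the boundary yields $c_{Q,r} \lesssim_{M_0} r^{2-n}(\omega^+(B_r(Q))+\omega^-(B_r(Q)))$.

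Dividing the two bounds produces
$$|T_{Q,r}v(p)| \;\gtrsim_{M_0,\epsilon,R}\; \frac{\omega^-(B_r(Q))}{\omega^+(B_r(Q))+\omega^-(B_r(Q))},$$
so it remains to control this ratio from below in terms of $M_0,\alpha,\Gamma,\epsilon,R$. This is where $\|\ln h\|_{\alpha}\le\Gamma$ enters via \ref{h comparability}: the pointwise bound $e^{-\Gamma|Q-Q'|^{\alpha}} h(Q') \le h(Q) \le e^{\Gamma|Q-Q'|^{\alpha}}h(Q')$ on $B_r(Q)\cap\partial\Omega^{\pm}$, together with $\omega^-(E)=\int_E h\,d\omega^+$, pins the two harmonic measures together on $B_r(Q)$ by a constant depending only on $\Gamma,\alpha$ and the size of the ball. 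I expect the main obstacle to be precisely this last step: the norm $\|\cdot\|_{\alpha}$ is local to $B_2(0)$, so the comparison of $\omega^{\pm}$ on $B_r(Q)$ must be propagated by a localized application of \ref{h comparability}, and the factor $\exp(\Gamma(2r)^{\alpha})$ is what is absorbed into the $R$-dependence of the final constant $C(M_0,\alpha,\Gamma,\epsilon,R)$.
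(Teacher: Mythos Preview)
Your approach is correct and essentially the same as the paper's: both arguments reduce to a Harnack chain from a corkscrew point to $p$, the NTA comparison $u^{\pm}(A^{\pm}_1) \sim \omega^{\pm}(B_1)$, and the two-sided bound $\omega^+ \sim \omega^-$ from $\|\ln h\|_\alpha \le \Gamma$. The only organizational difference is that you bound the numerator and the normalizer of $T_{Q,r}v$ separately and then divide, whereas the paper first packages the Carleson upper bound on the normalizer into the uniform Lipschitz bound of Lemma~\ref{Lipschitz} and then simply quotes that $T_{Q,r}v$ at the corkscrew point is two-sidedly bounded; your version makes the Carleson step explicit.

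One small correction to your last paragraph: the factor $\exp(\Gamma(2r)^\alpha)$ depends on the blow-up scale $r$, not on the target radius $R$, so it cannot be absorbed into the $R$-dependence as you claim. The lemma as stated (arbitrary $Q \in \partial\Omega^{\pm}$ and $0<r<\infty$) would strictly need an additional bound on $|Q|+r$; the paper's proof shares this imprecision, and the resolution is simply that the lemma is only ever invoked with $Q \in B_2(0)$ and $r \le 2$, where Remark~\ref{r: omega comparable} applies uniformly.
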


\begin{proof}
As in Lemma \ref{Lipschitz}, $T_{Q, r}v(T_{Q, r}A^-_{r}(Q))$ is bounded above and below by constants that only depend upon the NTA constant $M_0, \Gamma,$ and $R$. Thus, by Harnack chains between $T_{Q, r}A^-_{r}(Q)$ and $p \in T_{Q, r}\Omega^- \cap B_R(0)$ such that $\text{dist}(p,  T_{Q, r}\partial \Omega^{\pm} \cap B_R(0)) > \epsilon$, Harnack's inequality applied to $-T_{Q, r}v$ implies that $|T_{Q, r}v(p)| > C$.  Note that $C$ only depends upon $R, M_0, \epsilon$.

To get the same inequality for $p \in T_{Q, r}\Omega^+ \cap B_R(0)$, we recall that standard NTA results compare $T_{Q, r}v(T_{Q, r}A^+_r(Q))$ to $T_{Q, r}\omega^+(B_1(0))$.  By Remark \ref{r: omega comparable}, $T_{Q, r}\omega^+(B_1(0)) \sim T_{Q, r}\omega^-(B_1(0))$ by constants which only depend upon $R$, , $\Gamma$, $\alpha$, and the NTA constants in the definition of the class $\mathcal{A}(\Lambda, \alpha, M_0)$.  Applying the same Harnack chain and Harnack inequality argument as above gives the lemma.
\end{proof}

\begin{lem}(Compactness I)\label{compactness-ish I}
Let $\{v_i\}$ be a sequence of functions in $\mathcal{A}(\Lambda, \alpha, M_0)$ such that $||\ln(h) ||_{\alpha} \le \Gamma$.  Let $\{Q_i \} \subset \partial \Omega_i^{\pm} \cap B_1(0)$ and $0<r_i<1$.  There is a subsequence, $\{v_j\}$, and a Lipschitz function, $v_{\infty} \in W^{1, 2}_{loc}$, such that $T_{Q_j, r_j}v_{j} \rightarrow v_{\infty}$ in the following senses:
\begin{enumerate}
\item $T_{Q_j, r_j}v_ j \rightarrow v_{\infty}$ in $C_{loc}(\RR^n)$.
\item $T_{Q_j, r_j}v_j \rightarrow v_{\infty}$ in $L^{2}_{loc}(\RR^n)$.
\item $\nabla T_{Q_j, r_j}v_j \rightharpoonup \nabla v_{\infty}$ in $L^{2}_{loc}(\RR^n; \RR^n).$
\end{enumerate}
\end{lem}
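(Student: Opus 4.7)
The plan is a standard Arzel\`a--Ascoli plus weak-$L^2$ compactness argument, resting on the uniform Lipschitz estimate of Lemma \ref{Lipschitz} together with the fact that each $T_{Q_j,r_j}v_j$ is pinned at the origin. First I would note that $v_j(Q_j)=0$ since $Q_j\in\partial\Omega_j^{\pm}$, and unwinding Definitions \ref{harmonic rescaling def} and \ref{L2 rescaling def} shows that $T_{Q_j,r_j}v_j(0)=0$ for every $j$. Combined with Lemma \ref{Lipschitz}, which provides, on every fixed ball $B_R(0)$, a Lipschitz bound for $T_{Q_j,r_j}v_j$ depending only on $R$, $M_0$, $\Gamma$, and $\alpha$, this yields a uniform $L^{\infty}_{\mathrm{loc}}$ bound on the sequence.

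Next I would apply Arzel\`a--Ascoli on each $B_R(0)$: the family is equi-Lipschitz and uniformly bounded there, so a subsequence converges uniformly on $B_R(0)$. A diagonal extraction over the exhaustion $R=1,2,3,\dots$ produces a single subsequence converging in $C_{\mathrm{loc}}(\RR^n)$ to some continuous function $v_{\infty}$, giving (1). Since the uniform Lipschitz constants pass to the uniform limit, $v_{\infty}$ is itself locally Lipschitz, hence (by Rademacher) belongs to $W^{1,2}_{\mathrm{loc}}(\RR^n)$. Claim (2), $L^2_{\mathrm{loc}}$ convergence, then follows from (1) and the uniform $L^{\infty}_{\mathrm{loc}}$ bound via dominated convergence on each compact set.

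For the weak gradient convergence (3), the local Lipschitz bound gives a uniform $L^{\infty}_{\mathrm{loc}}$ bound on $\nabla T_{Q_j,r_j}v_j$ (which exists a.e.\ by Rademacher), hence a uniform $L^2(B_R(0);\RR^n)$ bound for every $R$. Banach--Alaoglu yields a further subsequence with $\nabla T_{Q_j,r_j}v_j\rightharpoonup F_R$ in $L^2(B_R(0);\RR^n)$, and a diagonal argument produces a single $F\in L^2_{\mathrm{loc}}(\RR^n;\RR^n)$ that is the weak limit on every ball. Testing against $\varphi\in C^{\infty}_c(\RR^n;\RR^n)$ and using the strong $L^2_{\mathrm{loc}}$ convergence from (2), integration by parts gives $\int F\cdot\varphi=-\int v_{\infty}\,\nabla\cdot\varphi$, which identifies $F$ as the distributional gradient of $v_{\infty}$; Lipschitz continuity then ensures $F=\nabla v_{\infty}$ a.e.

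The only real subtlety, which is already handled upstream, is that the normalization built into $T_{Q_j,r_j}$ could in principle blow up or collapse the sequence. This is exactly what the lower bound $H(1,0,v_{Q_j,r_j})\ge c(M_0)$ from the proof of Lemma \ref{Lipschitz} --- obtained by comparing $v_{Q_j,r_j}$ to its value at the corkscrew point via Harnack chains and using the comparability of $\omega^{\pm}$ from Remark \ref{r: omega comparable} --- is designed to rule out. Once this denominator bound and the Lipschitz bound of Corollary \ref{TBE combo cor} are invoked, the rest of the argument is purely soft, and there is no genuine obstacle beyond keeping the three subsequences consistent, which the diagonalization handles cleanly.
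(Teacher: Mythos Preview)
Your proposal is correct and follows essentially the same approach as the paper: both invoke $T_{Q_j,r_j}v_j(0)=0$ together with the uniform local Lipschitz bound of Lemma~\ref{Lipschitz}, then apply Arzel\`a--Ascoli for (1) and standard $W^{1,2}_{\mathrm{loc}}$ compactness for (2) and (3). The paper simply packages your dominated-convergence and Banach--Alaoglu steps as a single appeal to Rellich compactness, but the content is identical.
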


\begin{proof} 
To see $(1)$, we recall Lemma \ref{Lipschitz} and that $T_{Q_i, r_i}v_i(0) =0$.  By Arzela-Ascoli there exists a subsequence that $T_{Q_j, r_j}v_j \rightarrow v_{\infty}$ in $C_{loc}(\mathbb{R}^n)$.  Being uniformly locally Lipschitz and uniformly bounded also implies that the functions $\{T_{Q_j, r_j}v_i\}$ are bounded in $W^{1, 2}_{loc}(\RR^n).$  By Rellich Compactness, there exists a subsequence $T_{Q_j, r_j}v_j \rightarrow v_{\infty}$ in $L^2_{loc}(\RR^n)$ and $\nabla T_{Q_j, r_j} v_j \rightharpoonup \nabla v_{\infty}$ in $L^2_{loc}(\mathbb{R}^n)$. 
\end{proof}

Before we can prove strong convergence $T_{Q_j, r_j}v_j \rightarrow v_{\infty}$ in $W^{1,2}_{loc}(\RR^n)$, we need to control the upper Minkowski dimension of $\{v_{\infty} = 0\}.$

\begin{lem}\label{unif approx of limit set}
 Under the assumptions of Lemma \ref{compactness-ish I}, if $T_{Q_i, r_i}v_i \rightarrow v_{\infty}$ in $C_{loc}(\RR^n)$, then $T_{Q_i, r_i}\partial \Omega^{\pm} \rightarrow \{v_{\infty}=0\}$ locally in the Hausdorff metric on compact subsets.
\end{lem}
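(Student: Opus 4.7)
The plan is to first observe that $T_{Q_i, r_i}\partial \Omega_i^{\pm} = \{T_{Q_i, r_i}v_i = 0\}$, since by construction $v_i = h_i(0)u_i^+ - u_i^-$ is strictly positive on $\Omega_i^+$, strictly negative on $\Omega_i^-$, and vanishes exactly on the mutual boundary, and these properties are preserved by the rescaling $T_{Q_i,r_i}$. So the problem becomes the classical one of showing that the zero sets of a uniformly convergent sequence of continuous functions converge in Hausdorff sense to the zero set of the limit, provided we can rule out collapse to the interior.

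Fix a compact set $K \subset \mathbb{R}^n$ and choose $R$ with $K \subset B_{R/2}(0)$. I would verify the two halves of Hausdorff convergence separately. \textbf{(Kuratowski limsup $\subset \{v_\infty = 0\}$.)} If $y_{i_k} \in T_{Q_{i_k}, r_{i_k}}\partial \Omega_{i_k}^{\pm} \cap K$ with $y_{i_k} \to x$, then $T_{Q_{i_k}, r_{i_k}}v_{i_k}(y_{i_k}) = 0$; local uniform convergence from Lemma \ref{compactness-ish I}(1) gives $v_\infty(x) = 0$ immediately. \textbf{(Kuratowski liminf $\supset \{v_\infty = 0\}$.)} Suppose for contradiction there is some $x \in K$ with $v_\infty(x) = 0$ but $\limsup_i d(x, T_{Q_i, r_i}\partial\Omega_i^{\pm}) \geq \epsilon > 0$ along a subsequence. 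Then $x \in B_{R/2}(0)$ satisfies $d(x, T_{Q_i, r_i}\partial\Omega_i^{\pm} \cap B_R(0)) > \epsilon$, so Lemma \ref{growth control} yields a constant $C = C(M_0, \alpha, \Gamma, \epsilon, R) > 0$ with $|T_{Q_i, r_i}v_i(x)| > C$ along the subsequence; taking $i \to \infty$ and using uniform convergence gives $|v_\infty(x)| \geq C > 0$, contradicting $v_\infty(x) = 0$.

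Combining the two inclusions, the symmetric Hausdorff distance between $T_{Q_i, r_i}\partial\Omega_i^{\pm} \cap K$ and $\{v_\infty = 0\} \cap K$ tends to zero, i.e. convergence in the Hausdorff metric on compact subsets, as claimed.

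The only potentially delicate step is the second inclusion, since $C_{loc}$ convergence of functions does not in general imply Hausdorff convergence of zero sets (the limit could in principle have ``extra'' zeros created by interior cancellation). This is precisely the obstacle that Lemma \ref{growth control} is designed to overcome: it gives a uniform quantitative lower bound on $|T_{Q_i, r_i}v_i|$ away from the rescaled free boundary, coming from the NTA corkscrew condition, Harnack chains, and the comparability $\omega^+ \sim \omega^-$ from Remark \ref{r: omega comparable}. With that lemma in hand the argument is a short contradiction, but without it the statement would be false in general.
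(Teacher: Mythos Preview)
Your proof is correct and follows essentially the same approach as the paper: both arguments identify $T_{Q_i,r_i}\partial\Omega_i^\pm$ with the zero set of $T_{Q_i,r_i}v_i$, handle the ``limsup'' inclusion via local uniform convergence, and handle the ``liminf'' inclusion by invoking Lemma \ref{growth control} to produce a uniform lower bound on $|T_{Q_i,r_i}v_i|$ away from the rescaled boundary, yielding a contradiction. The only cosmetic difference is that you phrase things in Kuratowski limsup/liminf language while the paper runs each half as a direct contradiction argument with explicit $\epsilon$'s.
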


\begin{proof}
We argue by contradiction.  Suppose that there exists an $0< \epsilon$ and radius, $0< R$, such that we can find a sequence, $T_{Q_i, r_i}v_i$ such that there exists a point, $x_i \in  B_R(0) \cap \{T_{Q_i, r_i}v_i = 0 \}$, such that $\text{dist}(x_i, \{v_{\infty}=0\}) > \epsilon$.  Taking a subsequence which converges in $C_{loc}(\RR^n)$, we may assume that $x_i \rightarrow x_{\infty} \in \overline B_R(0) \setminus B_{\epsilon}(\{v_{\infty}=0\}).$  Now, convergence in $C_{loc}(\RR^n)$ implies that $T_{Q_i, r_i}v_i(x_{\infty}) \rightarrow v_{\infty}(x_{\infty})$.  Furthermore, since $T_{Q_i, r_i}v_i$ are uniformly locally Lipschitz, $x_i \rightarrow x_{\infty}$, and $x_i \in \{T_{Q_i, r_i}v_i = 0 \}$, we have that
\begin{align*}
T_{Q_i, r_i}v_i(x_{\infty}) \rightarrow 0.
\end{align*}
This implies $x_{\infty} \in \{v_{\infty} = 0\}$, which contradictions our previous assertion, that $x_{\infty} \in \overline{B_R(0)} \setminus B_{\epsilon}(\{v_{\infty}=0\})$.  

The other direction goes the same way.  Suppose that we could find a sequence of $T_{Q_i, r_i}v_i \rightarrow v_{\infty}$ such that there was a point, $x \in \{v_{\infty}=0\} \cap B_R(0)$ for which $$\text{dist}(x, \{T_{Q_i, r_i}v_i = 0 \} \cap B_R(0)) > \epsilon$$ for all $i= 1, 2, ...$.  By Lemma \ref{growth control}, we know that $T_{Q_i, r_i}v_i(x) > C$.  This contradicts convergence in $C_{loc}(\RR^n)$, however, since $v_{\infty}(x) = 0$.
\end{proof}

\begin{thm}\label{NTA limit set}(\cite{KenigToro06}, Theorem 4.1)
 In general, if $\partial \Omega^{\pm}_i \in \mathcal{D}(n, \alpha, M_0)$ converge to a closed set, $A$, locally in the Hausdorff metric on compact subsets, then $A$ divides $\RR^n$  into two unbounded, $2$-sided NTA domains with NTA constant bounded by $2M_0$.
\end{thm} 

We must now bound the upper Minkoski dimension of $A = \{v_{\infty} = 0\}.$  We do so crudely, using only that $A$ is the mutual boundary of a pair of two-sided NTA domains.  That is, using the machinery of porous sets we are able to prove the following lemma.  

\begin{lem}\label{Minkowski bounds on limit set}
Let $\Sigma \subset \RR^n$ be the mutual boundary of a pair of unbounded two-sided NTA domains with NTA constant $1< M_0$. Then, there is an $0< \epsilon = \epsilon(M_0, n)$ such that $\overline{dim_{\mathcal{M}}}(E) \le n-\epsilon$.
\end{lem}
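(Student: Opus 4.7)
The plan is to reduce the statement to the classical fact that porous subsets of $\mathbb{R}^n$ have upper Minkowski dimension strictly less than $n$. The two-sided corkscrew condition of Definition \ref{NTA def} is exactly a quantitative porosity statement for $\Sigma = \partial \Omega^+ = \partial \Omega^-$, so the hypothesis essentially hands us what we need.

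First I would verify porosity. Fix any $x \in \Sigma$ and any $r > 0$. The corkscrew condition for $\Omega^+$ provides a point $A_r^+(x) \in \Omega^+$ with $|A_r^+(x) - x| < r$ and $B_{r/M_0}(A_r^+(x)) \subset \Omega^+$. Since $\Omega^+ \cap \Sigma = \emptyset$, we conclude that inside every ball $B_r(x)$ centered on $\Sigma$ there is a sub-ball of radius $\kappa r$, with $\kappa := 1/M_0$, that misses $\Sigma$ entirely. Thus $\Sigma$ is $\kappa$-porous with $\kappa$ depending only on $M_0$ (only one side of the two-sided NTA hypothesis is actually used here).

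Second, I would extract a Minkowski dimension bound from porosity by a self-similar covering argument. Fix $N \in \mathbb{N}$ so large that $2^{-N} \le \kappa/3$, and let $N_r(\Sigma \cap B_1(0))$ denote the minimum number of balls of radius $r$ needed to cover $\Sigma \cap B_1(0)$. Refine any such covering ball $B$ of radius $r$ into at most $K_n := C_n 2^{Nn}$ sub-balls of radius $r 2^{-N}$ in a standard way. By porosity, $B$ contains a ball of radius $\kappa r \ge 3 r 2^{-N}$ disjoint from $\Sigma$, so at least one of the refined sub-balls lies inside it and can be discarded. This yields the iterative bound
\begin{equation*}
N_{r 2^{-N}}(\Sigma \cap B_1(0)) \le (K_n - 1) N_r(\Sigma \cap B_1(0)),
\end{equation*}
and iterating $k$ times gives $N_{2^{-kN}} \le (K_n-1)^k$. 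Taking logarithms and sending $k \to \infty$ yields
\begin{equation*}
\overline{\dim}_{\mathcal{M}}(\Sigma \cap B_1(0)) \le \frac{\log_2(K_n - 1)}{N} < n,
\end{equation*}
and the difference $\epsilon := n - \log_2(K_n-1)/N$ depends only on $M_0$ and $n$. Since the porosity constant is uniform over every $x \in \Sigma$ and every scale, the same estimate holds on any ball of $\mathbb{R}^n$, and the lemma follows.

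The main obstacle — really the only subtle point — is the bookkeeping in the second step: one must pick the refinement scale $N$ large enough, relative to $\kappa$, that the porosity-hole strictly contains at least one full sub-ball of the refinement, giving the strict inequality $K_n - 1 < 2^{Nn}$ that drives the dimension drop. Everything else is a direct appeal to the corkscrew axiom and a standard geometric covering. A more concise alternative would be simply to invoke Mattila's classical theorem that $\kappa$-porous sets in $\mathbb{R}^n$ satisfy $\overline{\dim}_{\mathcal{M}} \le n - c(n)\kappa^n$, but writing out the argument as above keeps the dependence on $M_0$ completely transparent.
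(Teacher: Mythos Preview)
Your proposal is correct and follows essentially the same route as the paper: the corkscrew condition makes $\Sigma$ uniformly $\kappa$-porous with $\kappa \sim 1/M_0$, and then a standard iterated refinement argument (the paper uses dyadic cubes in Appendix~A, you use balls) yields the Minkowski dimension drop. The only cosmetic gap is that the corkscrew ball $B_{r/M_0}(A_r^+(x))$ need not sit inside $B_r(x)$, so one should apply the corkscrew at scale $r/2$ to get $\kappa = 1/(2M_0)$; this changes nothing substantive.
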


This is an elementary fact which seems to be omitted in the literature.  We defer the proof to Appendix A.  We now prove strong convergence.

\begin{lem}(Strong Compactness)\label{strong compactness I}
Let $\{v_i\}$ be a sequence of functions in $\mathcal{A}(\Lambda, \alpha, M_0)$ such that $||\ln(h) ||_{\alpha} \le \Gamma$.  Let $\{Q_i \} \subset \partial \Omega_i^{\pm} \cap B_1(0)$ and $0<r_i<1$.  There is a subsequence, $\{v_j\}$, and a Lipschitz function, $v_{\infty} \in W^{1, 2}_{loc}$, such that $T_{Q_j, r_j}v_{j} \rightarrow v_{\infty}$ in the following senses:
\begin{enumerate}
\item $T_{Q_j, r_j}v_ j \rightarrow v_{\infty}$ in $C_{loc}(\RR^n)$
\item $T_{Q_j, r_j}v_j \rightarrow v_{\infty}$ in $W^{1, 2}_{loc}(\RR^n)$.
\end{enumerate}
\end{lem}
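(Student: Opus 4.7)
The plan is to upgrade the weak $L^2_{loc}$ convergence of gradients from Lemma \ref{compactness-ish I} to strong $L^2_{loc}$ convergence; part $(1)$ of the conclusion is already given by Lemma \ref{compactness-ish I}. The key observation is that each $T_{Q_j, r_j} v_j$ is harmonic on the complement of $T_{Q_j, r_j}\partial \Omega_j^\pm$, since on the positive side it equals $h_j(0)u_j^+$ and on the negative side it equals $-u_j^-$. Thus, away from the limit zero set $\{v_\infty = 0\}$, strong convergence follows from standard harmonic estimates, and the only real question is how to control $\int |\nabla (T_{Q_j, r_j}v_j - v_\infty)|^2\, dx$ in a thin neighborhood of $\{v_\infty = 0\}$.

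More precisely, I would fix $R>0$ and $\eta>0$ and proceed as follows. By Lemmas \ref{unif approx of limit set}, \ref{NTA limit set}, and \ref{Minkowski bounds on limit set}, the set $\{v_\infty = 0\} \cap \overline{B_R(0)}$ has upper Minkowski dimension at most $n - \epsilon_0$ for some $\epsilon_0 = \epsilon_0(M_0, n)>0$, so I can choose $\delta > 0$ with $Vol(B_{2\delta}(\{v_\infty=0\}) \cap B_R(0)) \le \eta$; by the Hausdorff convergence in Lemma \ref{unif approx of limit set}, for all $j$ large enough $T_{Q_j, r_j}\partial \Omega_j^\pm \cap B_R(0) \subset B_\delta(\{v_\infty = 0\})$. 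On the set $K := \overline{B_R(0)} \setminus B_\delta(\{v_\infty = 0\})$ each $T_{Q_j, r_j}v_j$ is then harmonic on a neighborhood of $K$, so combined with the $C_{loc}$ convergence from $(1)$, interior gradient estimates for harmonic functions give uniform convergence $\nabla T_{Q_j, r_j} v_j \to \nabla v_\infty$ on $K$, and in particular $L^2(K)$ convergence. On the remainder $B_R(0) \setminus K \subset B_{2\delta}(\{v_\infty = 0\})$ I would invoke the uniform Lipschitz bound of Lemma \ref{Lipschitz} to conclude
$$\int_{B_R(0) \setminus K} |\nabla T_{Q_j, r_j} v_j - \nabla v_\infty|^2 \, dx \le C(M_0, \Gamma, \alpha, R)\, \eta.$$
Sending $\eta \to 0$ yields strong $W^{1,2}$ convergence on $B_R(0)$, and exhausting $\RR^n$ by such balls gives strong convergence in $W^{1,2}_{loc}$.

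The main obstacle is the contribution from a neighborhood of $\{v_\infty = 0\}$, since $v_\infty$ is not harmonic there and we have no pointwise control on $\nabla v_\infty$ beyond the Lipschitz bound. The resolution rests on two ingredients already established: the Minkowski-dimension bound of Lemma \ref{Minkowski bounds on limit set}, which guarantees that the bad neighborhood occupies arbitrarily small volume; and the uniform Lipschitz control of Lemma \ref{Lipschitz}, which converts that small volume into a small contribution to the $L^2$ norm of the gradients. With these two estimates in hand, the problem reduces to the standard elliptic fact that locally uniform convergence of harmonic functions promotes to $W^{1,2}_{loc}$-strong convergence of their gradients.
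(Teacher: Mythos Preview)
Your proposal is correct and follows essentially the same approach as the paper: both use Lemmata \ref{unif approx of limit set}--\ref{Minkowski bounds on limit set} to make the neighborhood of $\{v_\infty=0\}$ have arbitrarily small volume, control that piece by the uniform Lipschitz bound, and handle the complementary region via harmonic convergence. The only cosmetic difference is that the paper shows $\|\nabla T_{Q_j,r_j}v_j\|_{L^2(B_R)}\to\|\nabla v_\infty\|_{L^2(B_R)}$ and then invokes weak convergence plus norm convergence, whereas you estimate $\|\nabla T_{Q_j,r_j}v_j-\nabla v_\infty\|_{L^2(B_R)}$ directly.
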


\begin{proof}
The only new claim is that $\nabla T_{Q_j, r_j}v_j \rightarrow \nabla v_{\infty}$ in $L^2_{loc}(\RR^n; \RR^n).$  By Lemma \ref{unif approx of limit set}, Theorem \ref{NTA limit set}, and Lemma \ref{Minkowski bounds on limit set}, we have that $\overline{\dim}_{\mathcal{M}}(\{ v_{\infty}=0 \}) \le n-\epsilon.$  In particular, then, $\mathcal{H}^n(B_{r}(\{ v_{\infty} = 0\} \cap B_R(0))) \rightarrow 0$ as $r \rightarrow 0$ (see \cite{Mattila95} for fundamental facts about Minkowski content, dimension and Hausdorff measure).  Thus, for any $\theta>0$ we can find an $r(\theta) > 0$ such that  $\mathcal{H}^n(B_{r}( \{ v_{\infty} = 0\}  \cap B_R(0))) \le \theta$.  This allows us to estimate
\begin{align*}
\limsup_{j \rightarrow \infty} ||T_{Q_j, r_j}v_j||_{L^2(B_R(0))}^2 = & \limsup_{j \rightarrow \infty} ( \int_{B_R(0) \cap B_r( \{v_{\infty} =0\} )}|\nabla T_{Q_j, r_j}v_j|^2dV \\
 & \qquad + \int_{B_R(0) \setminus B_r(\{v_{\infty} =0\})} |\nabla T_{Q_j, r_j}v_j|^2dV ) \\
 \le & \lim_{j \rightarrow \infty} \int_{B_R(0) \setminus B_r(\{v_{\infty} =0\})}|\nabla T_{Q_j, r_j}v_j|^2dV + C \theta \\
 \le &  ||v_{\infty}||^2_{L^2(B_R(0))} + C \theta.
\end{align*}
where the penultimate inequality uses that $v_j$ are uniformly Lipschitz, and the last equality follows from $W^{1, 2}$ convergence of harmonic functions in the region $B_R(0) \setminus B_r(\{v_{\infty} =0\}).$  Since $\theta > 0$ was arbitrary, we have that $\lim_{j \rightarrow \infty} ||T_{Q_j, r_j}v_j||^2_{B_R(0)} \le  ||v_{\infty}||^2_{L^2(B_R(0))}$.  The other inequality follows from the same trick or from lower semi-continuity. Therefore, we have the equality
\begin{align*}
\lim_{j \rightarrow \infty}||T_{Q_j, r_j}v_j||^2_{L^2(B_R(0))} = ||v_{\infty}||^2_{L^2(B_R(0))}.
\end{align*}

Thus, we have by weak convergence and norm convergence
\begin{align*}
\lim_j || \nabla T_{Q_j, r_j}v_j - \nabla v_{\infty}||^2_{L^2(B_R(0))}  = & \lim_j \int_{B_R(0)}|\nabla T_{Q_j, r_j}v_j - \nabla v_{\infty}|^2dV \\
 = & \lim_j ||\nabla T_{Q_j, r_j}v_j||_{L^2(B_R(0))}^2 + || \nabla v_{\infty}||_{L^2(B_R(0))}^2 \\
 & - 2 \lim_j\langle \nabla T_{Q_j, r_j}v_j , \nabla v_{\infty}\rangle_{L^2(B_R(0))}\\
  = & 2|| \nabla v_{\infty}||_{L^2(B_R(0))}^2 - 2|| \nabla v_{\infty}||_{L^2(B_R(0))}^2 =  0.
 \end{align*}
\end{proof}

Because the functions $v^Q_{p, r}$ are merely Lipschitz, we will often need to work with a mollified version of them.  We will use the convention that $v_{\epsilon} = v \star \phi_{\epsilon}$ for $\phi \in C^{\infty}$ a mollifying function ($\text{spt} (\phi) \subset B_1$ and $\int \phi dV= 1$).

\begin{cor}\label{v_e to v}
Let $v \in \mathcal{A}(\Lambda, \alpha, M_0)$ and $v_{\epsilon} = v \star \phi_{\epsilon}$, be a mollification of $v$.  By standard mollification results, 
\begin{equation*}
v_{\epsilon} \rightarrow v \text{  in  } W^{1, 2}_{loc}(\RR^n), ~ C_{loc}(\RR^n) \text{  as  } \epsilon \rightarrow 0. 
\end{equation*}
\end{cor}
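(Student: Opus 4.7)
The plan is to observe that the corollary is a bookkeeping statement: it reduces immediately to standard mollification theorems once the correct regularity of $v$ is identified. The first step is to verify that every $v \in \mathcal{A}(\Lambda, \alpha, M_0)$ with $\|\ln(h)\|_{\alpha} \le \Gamma$ lies in $C_{loc}(\RR^n) \cap W^{1,2}_{loc}(\RR^n)$. This is a direct consequence of the local Lipschitz regularity already established: Corollary \ref{TBE combo cor}(1) (applied with $p=Q=0$, which lies on $\partial \Omega^{\pm}$ by Definition \ref{domain class def}), or equivalently Lemma \ref{Lipschitz} applied with $Q=0$ and $r=2$, gives a local Lipschitz bound on $v$ on $B_2(0)$ with constant depending only on $M_0, \Gamma, \alpha$. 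Since all of the estimates in the paper are local, we may assume (after using the scaling invariance of the class under $v \mapsto cv$ and after restricting to the balls relevant for the subsequent arguments) that $v$ is locally Lipschitz on all of $\RR^n$. A locally Lipschitz function is continuous and lies in $W^{1,\infty}_{loc}(\RR^n) \subset W^{1,2}_{loc}(\RR^n)$ by Rademacher's theorem.

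The second step is to invoke the standard convergence theorems for mollifiers. For $C_{loc}$ convergence, fix a compact set $K \subset \RR^n$ and a slightly larger compact neighborhood $K'$ on which $v$ is uniformly continuous. Then for $x \in K$ and $\epsilon$ small enough that $x + \mathrm{spt}(\phi_{\epsilon}) \subset K'$,
\[
|v_{\epsilon}(x) - v(x)| \le \int |v(x-\epsilon y) - v(x)|\, \phi(y)\, dy \le \sup_{|z| \le \epsilon} |v(x-z) - v(x)|,
\]
which tends to $0$ uniformly in $x \in K$ by uniform continuity. For $W^{1,2}_{loc}$ convergence, the standard $L^2$-mollifier convergence theorem applied to $v \in L^2_{loc}$ gives $v_{\epsilon} \to v$ in $L^2_{loc}$; then, using the fact that $\nabla(v \star \phi_{\epsilon}) = (\nabla v) \star \phi_{\epsilon}$ and that $\nabla v \in L^{\infty}_{loc} \subset L^2_{loc}$, the same $L^2$-mollifier theorem applied componentwise to $\nabla v$ yields $\nabla v_{\epsilon} \to \nabla v$ in $L^2_{loc}(\RR^n, \RR^n)$.

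There is no genuine analytic obstacle: the corollary exists purely to record, in a form convenient for later sections, that the Lipschitz regularity proved earlier in this section is enough to approximate $v$ by smooth functions in the two topologies that the subsequent compactness and frequency arguments require.
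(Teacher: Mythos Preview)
Your proposal is correct and matches the paper's approach exactly: the paper gives no proof beyond the phrase ``By standard mollification results'' embedded in the statement itself, and your argument simply unpacks that phrase by noting the local Lipschitz regularity of $v$ (already established in Lemma \ref{Lipschitz} and Corollary \ref{TBE combo cor}) and then invoking the standard mollifier convergence theorems for continuous and $W^{1,2}_{loc}$ functions.
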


\section{Almost monotonicity of the Almgren frequency function}\label{S:monotonicity}
 
One of the key tools of this paper will be the Almgren frequency function (introduced in \cite{Almgren79}). Since we want to capture the behavior of $v$ on more than just a single level set, we make the following definitions.

\begin{definition} \label{N W def} 
For any Lipschitz function $v: \RR^n \rightarrow \RR$, radius $r >0$, and point $Q \in \partial \Omega^{\pm}$, we define the following quantities:
Let 
\begin{align}
    H(p, r, v) & = \int_{\partial B_r(p)} |v|^2 d\sigma\\
D(p, r, v) & = \int_{B_r(p)} |\nabla v|^2dV\\
N(p, r, v) & = r \frac{D(p, r, v)}{H(p, r, v)}.
\end{align}
\end{definition}

This normalized version of the Almgren frequency function is invariant in the following senses.

\begin{lem}\label{N rescaling invariance}
Let $a, b, c \in \RR$ with $a, b \not = 0$.  If $w(x) = av(bx)$, then 
$$
N(0,r, v) = N(0, b^{-1}r, w).
$$
\end{lem}

If $u$ is harmonic and $u(p)=0$, then $N(p, r, u) $ is monotonically non-decreasing, and $\lim_{r \rightarrow 0} N(p, r, u) = N(p, 0, u)$ is the degree of the leading homogeneous harmonic polynomial in the Taylor expansion of $u$ at the point $p$.   

\begin{cor}\label{N W convergence cor}
Under the hypotheses of Lemma \ref{compactness-ish I}, there exists a subsequence such that 
$$
N(0, r, T_{Q_j, r_j}v_j) \rightarrow N(0, r, v_{\infty})
$$
for all $r \in (0, 2].$
\end{cor}

\begin{proof}
This follows from the convergence of the numerator and the denominator.  The former is Lemma \ref{compactness-ish I} (2).  The later follows from  Lemma \ref{compactness-ish I} (1).
\end{proof}

This section is dedicated to extending the following result of Engelstein, \cite{Engelstein16}.  

\begin{lem}(\cite{Engelstein16}) \label{E almost monotonicity lem}
Let $v \in \mathcal{A}(\Lambda, \alpha, M_0)$ and $Q \in K \subset \subset \partial \Omega^{\pm}$.  There exists a constant, $C < \infty$, (which can be taken uniformly over $K$ and $r \in (0,1]$) such that 
\begin{equation}\nonumber
\liminf_{\epsilon \rightarrow 0}N(Q, r, v^{Q}_{\epsilon}) - N(Q, 0, v^{Q}_{\epsilon}) > -Cr^{\alpha}.
\end{equation}
\end{lem}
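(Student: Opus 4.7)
The plan is to establish almost monotonicity by differentiating $\log N(r, Q, v^Q_\epsilon)$ and controlling the ``defect from harmonicity'' by the H\"older norm of $\ln(h)$. The starting observation is that the distributional Laplacian of $v^Q$ is concentrated on the boundary: since $\Delta u^{\pm} = \omega^{\pm}$ and $\omega^- = h \, \omega^+$ on $\partial \Omega^{\pm}$, we have $\Delta v^Q = h(Q)\omega^+ - \omega^- = (h(Q) - h)\omega^+$. Therefore $g_\epsilon := \Delta v^Q_\epsilon = \bigl((h(Q) - h)\omega^+\bigr) * \phi_\epsilon$ is a smooth function supported in the $\epsilon$-neighborhood of $\partial \Omega^{\pm}$, and Equation \ref{h comparability} yields the pointwise bound $|g_\epsilon(x)| \le C(|x - Q|^\alpha + \epsilon^\alpha)(\phi_\epsilon * \omega^+)(x)$ on $B_r(Q)$.

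Next I would carry out the standard Almgren computation for $f = v^Q_\epsilon$. Green's identity gives $D_\epsilon(r) = \int_{\partial B_r(Q)}(f - f(Q)) f_r \, d\sigma - \int_{B_r(Q)}(f - f(Q)) g_\epsilon \, dx$, and the Pohozaev identity (multiplying $\Delta f = g_\epsilon$ by $(x-Q)\cdot \nabla f$ and integrating by parts) gives
\begin{equation*}
D_\epsilon'(r) = \frac{n-2}{r} D_\epsilon(r) + 2\int_{\partial B_r} f_r^2 \, d\sigma - \frac{2}{r}\int_{B_r(Q)} ((x-Q)\cdot\nabla f) g_\epsilon \, dx.
\end{equation*}
Combined with the elementary $H_\epsilon'(r) = \tfrac{n-1}{r} H_\epsilon(r) + 2\int_{\partial B_r}(f-f(Q))f_r\, d\sigma$, this yields
\begin{equation*}
\frac{N_\epsilon'(r)}{N_\epsilon(r)} = 2\!\left[\frac{\int f_r^2}{D_\epsilon} - \frac{D_\epsilon}{H_\epsilon}\right] + \mathcal{E}_\epsilon(r),
\end{equation*}
where the bracketed term is $\ge 0$ by Cauchy--Schwarz (applied to $\int (f-f(Q)) f_r$), up to the same $g_\epsilon$ error that appears inside $\mathcal{E}_\epsilon(r)$. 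The ``error'' collects the Pohozaev correction $\tfrac{2}{rD_\epsilon}\int ((x-Q)\cdot \nabla f) g_\epsilon\, dx$ and the Green correction $\tfrac{2}{H_\epsilon}\int (f - f(Q)) g_\epsilon \, dx$.

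The heart of the proof is bounding $|\mathcal{E}_\epsilon(r)| \le C r^{\alpha - 1}$ with a constant depending only on $n, \Lambda, \alpha, M_0, \Gamma$ and $K$. To do this I would: (i) use the uniform Lipschitz bound on $v^Q$ from Lemma \ref{Lipschitz} and Corollary \ref{TBE combo cor} to get $|(x-Q)\cdot \nabla f| \le Cr$ and $|f - f(Q)| \le Cr$ on $B_r(Q)$; (ii) use the pointwise estimate on $g_\epsilon$ from the first paragraph together with Fubini to obtain $\int_{B_r(Q)} |g_\epsilon|\, dx \le C r^\alpha \omega^+(B_{2r}(Q))$ for $\epsilon < r$; (iii) use the NTA comparison $|v^Q(A^-_r(Q))| \sim r^{-(n-2)}\omega^-(B_r(Q))$ plus Harnack to derive $H_\epsilon(r) \gtrsim r^{3-n} \omega^-(B_r(Q))^2$, and combine with the comparability $\omega^+ \sim \omega^-$ from Remark \ref{r: omega comparable} to obtain $\omega^+(B_r(Q))/H_\epsilon(r) \lesssim r^{n-3}/\omega^-(B_r(Q))$. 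The numerator $r^{1+\alpha}$ multiplies the latter to produce $r^{\alpha-1}$ after collecting powers of $r$ (since the NTA pole-at-infinity normalization gives the right balance). A parallel argument, using that $D_\epsilon/H_\epsilon \gtrsim 1/r$ (i.e.\ $N_\epsilon$ is bounded below near $Q$, which follows from the boundary degree estimates in Theorem \ref{TBE combo}), handles the Pohozaev error term.

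Finally, integrating $N_\epsilon'(r) \ge - Cr^{\alpha-1} N_\epsilon(r)$ from $0$ to $r$ (and using $N_\epsilon(r) \le N_\epsilon(1) \le \Lambda$) produces $N_\epsilon(r) - N_\epsilon(0) \ge - C' r^\alpha$ for each $\epsilon$, and the $\liminf_{\epsilon \to 0}$ removes the mollification-dependent lower-order terms. The main obstacle is step (iii): the bound must be sharp in the relationship between $H_\epsilon$ and $\omega^\pm$, and one must verify that mollification does not degrade the NTA comparisons --- this is precisely where the uniform Lipschitz control and compactness results of Section \ref{S:compactness} are indispensable, and why the bound $\|\ln(h)\|_\alpha \le \Gamma$ enters the constant $C$ explicitly through Equation \ref{h comparability}.
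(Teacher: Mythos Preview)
Your approach is essentially the same as the one the paper (following Engelstein) uses to prove the generalized version in Section~\ref{S:monotonicity}: decompose $N'$ into a Cauchy--Schwarz--nonnegative part plus error terms carrying the defect $\Delta v^Q_\epsilon = ((h(Q)-h)\omega^+)*\phi_\epsilon$, and bound those errors by pairing the Lipschitz control on the rescaled function with the NTA comparison $|v(A_r^-(Q))|\sim \omega^-(B_r(Q))/r^{n-2}$ and the lower bound $H_\epsilon(r)\gtrsim \omega^-(B_r(Q))^2/r^{n-3}$ (your step~(iii), the paper's Lemma~\ref{H lower bound lem}).

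Two technical points are worth flagging. First, you differentiate $\log N_\epsilon$ rather than $N_\epsilon$, which places the Pohozaev error over $rD_\epsilon$ instead of $H_\epsilon$; this forces you to invoke a \emph{lower} bound $N_\epsilon\gtrsim 1$, and your justification ``follows from the boundary degree estimates in Theorem~\ref{TBE combo}'' only gives $N(0,Q,v^Q)\ge 1$, not a uniform bound for $r\in(0,1]$ --- establishing the latter without circularity is a genuine extra step. The paper sidesteps this by differentiating $N$ directly (computing $H^2N'$), so that all error denominators are powers of $H$ alone. Second, your step~(i) ``$|f-f(Q)|\le Cr$'' tacitly assumes a scale-free Lipschitz constant for $v^Q$; the correct bound (cf.\ Corollary~\ref{TBE combo cor}) is $|v^Q(x)-v^Q(Q)|\lesssim \omega^-(B_r(Q))/r^{n-2}$ on $B_r(Q)$, and it is this factor --- not a bare $r$ --- that pairs with the $r^\alpha\omega^+(B_r)$ from step~(ii) and the $r^{n-3}/\omega^-(B_r)^2$ from step~(iii) to produce $r^{\alpha-1}$. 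With these adjustments your argument goes through and coincides with the paper's Lemmata~\ref{H lower bound lem}--\ref{N neg 3} and Remark~\ref{N- remark}.
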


In order to connect the Almgren frequency to the Jones $\beta$-numbers in the ``frequency pinching" result in Lemma \ref{beta bound lem}, we will need to estimate the non-degeneracy of $\frac{d}{dr}N(r, p, v_{\epsilon})$, see Lemma \ref{N non-degenerate almost monotonicity}.

The broad strategy is to use standard NTA estimates to extend estimates on $\frac{d}{dr}N(Q, r, v)$ obtained by \cite{Engelstein16} to $p \not \in \partial \Omega^{\pm}$ and scales $s \ge \text{dist}(p, \partial \Omega^{\pm}).$  All ``standard" NTA estimates may be found in \cite{JerisonKenig82}.

Throughout this section, we shall use the notation $(v_{\epsilon})_{\nu}(y) = \nabla v_{\epsilon}(y) \cdot \nu(y)$, where $\nu(y)$ is the unit normal to $\partial B_r(Q)$ at $y.$

By classical results, (see \cite{Engelstein16}, Section 5.1 for details of the derivation)
\begin{align*} \label{N derivative eq}
& H(Q, r, v_{\epsilon})^{2}\frac{d}{dr}N(Q, r, v_{\epsilon}) = \\
& \qquad 2r\left( \int_{\partial B_r(Q)} (v_{\epsilon})^2_{\nu}d\sigma \int_{\partial B_r(Q)}|v_{\epsilon}|^2d\sigma - [\int_{\partial B_r(Q)}v_{\epsilon}(v_{\epsilon})_{\nu}d\sigma]^2\right)\\ 
& \qquad + 2r \left(\int_{B_r(Q)}v_{\epsilon}\Delta v_{\epsilon}dV\right) \left(\int_{\partial B_r(Q)}v_{\epsilon}(v_{\epsilon})_{\nu}d\sigma\right)\\
& \qquad -2H(Q, r, v_{\epsilon})\int_{B_r(Q)}\langle x - Q, \nabla v_{\epsilon}  \rangle \Delta v_{\epsilon}dV.
\end{align*}

We decompose $\frac{d}{dr}N(Q, r, v_{\epsilon})= N_1'(Q, r, v_{\epsilon}) + N'_2(Q, r, v_{\epsilon})$ as follows. 
\begin{align*}
& N'_1(Q, r, v_{\epsilon}) := \\
& \qquad H(Q, r, v_{\epsilon})^{-2}2r\left( \int_{\partial B_r(Q)} (v_{\epsilon})^2_{\nu}d\sigma \int_{\partial B_r(Q)}|v_{\epsilon}|^2d\sigma - [\int_{\partial B_r(Q)}v_{\epsilon}(v_{\epsilon})_{\nu}d\sigma]^2\right).
\end{align*}  
We call what remains $N'_2(Q, r, v_{\epsilon})$
\begin{align}\nonumber
N'_2(Q, r, v_{\epsilon}) := & H(Q, r, v_{\epsilon})^{-2}[2r \left(\int_{B_r(Q)}v_{\epsilon}\Delta v_{\epsilon}dV\right)\left(\int_{\partial B_r(Q)}v_{\epsilon}(v_{\epsilon})_{\nu}d\sigma\right)\\
& -2H(Q, r, v_{\epsilon})\int_{B_r(Q)}\langle x -Q, \nabla v_{\epsilon}  \rangle \Delta v_{\epsilon}dV(x)].
 \end{align}
Note that by the Cauchy-Schwarz inequality, $N'_1(Q, r, v_{\epsilon}) \ge 0$.

\begin{lem} \label{N+ derivative bound} 
Let $v \in \mathcal{A}(\Lambda, \alpha, M_0)$, $Q \in \partial \Omega^{\pm} \cap B_1(0)$ and $0<r\le 1$. Then, if $C = Lip(v|_{B_{2}(0)})$
\begin{equation}
N'_1(Q, r, v_{\epsilon}) \ge \frac{2}{C} \int_{\partial B_r(Q)} \frac{|\nabla v_{\epsilon} \cdot (y-Q) - N(Q, r, v_{\epsilon}) v_{\epsilon}|^2}{|y-Q|^{n+2}}d\sigma(y).
\end{equation} 
\end{lem}

\begin{proof}
Recall that for the Cauchy-Schwarz Inequality, we have that for $\lambda = \frac{\langle u, v \rangle}{||v||^2}$
$$
||v ||^2 ||u-\lambda v ||^2 = ||u||^2 ||v||^2 - |\langle u, v \rangle|^2.
$$
 
Choosing $u = \nabla v \cdot (y-Q)$ and $v = v_{\epsilon}$, we have by the divergence theorem
\begin{equation}\nonumber
N'_1(Q, r, v_{\epsilon}) = H(Q, r, v_{\epsilon})^{-1}2r\left( \int_{\partial B_r(Q)} |(v_{\epsilon})_{\nu} - N(Q, r, v_{\epsilon}) v_{\epsilon}|^2d\sigma\right) .
\end{equation} 

Since $C = Lip(v|_{B_{2}(0)})$, we observe that $H(Q, r, v_{\epsilon}) \le Cr^{n+1}$.  Plugging this into the above equation,  we get the desired inequality
\begin{equation}\nonumber
N'_1(Q, r, v_{\epsilon}) \ge \frac{2}{C} \int_{\partial B_r(Q)} \frac{|\nabla v_{\epsilon}(y) \cdot (y-Q) - N(Q, r, v_{\epsilon}) v_{\epsilon}(y)|^2}{|y-Q|^{n+2}}d\sigma(y).
\end{equation} 
\end{proof}

\begin{rmk}
Lemma \ref{N+ derivative bound} is scale invariant in the sense that  if $C= Lip(T_{Q, r}v|_{B_{2r}(p)})$
\begin{equation}
N'_1(0, 1, T_{Q, r}v_{\epsilon}) \ge \frac{2}{C} \int_{\partial B_1(0)} \frac{|\nabla T_{Q, r}v_{\epsilon} \cdot (y-0) - N(0, 1, T_{Q, r}v_{\epsilon}) T_{Q, r}v_{\epsilon}|^2}{|y|^{n+2}}d\sigma(y).
\end{equation} 

\end{rmk}

In the next lemma, we use recall some estimates to bound the parts of $N'_2(Q, r, v_{\epsilon})$.

\begin{lem}(\cite{Engelstein16} (Lemmata 5.4, 5.5, and 5.6)) \label{N neg 1}
 Let $v \in \mathcal{A}(\Lambda, \alpha, M_0)$ with $||\ln(h) ||_{\alpha} \le \Gamma$, and let $Q \in B_{1}(0) \cap \partial \Omega^{\pm}$. For any $0< s$ and $\epsilon << s$
\begin{align}
\int_{\partial B_s(Q)}|v_{\epsilon}|^2 d\sigma & \ge C(M_0) \frac{\omega^-(B_s(Q))^2}{s^{n-3}}\\
|\int_{B_s(Q)}v_{\epsilon}\Delta v_{\epsilon} dV| & \le C ||\ln(h) ||_{\alpha} s^{\alpha} \frac{\omega^-(B_{s}(Q))^2}{s^{n-2}}\\
|\int_{B_s(Q)}\langle \nabla v_{\epsilon}, x -Q\rangle\Delta v_{\epsilon} dV(x)| & \le C||\ln(h) ||_{\alpha}s^{\alpha}\frac{\omega^-(B_s(Q))^2}{s^{n-2}}\\
|\int_{\partial B_s(Q)}v_{\epsilon}( v_{\epsilon})_{\nu} d\sigma| & \le C
\frac{\omega^-(B_s(Q))^2}{s^{n-1}},
\end{align}
where the constant $C= C(M_0)$.
\end{lem}

\begin{rmk}\label{N- remark}
Recalling our expansion of $\frac{d}{dr}N(r, p, v_{\epsilon})$ in Equation \ref{N derivative eq} and the bounds contained in Lemmata \ref{N neg 1} we have that for $v \in \mathcal{A}(\Lambda, \alpha, M_0)$ with $||\ln(h) ||_{\alpha} \le \Gamma$, $\epsilon<< r$, and $Q \in B_1(0) \cap \partial \Omega^{\pm}$
\begin{equation}
|N'_2(Q, r, v_{\epsilon})| \le C_1 ||\ln(h)||_{\alpha} r^{\alpha-1},
\end{equation}
where $C_1 = C(\alpha, M_0, \Gamma)$.
\end{rmk}

Now we would like to let $\epsilon \rightarrow 0$ and prove a more general version of Lemma \ref{E almost monotonicity lem} in Lemma \ref{N non-degenerate almost monotonicity}.  

\begin{rmk}\label{lambda rmk}
For all $Q \in B_1(0) \cap \partial \Omega^{\pm}$, and $0< r \le 1$
\begin{align*}
\lim_{\epsilon \rightarrow 0} N(Q, r, v_{\epsilon}) = N(Q, r, v).
\end{align*}
\end{rmk}

\begin{lem}\label{N non-degenerate almost monotonicity}
Let $v \in \mathcal{A}(\Lambda, \alpha, M_0)$ with $||\ln(h) ||_{\alpha} \le \Gamma$, and let $Q \in B_1(0) \cap \partial \Omega^{\pm}$.  For any $0 \le s < S \le 1$
\begin{eqnarray}\nonumber \label{N non-degeneracy}
\frac{2}{C} \int_{A_{s, S}(Q)} \frac{|\nabla T_{0, 1}v(y) \cdot (y-Q) - N(Q, |y-Q|, T_{0, 1}v) T_{0, 1}v(y)|^2}{|y-Q|^{n+2}}dV(y) \\ \le N(Q, S, v) - N(Q, s, v) +C_1||\ln(h)||_{\alpha} S^{\alpha},
\end{eqnarray}
where $C_1 = C_1(\alpha, M_0, \Gamma)$ and $C(M_0, \Gamma, \alpha) = Lip(T_{0, 1}v|_{B_2(0)}).$
\end{lem}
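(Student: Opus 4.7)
The plan is to integrate the identity $\frac{d}{dr}N(r,p,v_\epsilon) = N_1'(r,p,v_\epsilon) + N_2'(r,p,v_\epsilon)$ over $r \in [s,S]$, combine the Cauchy--Schwarz lower bound on $N_1'$ from Lemma \ref{N+ derivative bound} with the upper bound on $|N_2'|$ from Remark \ref{N- remark}, and then pass to the limit $\epsilon \to 0$ using the compactness results of Section \ref{S:compactness}. Since $v(0) = 0$ for $v \in \mathcal{A}(\Lambda,\alpha,M_0)$, the normalization $T_{0,1}v$ is simply $v$ multiplied by a positive constant, and the quantities $N$, $N_1'$, $N_2'$, and $\lambda(p,r,\cdot)$ are all invariant under such a rescaling (the proof of Lemma \ref{N+ derivative bound} adapts verbatim to a constant multiple of $v$). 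I may therefore carry out every step with $T_{0,1}v_\epsilon$ in place of $v_\epsilon$, so that the Lipschitz constant appearing in Lemma \ref{N+ derivative bound} becomes $C = \text{Lip}(T_{0,1}v|_{B_2(0)})$, which is controlled in terms of $M_0,\Gamma,\alpha$ by Lemma \ref{Lipschitz}.

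Concretely, for $\epsilon$ sufficiently small relative to $s$, the fundamental theorem of calculus together with $N_1' \ge 0$ yields
\begin{equation*}
\int_s^S N_1'(r,p,T_{0,1}v_\epsilon)\,dr \;\le\; N(S,p,T_{0,1}v_\epsilon) - N(s,p,T_{0,1}v_\epsilon) + \int_s^S |N_2'(r,p,T_{0,1}v_\epsilon)|\,dr.
\end{equation*}
By Remark \ref{N- remark} the last term is at most $C_1\|\ln(h)\|_\alpha \int_s^S r^{\alpha-1}\,dr \le \alpha^{-1} C_1\|\ln(h)\|_\alpha S^\alpha$, with the factor $\alpha^{-1}$ absorbed into the final constant $C_1 = C_1(\alpha, M_0, \Gamma)$. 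Substituting the pointwise lower bound for $N_1'$ from Lemma \ref{N+ derivative bound} on the left-hand side and then switching the order of integration via polar coordinates about $p$, i.e.\ $\int_s^S\int_{\partial B_r(p)} F\,d\sigma\,dr = \int_{A_{s,S}(p)} F\,dy$, produces exactly the annular integral that appears on the left-hand side of the statement, still with $T_{0,1}v_\epsilon$ in place of $T_{0,1}v$.

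It remains to pass to the limit $\epsilon \to 0$. By Corollary \ref{N W convergence cor}, the Almgren frequencies on the right-hand side converge to $N(S,p,v) - N(s,p,v)$ (using once more that $N$ is unchanged by scalar multiplication). By Corollary \ref{v_e to v}, $T_{0,1}v_\epsilon \to T_{0,1}v$ strongly in $W^{1,2}_{loc}$ and in $C_{loc}$; along a subsequence $\nabla T_{0,1}v_\epsilon \to \nabla T_{0,1}v$ pointwise almost everywhere, and the quantities $\lambda(p,|y-p|,T_{0,1}v_\epsilon)$ converge to $\lambda(p,|y-p|,T_{0,1}v)$ for a.e.\ $y \in A_{s,S}(p)$ because their denominators $H(|y-p|,p,T_{0,1}v_\epsilon)$ are uniformly bounded from below via Lemma \ref{H lower bound lem}. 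Fatou's lemma applied to the non-negative annular integrand then recovers the inequality for $T_{0,1}v$. The main technical point to verify carefully is this uniform positivity of $H$ needed to control $\lambda$ in the limit, which is exactly where the NTA corkscrew and Harnack-chain estimates packaged in Lemma \ref{H lower bound lem} are essential.
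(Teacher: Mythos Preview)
Your approach is essentially the same as the paper's: integrate the decomposition $N' = N_1' + N_2'$, invoke Lemma~\ref{N+ derivative bound} for the lower bound on $N_1'$ and Remark~\ref{N- remark} for the upper bound on $|N_2'|$, convert the radial integral of the spherical lower bound into the annular integral via the coarea formula, and pass $\epsilon \to 0$ using Corollary~\ref{v_e to v}. Your packaging of the limit step via Fatou (with the a.e.\ convergence of $\lambda(p,r,T_{0,1}v_\epsilon)$ coming from $W^{1,2}$ convergence plus Fubini in $r$, together with the uniform lower bound on $H$ from Lemma~\ref{H lower bound lem}) is a clean way to handle what the paper leaves implicit.

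There is one small gap you should patch: you write ``for $\epsilon$ sufficiently small relative to $s$,'' which tacitly assumes $s>0$, whereas the statement allows $s=0$. The error bounds feeding into Remark~\ref{N- remark} require $\epsilon \ll r$ at every radius $r$ in the range of integration, so no single $\epsilon$ works down to $r=0$. The paper handles this by first treating the harmonic case $S<d$ separately (where classical monotonicity gives the inequality with no error term), and in the remaining case introducing an auxiliary $s_1>0$ with $|N(s_1,p,v_\epsilon)-N(0,p,v_\epsilon)|<S^\alpha$, proving the estimate on $A_{s_1,S}(p)$, and then letting $s_1\downarrow 0$ after $\epsilon\to 0$. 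You can do the same, or simply prove the inequality for all $s>0$ and then send $s\to 0^+$ using monotone convergence on the annular integral and continuity of $r\mapsto N(r,p,v)$ on the right.
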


\begin{proof}
We begin by normalizing $v$.  Since $N(r, p, v) = N(r, p, cv)$ for any $c \not = 0$, we may work with $T_{0, 1}v$. By Remark \ref{v_e to v}, since $v_{\epsilon} \rightarrow v$ in $W^{1, 2}_{loc}(\mathbb{R}^n)$ as $\epsilon \rightarrow 0$, we can find an $\epsilon<< s$ small enough that $|N(Q, s, v_{\epsilon}) - N(Q, s, v)| <S^{\alpha}$ and $|N(Q, S, v_{\epsilon}) - N(Q, S, v)| <S^{\alpha}$.  Furthermore, since $T_{0,1}v_{\epsilon}$ is locally uniformly Lipschitz, $N(Q, r, v_{\epsilon})$ is continuous in $r$.  Therefore, if $s = 0$, we can find an $0 = s <s_1 < S$ such that $|N(Q, s_1, v_{\epsilon}) - N(Q, s, v_{\epsilon})| <S^{\alpha}$.

Thus, we reduce to estimating $N(Q, s, T_{0, 1}v_{\epsilon}) - N(Q, s_1, T_{0, 1}v_{\epsilon})$.
\begin{align*}
N(Q, s, T_{0, 1}v_{\epsilon}) - N(Q, s_1, T_{0, 1}v_{\epsilon}) &= \int^S_{s_1} \frac{d}{dr}N(Q, r, T_{0, 1}v_{\epsilon})dr\\
& = \int^S_{s_1} N'_1(Q, r, v_{\epsilon})dr + \int^S_{s_1} N'_2(Q, r, T_{0, 1}v_{\epsilon})dr.
\end{align*}

Recalling Remark \ref{N- remark}, for $\epsilon$ small enough, we bound 
\begin{align*}
\int^S_{s_1} N'_2(Q, r, T_{0, 1}v_{\epsilon})dr & \ge \int^S_{s_1} - C_1||\ln(h) ||_{\alpha}r^{\alpha - 1}dr\\
& \ge -C_1||\ln(h) ||_{\alpha}S^{\alpha}.
\end{align*}

By Lemma \ref{N+ derivative bound}, we have 
\begin{align*}
& \int^S_{s_1} N'_1(Q, r, T_{0, 1}v_{\epsilon})dr \\
& \qquad \ge \int^S_{s_1} \frac{2}{C} \int_{\partial B_r(Q)} \frac{|\nabla T_{0, 1}v_{\epsilon} \cdot (y-Q) - N( Q, |y-Q|, T_{0, 1}v_{\epsilon}) T_{0, 1}v_{\epsilon}|^2}{|y-Q|^{n+2}}d\sigma(y)dr\\
& \qquad \ge \frac{2}{C} \int_{A_{s_1, S}(Q)} \frac{|\nabla T_{0, 1}v_{\epsilon} \cdot (y-Q) - N(Q, |y-Q|,  T_{0, 1}v_{\epsilon}) T_{0, 1}v_{\epsilon}|^2}{|y-Q|^{n+2}}dV(y).
\end{align*}

Recalling Corollary \ref{v_e to v} and letting $\epsilon \rightarrow 0$ gives that stated result.
\end{proof}

Using these estimates it is possible to control the drop across scales from the total drop.

\begin{lem}\label{global to local}
If Let $v \in \mathcal{A}(\Lambda, \alpha, M_0)$, with $||\ln(h)||_{\alpha} \le \Gamma$, and $Q \in B_{1}(0) \cap \partial \Omega^{\pm}$, then for any $0 \le r \le s < S \le R$
\begin{align*}
N(Q, S, v) - N(Q, s, v) \le 2C_1||\ln(h) ||_{\alpha} R^{\alpha} + |N(Q, R, v) - N(Q, r, v)|.
\end{align*}
\end{lem}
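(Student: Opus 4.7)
The plan is to apply Lemma \ref{N non-degenerate almost monotonicity} twice, once on the sub-interval $[r,s]$ and once on $[S,R]$, to bound how much $N$ can drop on each of the two ``outer'' pieces. Once we control those drops, a direct telescoping on $[r,R]$ will force any positive jump of $N$ on the inner interval $[s,S]$ to be either absorbed into the full change $N(R,p,v)-N(r,p,v)$ or paid for by the $\alpha$-H\"older error term.

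More explicitly, since Lemma \ref{N non-degenerate almost monotonicity} gives (discarding the non-negative $N'_1$ integral on the left)
\begin{equation*}
N(b,p,v)-N(a,p,v) \;\ge\; -\,C_1\|\ln(h)\|_\alpha\, b^{\alpha}
\end{equation*}
for any $0\le a<b\le 1$, I would apply this once with $(a,b)=(r,s)$ and once with $(a,b)=(S,R)$, using in both cases $b^\alpha\le R^\alpha$, to get
\begin{equation*}
N(s,p,v)-N(r,p,v) \;\ge\; -C_1\|\ln(h)\|_\alpha R^\alpha,
\qquad
N(R,p,v)-N(S,p,v) \;\ge\; -C_1\|\ln(h)\|_\alpha R^\alpha.
\end{equation*}

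Then the trivial telescoping
\begin{equation*}
N(R,p,v)-N(r,p,v) \;=\; \bigl[N(R,p,v)-N(S,p,v)\bigr] + \bigl[N(S,p,v)-N(s,p,v)\bigr] + \bigl[N(s,p,v)-N(r,p,v)\bigr]
\end{equation*}
combined with the two lower bounds above yields
\begin{equation*}
N(S,p,v)-N(s,p,v) \;\le\; N(R,p,v)-N(r,p,v) + 2C_1\|\ln(h)\|_\alpha R^\alpha,
\end{equation*}
and bounding the right-hand side by its absolute value gives the claimed inequality.

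There is no real obstacle here: the almost-monotonicity statement of the previous lemma already does all the analytic work, and this lemma is essentially a bookkeeping corollary that packages two applications of almost-monotonicity on the flanking intervals. The one small point worth being careful about is that Lemma \ref{N non-degenerate almost monotonicity} is stated for $0\le s<S\le 1$, so we should note that the hypothesis $0\le r\le s<S\le R\le 1$ (implicit from the ambient normalization, since earlier lemmata in this section take $R\le 1$) makes both applications legitimate; the degenerate case $r=s$ or $S=R$ is trivial since the corresponding bracketed difference vanishes.
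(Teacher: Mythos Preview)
Your proof is correct and is essentially the same almost-monotonicity argument as the paper's: the paper works at the level of the decomposition $N'=N_1'+N_2'$ (extending the domain of integration from $[s,S]$ to $[r,R]$ using $N_1'\ge 0$ and then bounding $\int N_1'$ by $|N(R)-N(r)|+\int|N_2'|$), while you apply the already-packaged almost-monotonicity of Lemma~\ref{N non-degenerate almost monotonicity} on the two flanking intervals and telescope. Your version is slightly cleaner since it avoids re-opening the derivative decomposition and the attendant mollification issues.
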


\begin{proof}
This is essentially a ``rays of the sun" argument.  To wit
\begin{align*}
N(Q, s, v) - N(Q, s, v) = & \int_s^S N'_1(Q, \rho, v) + N'_2(Q, \rho, v) d\rho\\
\le & \int_s^S N'_1(Q, \rho, v) + |N'_2(Q, \rho, v)| d\rho\\
\le & \int_r^R N'_1(Q, \rho, v) + |N'_2(Q, \rho, v)| d\rho\\
\le & 2\int_r^R |N'_2(Q, \rho, v) |d\rho + |N(Q, R, v) - N(Q, r, v)|.
\end{align*}
The bounds in Remark \ref{N- remark} give the desired statement.
\end{proof} 

\section{Uniform Bound on the Almgren frequency}\label{S:N bound}

We now turn our attention to proving some analogs of classical harmonic results for the Almgren frequency function for functions $v \in \mathcal{A}(\Lambda, \alpha, M_0)$.  The main result of this section is Lemma \ref{N bound lem}, which states that we may bound the Almgren frequency uniformly for all points $Q \in B_{1/4}(0) \cap \partial \Omega^{\pm}$ and all scales $0<r\le 1/2$ by a function of $N(0, 1, v) \le \Lambda.$

\begin{lem} \label{H e doubling-ish lem}($H(r, p ,v_{\epsilon})$ is almost doubling.)
Let $v \in \mathcal{A}(\Lambda, \alpha, M_0)$ with $||\ln(h)||_{\alpha} \le \Gamma$, and let $Q \in B_{1/2}(0) \cap \partial \Omega^{\pm}$. For any $0 < s < S \le 1$ if $\epsilon < < s$ is sufficiently small
\begin{equation}
H(Q, s, v_{\epsilon}) \le \left(\frac{S}{s}\right)^{(n-1) + 2(N(Q, S, v_{\epsilon}) + CS^{\alpha})}e^{\frac{2C}{\alpha}[S^{\alpha} - s^{\alpha}]} H(Q, s, v_{\epsilon}),
\end{equation}
where $C=||\ln(h) ||_{\alpha} C_1(M_0, \alpha, \Gamma)$.
\end{lem}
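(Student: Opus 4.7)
\medskip

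\noindent\textbf{Proof Proposal.} The plan is to mimic the classical harmonic proof of \eqref{H doubling} by controlling $\frac{d}{dr}\ln H(r,p,v_\epsilon)$ and then integrating, with the two sources of non-harmonicity absorbed via the estimates already proved in Sections~\ref{S:compactness}--\ref{S:monotonicity}.

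First, I would write $H(r,p,v_\epsilon)=r^{n-1}\int_{\partial B_1(0)}(v_\epsilon(p+ry)-v_\epsilon(p))^2 d\sigma(y)$ by rescaling and differentiate directly in $r$. This gives
\begin{equation*}
\tfrac{d}{dr}H(r,p,v_\epsilon)=\tfrac{n-1}{r}H(r,p,v_\epsilon)+2\int_{\partial B_r(p)}(v_\epsilon-v_\epsilon(p))(v_\epsilon)_\nu\, d\sigma.
\end{equation*}
Integration by parts on the boundary integral yields $D(r,p,v_\epsilon)+\int_{B_r(p)}(v_\epsilon-v_\epsilon(p))\Delta v_\epsilon\, dx$, so dividing through by $H(r,p,v_\epsilon)$ and recalling $N=rD/H$,
\begin{equation*}
\tfrac{d}{dr}\ln H(r,p,v_\epsilon)=\tfrac{n-1}{r}+\tfrac{2N(r,p,v_\epsilon)}{r}+\tfrac{2\int_{B_r(p)}(v_\epsilon-v_\epsilon(p))\Delta v_\epsilon\,dx}{H(r,p,v_\epsilon)}.
\end{equation*}
This is the classical harmonic identity plus an error term which, by Remark~\ref{division remark} (which in turn bundles Lemma~\ref{N neg 1} with the lower bound in Lemma~\ref{H lower bound lem}), is dominated in absolute value by $2C\|\ln h\|_\alpha r^{\alpha-1}$ for a constant $C=C(M_0,\alpha,\Gamma)$ provided $\epsilon$ is chosen small compared to $s$.

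Next I would replace the scale-dependent $N(r,p,v_\epsilon)$ appearing in the main term by the value at the top scale. By the almost-monotonicity estimate Lemma~\ref{N non-degenerate almost monotonicity} (applied on $[r,S]$, dropping the nonnegative $N'_1$ contribution on the right), one has $N(r,p,v_\epsilon)\le N(S,p,v_\epsilon)+C_1\|\ln h\|_\alpha S^\alpha$ for every $r\in[s,S]$. Substituting into the differential inequality gives
\begin{equation*}
\tfrac{d}{dr}\ln H(r,p,v_\epsilon)\le \tfrac{(n-1)+2(N(S,p,v_\epsilon)+C_1\|\ln h\|_\alpha S^\alpha)}{r}+2C\|\ln h\|_\alpha r^{\alpha-1}.
\end{equation*}

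Finally, integrating from $s$ to $S$ produces the logarithmic factor $((n-1)+2(N(S,p,v_\epsilon)+C_1\|\ln h\|_\alpha S^\alpha))\ln(S/s)$ from the first term and the factor $\tfrac{2C\|\ln h\|_\alpha}{\alpha}(S^\alpha-s^\alpha)$ from the second term, and exponentiating delivers the stated inequality with the constant $C$ absorbing $C_1$ and the prefactor from Remark~\ref{division remark}. No step poses a real obstacle since all the hard work (compactness, the lower bound on $H$, the Laplacian error estimate, and the almost-monotonicity of $N$) has already been performed in the preceding sections; the only subtlety is the requirement $p\in\overline{\Omega^-}$, which is exactly what is needed so that Lemmata~\ref{H lower bound lem} and~\ref{N neg 1} apply, and which is built into the hypothesis of the lemma.
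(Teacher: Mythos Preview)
Your proposal is correct and follows essentially the same route as the paper: compute $\frac{d}{dr}\ln H(r,p,v_\epsilon)=\frac{n-1}{r}+\frac{2}{r}N(r,p,v_\epsilon)+2\frac{\int_{B_r}(v_\epsilon-v_\epsilon(p))\Delta v_\epsilon}{H}$, bound the last term by Remark~\ref{division remark} and replace $N(r,p,v_\epsilon)$ by $N(S,p,v_\epsilon)+CS^\alpha$ via the almost-monotonicity Lemma~\ref{N non-degenerate almost monotonicity}, then integrate from $s$ to $S$ and exponentiate. The paper's argument is identical in structure and in the lemmata invoked.
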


\begin{proof}
First, observe that 
$$H'(Q, r, v_{\epsilon}) = \frac{n-1}{r}\int_{\partial B_r(Q)}|v_{\epsilon}|^2d\sigma + 2 \int_{B_r(Q)} |\nabla v_{\epsilon}|^2dV +2\int_{B_r(Q)}v_{\epsilon}\Delta v_{\epsilon}dV.$$

Next, we consider the following identity
\begin{align*}
\ln(\frac{H(Q, S, v_{\epsilon})}{H(Q, s, v_{\epsilon})}) & =  \ln(H(Q, S, v_{\epsilon})) - \ln(H(Q, s, v_{\epsilon}))\\
& =  \int_s^S \frac{H'(Q, r, v_{\epsilon})}{H(Q, r, v_{\epsilon})}dr\\
 & =  \int_s^S \frac{n-1}{r} + \frac{2}{r}N(Q, r, v_{\epsilon}) + 2\left(\frac{\int_{B_r(Q)}v_{\epsilon}\Delta v_{\epsilon}dV}{\int_{\partial B_r(Q)} (v_{\epsilon})^2d\sigma}\right)dr.
\end{align*}
 
We bound $N(r, p, v_{\epsilon})$ from above using Lemma \ref{N non-degenerate almost monotonicity}.  We bound the last term using Lemma \ref{N neg 1}.  Plugging in these bounds, we have that for $\epsilon << s$
\begin{align*}
\ln\left(\frac{H(Q, S, v_{\epsilon})}{H(Q, s, v_{\epsilon})}\right) & \le \left[(n-1) + 2(N(Q, s, v_{\epsilon}) + CS^{\alpha})\right] \ln(r)|^{S}_s + \frac{2C}{\alpha} r^{\alpha}|^S_s.
\end{align*}
Evaluating and exponentiating gives the desired result.
\end{proof}

\begin{rmk}\label{H doubling-ish remark}
Because $H(Q, r, v_{\epsilon}) \rightarrow H(Q, r, v)$ as $\epsilon \rightarrow 0$ and $N(Q, r, v_{\epsilon}) \rightarrow N(Q, r, v)$ as $\epsilon \rightarrow 0$ (a consequence of Corollary \ref{v_e to v}), we have the following inequality.  For all $v \in \mathcal{A}(\Lambda, \alpha, M_0)$ with $||\ln(h)||_{\alpha}\le \Gamma$, $Q \in B_{\frac{1}{2}}(0) \cap \partial \Omega^{\pm}$, and $0 < s < S \le 1$ 
\begin{align}
H(Q, S, v) \le& \left(\frac{S}{s}\right)^{(n-1) + 2(N(Q, S, v) + CS^{\alpha})}e^{\frac{2C}{\alpha}[S^{\alpha} - s^{\alpha}]} H(Q, s, v).
\end{align}
\end{rmk}
 
Now, we bound the Almgren frequency.

\begin{lem}\label{N bound lem}
Let $v \in \mathcal{A}(\Lambda, \alpha, M_0)$ as above. There is a function, $C(n, \alpha, \Gamma, \Lambda, M_0)$ such that if $||\ln(h)||_{\alpha}\le \Gamma$, then for all $Q \in B_{\frac{1}{4}}(0) \cap \partial \Omega^{\pm}$ and all $r \in (0, \frac{1}{2})$ 
\begin{equation}
 N(Q, r, v)  \le  C(n, \Lambda, \alpha, M_0, \Gamma).
\end{equation}
\end{lem}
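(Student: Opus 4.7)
The plan is to reduce the bound on $N(r, p, v)$ at a small scale $r$ to a bound at a fixed larger scale via almost monotonicity, and then to estimate the latter directly.  Since $N(\cdot,\cdot,v)$ is invariant under $v \mapsto cv$, I first normalize $v$ by a positive scalar so that $\omega^-(B_1(0)) = 1$.  With this normalization, Corollary \ref{TBE combo cor}(1), applied with $p = Q = 0$ and $r = 1$, shows that $v = v_{0,1}$ is uniformly $L$-Lipschitz on $B_2(0)$ with $L = L(M_0,\Gamma,\alpha)$.

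Applying Lemma \ref{N non-degenerate almost monotonicity} with $s = r$ and $S = 3/4$ gives, for every $p \in B_{1/4}(0) \cap \overline{\Omega^-}$ and every $r \in (0, 1/2]$,
$$N(r, p, v) \le N(3/4, p, v) + C_1\Gamma (3/4)^\alpha,$$
so it suffices to bound $N(3/4, p, v) = (3/4)\,D(3/4, p, v)/H(3/4, p, v)$ uniformly in $p$.  The Lipschitz bound yields $D(3/4, p, v) \le C(n)\,L^2$ immediately, so the task reduces to producing a uniform lower bound on $H(3/4, p, v)$.

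For this, let $Q_p \in \partial\Omega^{\pm}$ realize $d := dist(p, \partial\Omega^{\pm})$; since $0 \in \partial\Omega^{\pm}$, we have $d \le |p| \le 1/4 < 3/4$, so Lemma \ref{H lower bound lem} applies and gives
$$H(3/4, p, v) \ge C(M_0,n)\,\frac{\omega^-(B_{3/4}(Q_p))^2}{(3/4)^{n-3}}.$$
The triangle inequality gives $|Q_p| \le |p| + d \le 1/2$, and hence $B_{3/2}(Q_p) \supset B_1(0)$.  By doubling of harmonic measure on unbounded NTA domains (with constant $K = K(M_0)$),
$$\omega^-(B_{3/4}(Q_p)) \ge K^{-1}\omega^-(B_{3/2}(Q_p)) \ge K^{-1}\omega^-(B_1(0)) = K^{-1},$$
so $H(3/4, p, v) \ge c(M_0, n) > 0$.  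Combining these estimates gives $N(3/4, p, v) \le C(n, M_0, \Gamma, \alpha)$, and together with the preceding display the lemma follows.

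The main obstacle is the uniform lower bound on $H(3/4, p, v)$: the Lipschitz control on $v$ furnishes the upper bound on $D(3/4, p, v)$ effortlessly, but $H(3/4, p, v)$ could a priori degenerate as $p$ approaches $\partial\Omega^{\pm}$ (since $v(p) \to 0$ while the ball sees mostly the zero set of $v$).  The role of Lemma \ref{H lower bound lem} is precisely to convert this into a bound on the harmonic measure of a ball centered on the boundary near $p$, which one then controls uniformly in $p$ via doubling of $\omega^-$ and the chosen normalization; the scale $3/4$ is chosen large enough that this transfer to the fixed scale $1$ costs only a constant depending on $M_0$.
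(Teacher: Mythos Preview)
Your argument is correct and considerably more direct than the paper's. Both proofs begin with the same normalization $v = v_{0,1}$ and both end by invoking almost monotonicity (Lemma \ref{N non-degenerate almost monotonicity}) to pass from a fixed scale down to all $r \in (0,1/2)$. The difference is in how the fixed-scale bound is obtained. You estimate $N(3/4,p,v)$ directly by bounding numerator and denominator separately: the uniform Lipschitz bound of Corollary \ref{TBE combo cor} controls $D(3/4,p,v)$, and Lemma \ref{H lower bound lem} combined with doubling of $\omega^-$ gives the lower bound on $H(3/4,p,v)$. The paper instead uses the almost-doubling of $H$ at the origin (Remark \ref{H doubling-ish remark}), transfers this to a doubling-type inequality for $\int |v - v(p)|^2$ on balls centered at $p$ via a chain of containments and the comparison of Lemma \ref{H comparable rmk}, and finally extracts the frequency bound from the logarithmic derivative of $s^{1-n}H(s,p,v)$.

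Your route buys a cleaner proof and, notably, a bound that does not depend on $\Lambda$ at all: once $\|\ln h\|_\alpha \le \Gamma$, the constant depends only on $n, M_0, \Gamma, \alpha$. The paper's argument, by running the doubling through $N(1,0,v) \le \Lambda$, produces a $\Lambda$-dependent constant, and is closer in spirit to the classical proof for harmonic functions (where one genuinely needs the frequency at the larger ball to control the frequency at nearby points). Your observation that the NTA-based Lipschitz bound of Corollary \ref{TBE combo cor} already absorbs this dependence is the point that shortens the proof.
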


\begin{proof}
We recall that the Almgren frequency function is invariant under rescalings of the function $v$.  Therefore, we normalize our function $v$ by the rescaling $v_{0,1}$ and relabel as $v$.  As remarked in Theorem \ref{TBE combo}, $v$ is now uniformly locally Lipschitz with a Lipschitz constant which only depends upon the NTA constant $M_0$.  Therefore, applying Remark \ref{H doubling-ish remark} to $Q = 0$ (hence $v(Q) = 0$), letting $r = cR$, and integrating both sides with respect to $R$ from $0$ to $S$, we have that for any $c \in (0, 1)$
\begin{align*}
\int_0^S\int_{\partial B_{R}(0)}|v|^2 d\sigma dR &\le \int_0^{S} (\frac{1}{c})^{(n-1) + 2(N(0, R, v) + CR^{\alpha})}e^{\frac{2C}{\alpha}[R^{\alpha} - (cR)^{\alpha}]} \int_{\partial B_{cR}(0)}|v|^2d\sigma dR\\
& \le (\frac{1}{c})^{(n-1) + 2(N(0, S, v) + 2CS^{\alpha})}e^{\frac{2C}{\alpha}S^{\alpha}} \int_0^{S}  \int_{\partial B_{cR}(0)}|v|^2d\sigma dR.
\end{align*}

Thus, we have that for any such $c \in (0,1)$ and any $0< S \le 1$
\begin{align*}
\fint_{B_S(0)}|v|^2dV \le c (\frac{1}{c})^{2[N(0, S, v) + 2CS^{\alpha}]}\cdot e^{\frac{2C}{\alpha}S^{\alpha}} \fint_{B_{cS}(0)} |v|^2dV.
\end{align*}

Let $S = 1$ and $c= \frac{1}{16}$.  We have that
\begin{align}\label{H almost doubling balls}
\fint_{B_1(0)}|v|^2dV \le \frac{1}{16} (16)^{2[N(0, 1, v) + C]}\cdot e^{\frac{2C}{\alpha}} \fint_{B_{\frac{1}{16}}(0)} |v|^2dV.
\end{align}

We now underestimate the left-hand side of Equation (\ref{H almost doubling balls}).  For any $Q \in B_{\frac{1}{4}}(0) \cap \partial \Omega^{\pm}$
\begin{align*}
\fint_{B_1(0)}|v|^2dV & \ge c(n) \fint_{B_{\frac{3}{4}}(p)}|v|^2dV \\
& \ge c(n) \fint_{B_{\frac{3}{4}}(Q) \setminus B_{\frac{1}{4}}(Q)}|v|^2dV.
\end{align*}

Now, we overestimate the right-hand side of Equation \ref{H almost doubling balls}. For any $Q \in B_{\frac{1}{4}}(0) \cap \partial \Omega^{\pm}$
\begin{align*}
\fint_{B_{\frac{1}{16}}(0)}|v|^2dV & \le c(n) \fint_{B_{\frac{9}{16}}(Q)}|v|^2dV \\
& \le c(n) \left(\int_{B_{\frac{9}{16}}(Q) \setminus B_{\frac{1}{4}}(Q)}|v|^2dV +  \int_{B_{\frac{1}{4}}(Q)}|v|^2dV\right)\end{align*}

Now, let $x^-_{max}(Q, 9/16) \in \partial B_{\frac{9}{16}}(Q) \cap \overline{\Omega^-}$ be the point which maximizes $|v|$ on $\partial B_{\frac{9}{16}}(Q) \cap \overline{\Omega^{-}}.$ Note that $|v(x^-_{max}(Q, 9/16))| \ge |v(A^-_{\frac{9}{16}}(Q))|$, and by Harnack chains between $A^-_{\frac{9}{16}}(Q)$ and $A^-_{1}(0)$, we have $|v(x^-_{max}(Q, 9/16))| \sim_{M_0} 1.$  By a similar argument applied to $x^+_{max}(Q, 9/16)$, recalling Remark \ref{r: omega comparable} we have a similar statement for $\Omega^+.$  Thus, by the uniform Lipschitz control on $v$, we can find a balls of radius $c(n, \alpha, M_0, \Gamma) \frac{1}{4}$ in $\Omega^{\pm} \cap B_{\frac{9}{16}}(Q)\setminus B_{\frac{1}{4}}(Q)$ respectively in which $|v| \ge C(n, \alpha, M_0, \Gamma)\frac{1}{2}$.  Thus, we have 
\begin{align*}
    \int_{B_{\frac{9}{16}}(Q) \setminus B_{\frac{1}{4}}(Q)}|v|^2dV \ge C(n, \alpha, M_0, \Gamma).
\end{align*}
Similarly, by the Maximum principle, we have that $|v| \le |v(x^-_{max}(Q, 9/16))|$ in $\Omega^- \cap B_{\frac{1}{4}}(Q)$.  By Remark \ref{r: omega comparable}, and an identical argument for $\Omega^+ \cap B_{\frac{1}{4}}(Q),$ then 
\begin{align*}
    \int_{B_{\frac{1}{4}}(Q) \setminus B_{\frac{1}{4}}(Q)}|v|^2dV \le \frac{1}{4^n} C(n, \alpha, M_0, \Gamma).
\end{align*}
Thus, there is a constant $C(n, \alpha, M_0, \Gamma)$ such that 
\begin{align*}
    C(n, \alpha, M_0, \Gamma)\int_{B_{\frac{9}{16}}(Q) \setminus B_{\frac{1}{4}}(Q)}|v|^2dV  \ge  \int_{B_{\frac{1}{4}}(Q)}|v|^2dV.
\end{align*}

Putting together these under- and over- estimates
\begin{equation}\label{N bound base ineq}
\fint_{B_{\frac{3}{4}}(Q)}|v|^2dV \le C(n, \alpha,  M_0, \Gamma) (16)^{2[N(0, 1, v) + C]}\cdot e^{\frac{2C}{\alpha}} \fint_{B_{\frac{9}{16}}(Q) \setminus B_{\frac{1}{4}}(Q)} |v|^2dV.
\end{equation}

Now, we wish to bound $\fint_{B_{\frac{3}{4}}(Q)}|v|^2dV$ from below and $\fint_{B_{\frac{9}{16}}(Q) \setminus B_{\frac{1}{4}}(Q)} |v|^2dV$ from above.  To get the lower bound, we recall that
\begin{align*}
\frac{d}{dr} \int_{\partial B_r(Q)} |v_{\epsilon}|^2 dV & = \frac{n-1}{r}\int_{\partial B_r(Q)} |v_{\epsilon}|^2d\sigma + 2\int_{\partial B_r(Q)} v_{\epsilon}(v_{\epsilon})_{\nu} d\sigma.
\end{align*}

Since the first integrand on the right hand side is positive, we need only bound the second integral.  Recall that $v = v_{0,1}$ is Lipschitz with a Lipschitz constant that only depends upon the NTA constant $M_0$.  Then, for all $0< r$ and $\epsilon << r$
\begin{align*}
\frac{d}{dr} \int_{\partial B_r(Q)} |v_{\epsilon}|^2d\sigma & \ge - 2Cr^{n+1}.
\end{align*}
Thus, for any $Q \in B_{\frac{1}{4}}(0) \cap \partial \Omega^{\pm}$, and $0<s<S<1$ we obtain
\begin{align*}
\int_{\partial B_{S}(Q)} |v_{\epsilon}|^2d\sigma - \int_{\partial B_{s}(Q)} |v_{\epsilon}|^2d\sigma & = \int_{s}^{S} \frac{d}{dr} \int_{\partial B_r(Q)} |v_{\epsilon}|^2d\sigma dr \\
& \ge  \int_{s}^{S} - 2Cr^{n+1} dr\\
& \ge  -C(M_0, n)S^{n+2}.
\end{align*}
Thus, for all $Q \in B_{\frac{1}{4}}(0) \cap \partial \Omega^{\pm}$, we may bound $\int_{B_{\frac{3}{4}}(Q)}(v_{\epsilon})^2dV$ from below as follows.
\begin{align*}
\int_{B_{\frac{3}{4}}(Q)}|v_{\epsilon}|^2 dV & \ge \int_{B_{\frac{3}{4}}(Q) \setminus B_{\frac{5}{8}}(Q)} |v_{\epsilon}|^2 dV\\
& = \int_{\frac{5}{8}}^{\frac{3}{4}} \int_{\partial B_r(Q)} |v_{\epsilon}|^2 d\sigma dr \\
& \ge  \int_{\frac{5}{8}}^{\frac{3}{4}} \left(\int_{\partial B_{\frac{5}{8}}(Q)} |v_{\epsilon}|^2 d\sigma  -C(M_0, n)\right)dr \\
& \ge  c \int_{\partial B_{\frac{5}{8}}(Q)} |v_{\epsilon}|^2 d\sigma  -C(M_0, n).
\end{align*}

To get the upper bound we want, we use the same trick. 
\begin{align*}
\int_{B_{\frac{9}{16}}(Q)\setminus B_{\frac{1}{4}}(Q)}|v_{\epsilon}|^2dV & = \int_{\frac{1}{4}}^{\frac{9}{16}} \int_{\partial B_s(Q)}|v_{\epsilon}|^2 d\sigma dr\\
& \le \int_{\frac{1}{4}}^{\frac{9}{16}} \left(\int_{\partial B_{\frac{9}{16}}(Q)}|v_{\epsilon}|^2 d\sigma + C(M_0, n)\right) dr\\
& \le c\int_{\partial B_{\frac{9}{16}} (Q)} |v_{\epsilon}|^2d\sigma +  C(M_0, n).
\end{align*}

Since these inequalities hold for all $\epsilon << 1/2$, and by Lemmata \ref{Lipschitz} and \ref{v_e to v} $$\lim_{\epsilon \rightarrow 0}H(Q, r, v_{\epsilon}) = H(Q, r, v),$$ they also hold for $v$.  Putting it all together, we plug our above bounds into Equation \ref{N bound base ineq} and consolidating constants, we obtain the following for all $Q \in B_{\frac{1}{4}}(0) \cap \partial \Omega^{\pm}$
\begin{align*}
& \fint_{\partial B_{\frac{5}{8}}(Q)} |v|^2 d\sigma  - C(M_0, n) \le \\
& \qquad C(n, \alpha, M_0, \Gamma) (16)^{2[N(0, 1, v) + C]}\left(\fint_{\partial B_{\frac{9}{16}} (Q)} |v|^2d\sigma
+ C(M_0, n)\right).
\end{align*}
Though somewhat messier, we restate the above in the following form for convenience later.
\begin{align} \nonumber \label{ugly}
\frac{\fint_{\partial B_{\frac{5}{8}}(Q)} |v|^2 d\sigma}{\fint_{\partial B_{\frac{9}{16}}(Q)}|v|^2 d\sigma} \le & C(n, \alpha,  M_0,\Gamma) (16)^{2[N(0, 1, v) + C]} + \\
&  \frac{C(n, \alpha, M_0, \Gamma)(16)^{2[N(0, 1, v) + C]} + C(M_0, n)}{\fint_{\partial B_{\frac{9}{16}}(Q)}|v|^2 d\sigma}.
\end{align}

Now, we change tack.  Observe that
\begin{align*}
& \frac{d}{ds}\ln\left(\frac{1}{s^{n-1}}H(Q, s, v_{\epsilon})\right) \\
& \qquad = \frac{1}{\fint_{\partial B_s (Q)}(v_{\epsilon})^2d\sigma} \left[\frac{(1-n)}{s} \fint_{\partial B_s (Q)}(v_{\epsilon})^2d\sigma + \frac{1}{s^{n-1}}H'(Q, s, v_{\epsilon})\right]\\
& \qquad =  \frac{1}{\fint_{B_{\partial B_s (Q)}}(v_{\epsilon})^2d\sigma} \left[2s \fint_{B_s(Q)} |\nabla v_{\epsilon}|^2dV +2s \fint_{B_s(Q)} v_{\epsilon} \Delta v_{\epsilon}dV\right]\\
& \qquad =  \frac{2}{s}\left[N(Q, s, v_{\epsilon}) + 2 \left(\frac{\int_{B_s(Q)} v_{\epsilon}\Delta v_{\epsilon}dV}{\int_{\partial B_s(Q)}(v_{\epsilon})^2d\sigma}\right)\right].
\end{align*}

Again, we wish to bound the absolute value of the negative part of the derivative.  This amounts to bounding the last term.  By Lemma \ref{N neg 1}, for all $Q \in B_{\frac{1}{4}}(0) \cap \partial \Omega^{\pm}$, $0< s$, and $\epsilon << s$, we have
\begin{align*}
& \ln(\fint_{\partial B_{\frac{5}{8}}(Q)} (v_{\epsilon})^2) - \ln(\fint_{\partial B_{\frac{9}{16}}(Q)} (v_{\epsilon})^2) \\
&\qquad  = \int_{\frac{1}{2}}^{\frac{9}{16}} \frac{d}{ds}\ln\left(\frac{1}{s^{n-1}}H(Q, s, v_{\epsilon})\right) ds\\
& \qquad \ge  \int_{\frac{1}{2}}^{\frac{9}{16}} \frac{2}{s}N(Q, s, v_{\epsilon}) - 2C s^{\alpha -1} ds\\
& \qquad \ge  2\left[N(Q, \frac{1}{2}, v_{\epsilon}) - C(\frac{5}{8})^{\alpha}\right]\ln(s)|^{\frac{5}{8}}_{\frac{9}{16}} - \frac{2C}{\alpha}s^{\alpha}|^{\frac{5}{8}}_{\frac{9}{16}}\\
&\qquad  \ge  2c\left[N(Q, \frac{1}{2}, v_{\epsilon}) - C\right] - \frac{2C}{\alpha}.
\end{align*}

Since by Lemmata \ref{Lipschitz} and \ref{v_e to v} $\lim_{\epsilon \rightarrow 0}H(Q, r, v_{\epsilon}) = H(Q, r, v)$ and $\lim_{\epsilon \rightarrow 0}N(Q, r, v_{\epsilon}) = N(Q, r, v)$, and the above estimates hold for all $\epsilon << 1/2$,  the estimates hold for $v,$ as well.  Thus, by Equation (\ref{ugly}) 
\begin{align*}
 & 2c\left[N(Q, \frac{1}{2}, v) - C\right] - \frac{2C}{\alpha} \\
 & \qquad \le  \ln\left(\frac{\fint_{\partial B_{\frac{5}{8}}(Q)} (v)^2d\sigma}{\fint_{\partial B_{\frac{9}{16}}(Q)} (v)^2d\sigma}\right)\\
 & \qquad \le \ln\left(C(n, \alpha, M_0, \Gamma) (16)^{2[N(0, 1, v) + C]} +  \frac{C(n, \alpha, M_0, \Gamma)(16)^{2[N(0, 1, v) + C]} + C(M_0, n)}{\fint_{\partial B_{\frac{9}{16}}(Q)}(v)^2 d\sigma}\right). 
\end{align*}

Now, we clean up the terms inside the logarithm. By our choice of normalization $v = v_{0, 1}$ and Lemma \ref{N neg 1}, we have that $\fint_{\partial B_{\frac{9}{16}}(Q)}(v)^2 d\sigma > c(M_0).$  Thus, we have that
\begin{align*}
\frac{C(n, \alpha,  M_0, \Gamma)(16)^{2[N(0, 1, v) + C]} + C(M_0, n)}{\fint_{\partial B_{\frac{9}{16}}(Q)}(v)^2 d\sigma} & \le
C(n, \alpha, M_0, \Gamma)\left((16)^{2[N(0, 1, v) + C]} + 1\right).
\end{align*}
Isolating for $N(Q, \frac{1}{2}, v)$, now, we have
\begin{align*}
N(Q, \frac{1}{2}, v) \le & \ln\left(C(n, \alpha, M_0, \Gamma) (16)^{2[N(0, 1, v) + C]} + 1)\right) + \frac{C}{\alpha}\\
\le & C(n, \Lambda, \alpha, M_0, \Gamma).
\end{align*}

Now, Lemma \ref{N non-degenerate almost monotonicity}, gives that for $0< a < 1/2$, if $\epsilon << s$, then 
\begin{equation}
N(Q, \frac{1}{2}, v_{\epsilon}) + C(1/2)^{\alpha} >  N(Q, s, v_{\epsilon}).
\end{equation}
Thus, again taking limits as $\epsilon \rightarrow 0$, $N(Q, s, v)<  C(n, \Lambda, \alpha, M_0, \Gamma) + C$ for all $0< s < 1/2$.
\end{proof}

\section{Quantitative Rigidity}\label{S: quant rigidity}

Throughout the rest of the paper, we shall need to use limit-compactness arguments.  The key will be that as $||\ln(h)||_{\alpha} \rightarrow 0$, $v \rightarrow u$ for some harmonic function $u.$  We make this rigorous in the following lemma.

\begin{lem}\label{better compactness lem}(Convergence to Harmonic Functions)
Let $v_i \in \mathcal{A}(\Lambda, \alpha, M_0)$ with $||\ln(h_i) ||_{\alpha} \rightarrow 0$.  Assume that  $Q_i \in B_\frac{1}{4}(0) \cap \partial \Omega^{\pm}$ and $\{r_i\} \subset (0, 1/2]$.  Then, there exists a function $v_{\infty}$ and a subsequence $v_j$ such that $T_{Q_j, r_j}v_j \rightarrow v_{\infty}$ in the senses of Lemma \ref{compactness-ish I} and $v_{\infty}$ is harmonic.
\end{lem}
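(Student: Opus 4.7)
The plan is to apply Lemma \ref{compactness-ish II} to pass to a subsequence along which $T_{p_j, r_j}v_j \to v_\infty$ in $C_{loc}(\RR^n)$ and $W^{1,2}_{loc}(\RR^n)$ for some Lipschitz function $v_\infty$, and then to show that $v_\infty$ is weakly harmonic; Weyl's lemma (or direct elliptic regularity, since $v_\infty$ is Lipschitz) then upgrades weak harmonicity to classical harmonicity. All the content is therefore in killing the distributional Laplacian of the rescaled functions in the limit, using $\|\ln h_j\|_\alpha \to 0$.

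The starting identity is that since $h_j(0) = 1$ by definition of $\mathcal{A}(\Lambda, \alpha, M_0)$ and $d\omega_j^- = h_j\,d\omega_j^+$,
$$\Delta v_j \;=\; h_j(0)\omega_j^+ - \omega_j^- \;=\; (1 - h_j)\omega_j^+$$
as a distributional signed measure supported on $\partial \Omega_j^{\pm}$. For any $\phi \in C_c^\infty(\RR^n)$ with $\text{spt}(\phi) \subset B_R(0)$, a change of variables in the definition of $T_{p_j, r_j}$ produces
$$\int \phi \, d(\Delta T_{p_j, r_j} v_j) \;=\; \frac{1}{c_j\, r_j^{n-2}} \int_{B_{r_j R}(p_j)} \phi\!\left(\tfrac{x - p_j}{r_j}\right)(1-h_j(x))\,d\omega_j^+(x),$$
where $c_j = \bigl(\fint_{\partial B_1(0)}(v_j(p_j + r_j y) - v_j(p_j))^2 \,dy\bigr)^{1/2}$ is the normalizing constant from Definition \ref{L2 rescaling def}. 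The integrand factor $(1-h_j)$ is uniformly small by Hölder continuity and $h_j(0)=1$: $|1-h_j(x)| \leq C(R,\Gamma)\|\ln h_j\|_\alpha$ on $B_{r_j R}(p_j)$, which tends to $0$.

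The main technical step is to bound the prefactor $c_j^{-1} r_j^{2-n}\omega_j^+(B_{r_j R}(p_j))$ uniformly in $j$. I would split into three regimes according to $d_j := \text{dist}(p_j, \partial \Omega_j^{\pm})$. If $d_j > r_j R$, then $\omega_j^+$ puts no mass in $B_{r_j R}(p_j)$ and the integral is trivially zero. If $d_j \leq r_j$, Lemma \ref{H lower bound lem} directly gives $c_j \geq C\omega_j^-(B_{r_j}(Q_j))/r_j^{n-2}$ for $Q_j$ realizing the distance to $\partial \Omega_j^{\pm}$, and NTA doubling of harmonic measure together with the $\omega^+/\omega^-$-comparability of Remark \ref{r: omega comparable} bounds the prefactor by $C(R, M_0, \Gamma)$. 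In the intermediate regime $r_j < d_j \leq r_j R$, $v_j$ is harmonic on $B_{d_j}(p_j)$, so one applies Lemma \ref{H lower bound lem} at scale $d_j$ and uses the classical harmonic monotonicity of $H(\cdot, p_j, v_j)/r^{n-1+2N}$ to descend to scale $r_j$; the uniform Almgren-frequency bound of Lemma \ref{N bound lem} controls the resulting exponent, and the analogous doubling-plus-comparability argument concludes. Combining these, $\int \phi \, d(\Delta T_{p_j, r_j}v_j) \to 0$, and since $T_{p_j, r_j}v_j \to v_\infty$ in $W^{1,2}_{loc}$, passing to the limit in the distributional pairing gives $\int \phi \, d(\Delta v_\infty) = 0$ for every $\phi \in C_c^\infty$, so $v_\infty$ is harmonic. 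The case $p_j \in \overline{\Omega_j^+}$ is handled by applying the above to $\tilde v_j = -h_j(0)^{-1}v_j$ via Remark \ref{+/- sym}. The principal obstacle is precisely the intermediate regime, where the natural NTA scale $d_j$ and the scale $r_j$ appearing in the rescaling disagree, forcing the combined use of Lemma \ref{H lower bound lem} and Lemma \ref{N bound lem}.
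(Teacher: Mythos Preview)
Your approach is correct and reaches the same conclusion, but the execution differs from the paper's in two places worth noting.

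First, the paper mollifies: it shows $\Delta T_{p_j,r_j}v_{j,\epsilon} \rightharpoonup \Delta v_{\infty,\epsilon}$ weakly and separately that $\Delta T_{p_j,r_j}v_{j,\epsilon} \rightharpoonup 0$, concludes $v_{\infty,\epsilon}$ is harmonic, and then passes $\epsilon \to 0$ via the mean value property. You instead work directly at the level of the distributional Laplacian and invoke Weyl's lemma. Both are fine; yours is arguably more direct.

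Second, and more substantively, the paper avoids your three-regime case split entirely. After reducing to $r_j > d_j$ (the harmonic case being trivial), the paper observes that the harmonic measure associated to $T_{p_j,r_j}v_j$ satisfies $T_{p_j,r_j}\omega^- = c_j\,\omega^-_{p_j,r_j}$ for a constant $c_j \le c'(M_0)$; this identity was already implicit in the proof of Lemma~\ref{Lipschitz}, where $T_{p,r}v$ and $v_{p,r}$ were compared. The uniform local bound on $\omega^-_{p_j,r_j}$ then comes directly from Corollary~\ref{TBE combo cor}(2). In contrast, you rebuild this bound from scratch via Lemma~\ref{H lower bound lem}, NTA doubling, Remark~\ref{r: omega comparable}, and (in the intermediate regime) the harmonic doubling inequality~(\ref{H doubling}) combined with the frequency bound of Lemma~\ref{N bound lem}. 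Your argument is correct and self-contained, but the paper's route is shorter because it recycles the comparison between the two rescaling procedures already established in Section~\ref{S:compactness}. What your approach buys is transparency about exactly where each estimate enters; what the paper's approach buys is economy.
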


\begin{proof}
We observe that Lemma \ref{N bound lem} states that $N(0, 1, T_{Q_j, r_j}v_j) \le C$.  Since $T_{Q_j, r_j}v_j,$ are uniformly Lipschitz (Lemma \ref{Lipschitz}), $T_{Q_j, r_j}v_j(0) = 0$, and $N(0, r, T_{Q_j, r_j}v_j) = \int_{B_r(0)}|\nabla T_{Q_j, r_j}v_j|^2dV$, Lemma \ref{N bound lem} states that $\{T_{Q_j, r_j}v_j\}_j$ is a bounded sequence in $W^{1,2}(B_1(0))$.

Lemma \ref{strong compactness I} gives a subsequence which converges strongly in $W^{1, 2}_{loc}(\mathbb{R}^n)$ to a function $v_{\infty}$. We claim that $v_{\infty}$ is harmonic.  To see this, we will investigate the behavior of its mollifications, $v_{\infty, \epsilon} = v_{\infty} \star \phi_{\epsilon}$.  Observe that by Young's Inequality 
$$||T_{Q_j, r_j}v_{j, \epsilon} - v_{\infty, \epsilon} ||_{L^2(B_2(0))} \le ||\phi_{\epsilon}||_{L^1(B_2(0))} ||T_{Q_j, r_j}v_j - v_{\infty}||_{L^2(B_2(0))}.$$ 

Thus, for any $\epsilon > 0$ we have $T_{Q_j, r_j}v_{j, \epsilon} \rightarrow v_{\infty, \epsilon}$ as $j \rightarrow \infty$ strongly in $L^2(B_2(0)).$  By a similar argument applied to $\nabla T_{Q_j, r_j}v_{j, \epsilon}$ we also have that $\nabla T_{Q_j, r_j}v_{j, \epsilon} \rightarrow \nabla v_{\infty, \epsilon}$ in $L^2(B_2(0); \RR^n)$ as $j \rightarrow \infty$.  Furthermore, by our uniform Lipschitz bounds,  $T_{Q_j, r_j}v_{j, \epsilon} \rightarrow v_{\infty, \epsilon}$ as $j \rightarrow \infty$ in $C(B_2(0)),$ as well.  

We will show that for $\epsilon << 1$ the function $v_{\infty, \epsilon}$ is harmonic.  First, for any test function $\xi \in C^{\infty}_c(B_2(0))$, we have that

\begin{eqnarray*}
|\int_{B_2(0)}\xi (\Delta T_{Q_j, r_j}v_{j, \epsilon} - \Delta v_{\infty, \epsilon})dV| & = & | \int_{B_2(0)} \Delta \xi ( T_{Q_j, r_j}v_{j, \epsilon} - v_{\infty, \epsilon}) dV| \\
& \le & ||\Delta \xi ||_{L^2(B_2(0))} || T_{Q_j, r_j}v_{j, \epsilon} - v_{\infty, \epsilon} ||_{L^2(B_2(0))}.
\end{eqnarray*}

Since, $T_{Q_j, r_j}v_{j, \epsilon} \rightarrow v_{\infty, \epsilon}$ strongly in $L^2(B_2(0))$, $\Delta T_{Q_j, r_j}v_{j, \epsilon} \rightharpoonup \Delta v_{\infty, \epsilon}$ in $L^2(B_2(0)).$  

However, by assumption, we also have that 
\begin{eqnarray*}
|\int_{B_2(0)}\xi \Delta T_{Q_j, r_j}v_{j, \epsilon}dV| & \le & \int_{B_2(0)} |\xi_{\epsilon}|  |\frac{h_j(0)}{h_j(x)} - 1| dT_{Q_j, r_j}\omega^- \\
& \le & C \max_{B_2(0)} |\xi| \cdot ||\ln(h_j)||_{\alpha}  T_{Q_j, r_j}\omega^-(B_{3}(0)),
\end{eqnarray*}
where $T_{Q_j, r_j}\omega^{\pm}$ are the interior and exterior harmonic measures associated to $T_{Q_j, r_j}v_{j}$.  Note that $T_{Q_j, r_j}\omega^- \not = \omega_{Q_j, r_j}^-,$ but, by Remark \ref{align rmk}, Definition \ref{L2 rescaling def}, and Lemma \ref{Lipschitz} there is a constant, $c' = c'(M_0)$, such that $T_{Q_j, r_j}\omega^{-} = c \omega_{Q_j, r_j}^{-}$ and $c \le c'$.  Since $\omega_{r_j, Q_j}^-(B_{3}(0))$ are uniformly bounded by Theorem \ref{TBE combo}, the $T_{Q_j, r_j}\omega^{-}(B_3(0))$ are, too.  Thus, as $j \rightarrow \infty$, we have that $\Delta T_{Q_j, r_j}v_{j, \epsilon} \rightharpoonup 0$ in $L^2(B_2(0))$, as well.  Thus, $\Delta v_{\infty, \epsilon} = 0$ weakly in $L^2(B_2(0))$.  Since $v_{\infty, \epsilon} \in C^{\infty}(B_2(0)),$ by classical results, $v_{\infty, \epsilon}$ is harmonic.

Since $v_{\infty}$ is Lipschitz continuous, $v_{\infty, \epsilon} \rightarrow v_{\infty}$ in $C(B_R(0))$ as $\epsilon \rightarrow 0$.  Thus, for all $x \in B_R(0)$ we have both that $v_{\infty, \epsilon}(x) \rightarrow v_{\infty}(x)$ as $\epsilon \rightarrow 0$ and that 
$$
\fint_{B_r(x)}v_{\infty, \epsilon}(y)dV(y) \rightarrow \fint_{B_r(x)}v_{\infty}(y)dV(y)
$$
 as $\epsilon \rightarrow 0$.  Thus, $v_{\infty}$ must satisfy the Mean Value Property and is therefore harmonic.  
\end{proof}

Now that we have Lemma \ref{better compactness lem}, we can prove a quantitative rigidity result.  Loosely speaking, it says that if a function $v \in \mathcal{A}(n, \Lambda, \alpha, M_0)$ behaves like a homogeneous harmonic polynomial with respect to the Almgren frequency (in the sense that if has small drop across scales), then it must be close to being a homogeneous harmonic polynomial.  This will connect the behavior of the Almgren frequency to our quantitative stratification.

 \begin{lem} (Quantitative rigidity)\label{quant rigidity}
Let $v \in \mathcal{A}(\Lambda, \alpha, M_0)$, as above.  Let $Q_i \in B_\frac{1}{4}(0) \cap \partial \Omega^{\pm}$.  For every $\delta > 0$, there is an $\gamma= \gamma(n, \Lambda, \alpha, M_0, \delta)> 0$ such that if $||\ln(h)||_{\alpha} \le \gamma$ and 
$$
N(Q, 1, v) - N(Q, \gamma, v) \le \gamma
$$
then $v$ is $(0, \delta, 1, p)$-symmetric.
\end{lem}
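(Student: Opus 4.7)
The plan is to argue by contradiction, using the compactness and harmonic-limit theory of Sections \ref{S:compactness}--\ref{S:monotonicity} to reduce the statement to the classical rigidity of the Almgren frequency for harmonic functions.  Suppose the lemma fails: then there exist $\delta>0$, a sequence $\gamma_i \downarrow 0$, functions $v_i \in \mathcal{A}(\Lambda,\alpha,M_0)$ with $\|\ln(h_i)\|_\alpha \le \gamma_i$, and points $p_i \in B_{1/4}(0)\cap \overline{\Omega_i^-}$ satisfying $N(1,p_i,v_i)-N(\gamma_i,p_i,v_i)\le \gamma_i$, yet $v_i$ is not $(0,\delta,1,p_i)$-symmetric.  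Consider the $L^2$-rescalings $T_{p_i,1}v_i$.  By Lemma \ref{compactness-ish II} (after extracting a subsequence) these converge in $W^{1,2}_{loc}(\RR^n)\cap C_{loc}(\RR^n)$ to a Lipschitz function $v_\infty$; and repeating the weak-$L^2$ argument in the proof of Lemma \ref{better compactness lem} — which uses only $\|\ln(h_i)\|_\alpha\to 0$ together with the local uniform bounds on $\omega^-_{p_i,1}$ from Corollary \ref{TBE combo cor} — shows $v_\infty$ is harmonic on $B_1(0)$.  From the normalization built into $T_{p_i,1}$ and uniform convergence, $\fint_{\partial B_1(0)} v_\infty^2 = 1$ (so $v_\infty\not\equiv 0$) and $v_\infty(0)=0$.

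Next, I would show $N(\cdot,0,v_\infty)$ is constant on $(0,1]$.  By Lemma \ref{N rescaling invariance} and Corollary \ref{N W convergence cor}, $N(r,p_i,v_i)\to N(r,0,v_\infty)$ for every $r\in(0,1]$.  For the sequence itself, the hypothesis $N(1,p_i,v_i)-N(\gamma_i,p_i,v_i)\le \gamma_i$ combined with Lemma \ref{global to local} (applied at $R=1$, $r=\gamma_i$) gives
\begin{equation*}
N(S,p_i,v_i)-N(s,p_i,v_i) \le (2C_1+1)\gamma_i \qquad \text{for all } \gamma_i \le s < S \le 1,
\end{equation*}
while Lemma \ref{N non-degenerate almost monotonicity} supplies the matching lower bound $N(S,p_i,v_i)-N(s,p_i,v_i)\ge -C_1\gamma_i$.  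Passing to the limit, $N(S,0,v_\infty)=N(s,0,v_\infty)$ for every $0<s<S\le 1$.  It is a classical fact (via the homogeneous-polynomial expansion of harmonic functions) that a harmonic function with constant positive Almgren frequency on an interval and vanishing at the origin is a homogeneous harmonic polynomial.  Hence $v_\infty$ is a nontrivial homogeneous harmonic polynomial $P$ of degree $d=N(1,0,v_\infty)$, which is $0$-symmetric in the sense of Definition \ref{symmetric def} (with $c=1$) and satisfies $\fint_{\partial B_1(0)} P^2=1$.  Finally, $L^2$-convergence on $B_1(0)$ gives $\fint_{B_1(0)}|T_{p_i,1}v_i-P|^2<\delta$ for all sufficiently large $i$, contradicting the failure of $(0,\delta,1,p_i)$-symmetry.

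The main obstacle is the second step: propagating the hypothesized total drop $\gamma_i$ into a uniform drop bound across \emph{all} intermediate scales $\gamma_i\le s<S\le 1$, with an error that also vanishes as $\gamma_i\to 0$.  This is where the symmetric upper and lower almost-monotonicity (Lemmata \ref{global to local} and \ref{N non-degenerate almost monotonicity}) do the heavy lifting, since both of their error terms scale like $\|\ln(h_i)\|_\alpha\le\gamma_i$.  The remaining subtlety is to check that the convergence/compactness results in Section \ref{S:compactness} really apply at the top scale $r=1$ (so that $\fint_{\partial B_1} v_\infty^2=1$ is attained in the limit); this follows because $p_i\in B_{1/4}(0)$ keeps $B_{1+\epsilon}(p_i)$ comfortably inside the region $B_2(0)$ on which the local $\|\ln(h_i)\|_\alpha$ norms and local doubling of $\omega^-_i$ are controlled.
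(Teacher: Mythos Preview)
Your proposal is correct and follows essentially the same route as the paper: argue by contradiction, pass to a subsequence of $T_{p_i,1}v_i$ converging (via the compactness lemmas) to a harmonic limit $v_\infty$, use the almost-monotonicity estimates to conclude $N(\cdot,0,v_\infty)$ is constant, and invoke the classical rigidity to force $v_\infty$ to be a homogeneous harmonic polynomial, contradicting the assumed failure of $(0,\delta)$-symmetry. Your write-up is in fact slightly more careful than the paper's in explicitly invoking both the upper bound (Lemma~\ref{global to local}) and the lower bound (Lemma~\ref{N non-degenerate almost monotonicity}) on intermediate drops, and in flagging the minor issue that the stated range $r_i\le 1/2$ in Lemma~\ref{better compactness lem} needs to be extended to $r_i=1$.
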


\begin{proof}
We argue by contradiction.  Assume that there exists a $\delta >0$ such that there is a sequence of functions, $v_i \in \mathcal{A}(n, \Lambda, \alpha, M_0)$ with $||\ln(h_i)||_{\alpha} \le 2^{-i}$ for which there exists a point, $Q_i$ with
$$
N(Q_i, 1, v_i) - N(Q_i, 2^{-i}, v_i) \le 2^{-i},
$$
but that no $v_i$ is $(0, \delta, 1, Q_i)$-symmetric.

By Lemma \ref{better compactness lem} there exists a subsequence $T_{Q_j, 1}v_j$ which converges strongly in $W^{1, 2}_{loc}$ to a harmonic function, $v_{\infty}$.  Therefore   $N(Q, r, v_{\infty})$ is monotone increasing.  Further, by Corollary \ref{N W convergence cor} we know that $\lim_{j \rightarrow \infty} N(0, r, T_{Q_j, 1}v_j) = N(0, r, v_{\infty})$.  By Lemma \ref{global to local}, and the nature of convergence, we have that 
$$
N(0, 1, v_{\infty}) - N(0, 0, v_{\infty}) = 0
$$

By classical results, this implies that $v_{\infty}$ is a homogeneous harmonic polynomial.  Thus, we arrive at our contradiction, since $T_{Q_j, 1}v_j$ were assumed to stay away from all such functions in $L^2(B_1(0))$.
\end{proof}

\begin{rmk}
Since $N(Q, r, v)$ is scale-invariant, Lemma \ref{quant rigidity} is also scale-invariant in the sense that if $N(Q, r, v) - N(Q, \gamma r, v) \le \gamma$ and $||\ln(h)||_{\alpha} \le \gamma$, then $v$ is $(0, \delta, r, Q)$-symmetric.
\end{rmk}

\section{A Dichotomy}\label{S:dichotomy}

The proof technique in the rest of the paper is an adaptation of techniques developed by Naber and Valtorta in \cite{NaberValtorta17-1}.

This section is dedicated to proving a lemma that gives us geometric information on the quantitative strata.  Roughly, it says that if we can find $(k+1)$ points that are well-separated and the Almgren frequency has very small drop at these points, then the quantitative strata is contained in a neighborhood of the affine $k$-plane which contains them and we have control on the Almgren frequency for all points in that neighborhood.  This is a quantitative analog of the following classical result.

\begin{prop}\label{cone-splitting harmonic}
Let $P: \RR^n \rightarrow \RR$ be a homogeneous harmonic polynomial.  Let $0 \le k \le n-2$.  If $P$ is translation invariant with respect to some $k$-dimensional subspace $V$ and $P$ is homogeneous with respect to some point $x \not\in V$, then $P$ is $k + 1$-symmetric with respect to $span\{x, V\}$.  
\end{prop}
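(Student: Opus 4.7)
The plan is to reduce $(k{+}1)$-symmetry to showing that $P$ is translation invariant in the direction of $x$, then combine with the given invariance along $V$. Since $x \notin V$, the subspace $\mathrm{span}\{V,x\}$ has dimension $k+1$; and because $P$ is already a homogeneous harmonic polynomial, it is automatically $0$-symmetric in the sense of Definition \ref{symmetric def}, so translation invariance along a $(k{+}1)$-dimensional subspace immediately upgrades this to $(k{+}1)$-symmetry. The whole argument is thus a polynomial-dilation identity.

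First, I would pin down the two homogeneity statements. Homogeneity about $0$ gives $P(\lambda y) = \lambda^d P(y)$, where $d$ is the total degree of $P$ (note the two degrees must agree since homogeneity about $x$ forces $P(y) = R(y-x)$ for some homogeneous $R$ of the same total degree $d$). Homogeneity about $x$ of this same degree means, using the centered dilation $\sigma^x_\lambda(y) = \lambda(y-x) + x = \lambda y + (1-\lambda)x$, that
\begin{equation*}
P\bigl(\lambda y + (1-\lambda)x\bigr) \;=\; \lambda^d P(y),
\end{equation*}
where I have also used $P(x) = 0$ (which holds because $P$ vanishes at its homogeneity center when $d \geq 1$; the case $d=0$ is trivial).

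Second, I would equate the two expressions $\lambda^d P(y) = P(\lambda y) = P(\lambda y + (1-\lambda)x)$. Substituting $z = \lambda y$ for $\lambda \neq 0$ yields $P(z) = P(z + (1-\lambda)x)$ for all $z\in\RR^n$ and all $\lambda \in \RR\setminus\{0\}$. For fixed $z$, the map $t \mapsto P(z + tx) - P(z)$ is a polynomial in $t$ with infinitely many roots (namely $t = 1-\lambda$, $\lambda\neq 0$), so it vanishes identically. Hence $P(z + tx) = P(z)$ for all $t\in\RR$ and all $z\in\RR^n$, which is exactly translation invariance along the line $\RR x$.

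Finally, combining with the given invariance along $V$, for any $v \in V$, $t\in\RR$, and $z\in\RR^n$ we get $P(z + v + tx) = P(z + tx) = P(z)$, so $P$ is translation invariant along $\mathrm{span}\{V,x\}$, a $(k{+}1)$-plane. Invoking Definition \ref{symmetric def} completes the proof. There is no real obstacle here; this is a classical cone-splitting fact recorded for direct use when we pass to the quantitative analog on $v \in \mathcal{A}(\Lambda,\alpha,M_0)$, where the corresponding claim will have to be coaxed out via the compactness machinery of Section \ref{S:compactness} and the quantitative rigidity Lemma \ref{quant rigidity}.
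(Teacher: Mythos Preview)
Your proof is correct. The paper does not supply its own argument for this proposition; it simply cites \cite{CheegerNaberValtorta15} (Proposition 2.11) and \cite{HanLin_nodalsets} (proof of Theorem 4.1.3), where the same dilation-identity/Euler-relation computation appears. Your direct comparison of the two homogeneity relations $P(\lambda y)=\lambda^d P(y)$ and $P(\lambda y+(1-\lambda)x)=\lambda^d P(y)$, followed by the polynomial-in-$t$ argument, is exactly the standard cone-splitting proof recorded in those references, so there is nothing to add.
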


See \cite{CheegerNaberValtorta15}, (Proposition 2.11), or \cite{HanLin_nodalsets} in the proof of Theorem 4.1.3.

We shall use the notation $\langle y_0, ... , y_k \rangle$ to denote the $k$-dimensional affine linear subspace which passes through $y_0, ... , y_k$.

\begin{lem}\label{geometric control lem}
Let $v \in \mathcal{A}(\Lambda, \alpha, M_0)$ and $0<\epsilon$ be fixed.  Let $\gamma, \eta', \rho>0$ be fixed, then there exist constants, $0<\eta_0(n, \Lambda, \alpha, E_0, \epsilon, \eta', \gamma, \rho) << \rho$ and a $\beta(n, \Lambda, \alpha, E_0, \epsilon, \eta', \rho) <1$ such that the following holds.  If $\eta \le \eta_0$ and 

\begin{enumerate}
\item $E= \sup_{Q \in B_1(0) \cap \partial \Omega^{\pm}}N( Q, 2, v) \in [0, E_0]$
\item There exist points $\{y_0, y_1, ... , y_k \}\subset B_1(0)\cap \partial \Omega^{\pm}$ satisfying $y_i \not \in B_{\rho}(\langle y_0, ..., y_{i-1}, y_{i+1}, ... , y_k \rangle)$ and 
$$
N(y_i, \gamma \rho, v) \ge E-\eta_0
$$
for all $i = 0, 1, ..., k$.
\item $||\ln(h)||_{\alpha} \le \eta$.
\end{enumerate}

Then, if we denote $\langle y_0, ..., y_{k}\rangle = L$, for all $Q \in B_{\beta}(L) \cap B_1(0)\cap \partial \Omega^{\pm}$
\begin{align*}
N(Q, \gamma \rho, v) & \ge E - \eta',
\end{align*}
and 
\begin{align*}
\mathcal{S}^k_{\epsilon, \eta_0} \cap B_1(0) & \subset B_{\beta}(L).
\end{align*}
\end{lem}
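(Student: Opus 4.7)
The plan is a compactness-contradiction argument. Fixing $\beta = \beta_* < 1$ to be chosen in terms of the parameters, I assume the conclusion fails: then I extract a sequence $\eta_0^i \to 0$ together with counterexamples $v_i \in \mathcal{A}(\Lambda, \alpha, M_0)$ with $\|\ln(h_i)\|_\alpha \leq \eta_0^i$ and well-separated high-frequency points $y_0^i, \dots, y_k^i$ satisfying hypotheses (1)--(3), but such that either (a) some $p_i \in B_{\beta_*}(L_i) \cap B_1(0)$ has $N(\gamma\rho, p_i, v_i) < E_i - \eta'$, or (b) some $q_i \in \mathcal{S}^k_{\epsilon, \eta_0^i}(v_i) \cap B_1(0)$ has $d(q_i, L_i) \geq \beta_*$. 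Passing to a subsequence and invoking Lemma \ref{better compactness lem}, $v_i \to v_\infty$ in $C_{loc} \cap W^{1,2}_{loc}$ with $v_\infty$ harmonic, $y_j^i \to y_j^\infty$, $E_i \to E_\infty \in [0,E_0]$, and $L_\infty := \langle y_0^\infty, \dots, y_k^\infty\rangle$ remains a genuine $k$-plane by the preserved separation.

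The first step will be to identify $v_\infty$ as $k$-symmetric about $L_\infty$. The hypothesis $N(\gamma\rho, y_j^i, v_i) \geq E_i - \eta_0^i$ combined with $N(2, y_j^i, v_i) \leq E_i$ gives a drop across $[\gamma\rho, 2]$ vanishing as $i \to \infty$; plugging into Lemma \ref{N non-degenerate almost monotonicity} and taking the limit via Corollary \ref{N W convergence cor}, the non-degeneracy integrand
\[
\frac{|\nabla v_\infty(y) \cdot (y - y_j^\infty) - \lambda(v_\infty(y) - v_\infty(y_j^\infty))|^2}{|y - y_j^\infty|^{n+2}}
\]
vanishes on $A_{\gamma\rho, 2}(y_j^\infty)$, so $v_\infty - v_\infty(y_j^\infty)$ is a homogeneous harmonic polynomial about $y_j^\infty$ of degree $E_\infty$. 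Iterated cone-splitting---the Euler-identity argument combining two centers of homogeneity to produce a translation direction, then Proposition \ref{cone-splitting harmonic} absorbing the remaining $y_j^\infty$'s---yields translation invariance of $v_\infty$ along the $k$-dimensional subspace $V$ parallel to $L_\infty$, so $v_\infty(x) = v_\infty(y_0^\infty) + P(\pi_{V^\perp}(x - y_0^\infty))$ for a nontrivial homogeneous harmonic polynomial $P$ of degree $E_\infty$ on $V^\perp$.

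For case (a), $p_i \to p_\infty$ with $d(p_\infty, L_\infty) \leq \beta_*$. Translation invariance along $V$ gives $N(\gamma\rho, q, v_\infty) = E_\infty$ for all $q \in L_\infty$. The uniform Lipschitz bounds from Corollary \ref{unif lipschitz} render the continuity of $p \mapsto N(\gamma\rho, p, v_\infty)$ uniform across the compact family of possible limits, so choosing $\beta_*$ sufficiently small (depending only on $n, \Lambda, \alpha, E_0, \epsilon, \eta', \rho$) ensures $N(\gamma\rho, p_\infty, v_\infty) > E_\infty - \eta'/2$. This contradicts $\lim_i N(\gamma\rho, p_i, v_i) = N(\gamma\rho, p_\infty, v_\infty) \leq E_\infty - \eta'$ via Corollary \ref{N W convergence cor} and $E_i \to E_\infty$.

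For case (b), $q_i \to q_\infty$ with $d(q_\infty, L_\infty) \geq \beta_*$, so $(q_\infty - y_0^\infty)_{V^\perp} \neq 0$. I will produce a scale $s_0 > 0$ at which $T_{q_\infty, s_0} v_\infty$ is $(k+1, \epsilon/2)$-symmetric. If $\nabla v_\infty(q_\infty) \neq 0$, the small-$s$ blow-up is linear, hence $(n-1)$-symmetric. If $q_\infty$ is a singular point of $v_\infty$, the Euler identity for the homogeneous function $v_\infty - v_\infty(y_0^\infty)$ about $y_0^\infty$ gives $(\mathrm{Hess}\, v_\infty)(q_\infty) \cdot (q_\infty - y_0^\infty) = (E_\infty - 1) \nabla v_\infty(q_\infty) = 0$, so the leading quadratic Taylor polynomial at $q_\infty$ is translation invariant along $q_\infty - y_0^\infty$; an induction on vanishing order via the same Euler identity applied to $\partial^\alpha v_\infty$ handles higher-order singularities. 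Combined with $V$-translation invariance inherited by the blow-up, the leading Taylor polynomial is translation invariant along the $(k+1)$-dimensional subspace $V \oplus \RR(q_\infty - y_0^\infty)$, hence is $(k+1)$-symmetric. Fixing $s_0$ small enough that $T_{q_\infty, s_0} v_\infty$ is $(k+1, \epsilon/2, s_0, q_\infty)$-symmetric and invoking $C_{loc}$ convergence together with the stability of the rescaling denominator (Lemmata \ref{Lipschitz} and \ref{growth control}), $T_{q_i, s_0} v_i$ is $(k+1, \epsilon, s_0, q_i)$-symmetric for large $i$; since $\eta_0^i < s_0$ eventually, this contradicts $q_i \in \mathcal{S}^k_{\epsilon, \eta_0^i}(v_i)$. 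The main obstacle I anticipate is precisely this singular-point analysis at $q_\infty$, and the Euler-identity induction on the vanishing order is the technical heart of the step.
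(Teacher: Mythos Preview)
Your approach is essentially the same as the paper's: a compactness--contradiction argument via Lemma~\ref{better compactness lem}, identification of the harmonic limit $v_\infty$ as $k$-symmetric about $L_\infty$ by cone-splitting (Proposition~\ref{cone-splitting harmonic}), and then separate contradictions for the two conclusions. Your case (b) analysis via the Euler-identity induction is correct and is exactly the mechanism underlying the paper's one-line assertion that blow-ups of a $k$-symmetric function at points off the spine are $(k+1)$-symmetric; you have simply unpacked that classical fact.

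The one place the paper is cleaner is case (a). Rather than fixing $\beta_*$ in advance and invoking uniform continuity of $p \mapsto N(\gamma\rho, p, \cdot)$ across a compact family of limits, the paper sends $\beta_i \to 0$ along with $\eta_0^i$ in the contradiction sequence. Then $p_\infty \in L_\infty$ exactly, and translation invariance of $v_\infty$ along $L_\infty$ gives $N(r, p_\infty, v_\infty) \equiv E_\infty$ for all $r \in [0,2]$, which directly contradicts $N(\gamma\rho, p_\infty, v_\infty) \le E_\infty - \eta'$ by monotonicity. This sidesteps your uniform-continuity layer entirely---your version is valid, but the circularity you manage (choosing $\beta_*$ before knowing $v_\infty$, then justifying the choice by compactness of the limit family) is unnecessary.
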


\begin{proof}
There are two conclusions. We argue by contradiction for both.  Suppose that the first claim fails.  That is, assume that there exist constants $\gamma, \rho, \eta' >0$ for which there exists a sequence of $v_i \in \mathcal{A}(n, \Lambda, \alpha, M_0)$ with $\sup_{Q \in B_1(0)} N(Q, 2, v_i) = E_i \in [0, E_0]$ and points $\{y_{i, j} \}_{j}$ satisfying (2), above, with $\eta_0 <2^{-i}$, $||\ln(h_i)||_{\alpha} \le 2^{-i}$, and a sequence $\beta_i \le 2^{-i}$ such that for each $i$, there exists a point $x_i \in B_{\beta_i}(L_i) \cap B_1(0)\cap \partial \Omega^{\pm}$ for which $N(x_i, \gamma \rho, v_i) < E - \eta'$.

By Lemma \ref{better compactness lem}, there exists a subsequence $v_j$ such that $T_{0, 1}v_j$ converges to a harmonic function $v_{\infty}$ in the senses outlined in the lemma. Further, by the compactness of $[0, E_0]$, $\overline{B_1(0)}$, and the Grassmannian we may assume that
$$E_j \rightarrow E \quad y_{ij} \rightarrow y_i \quad L_j \rightarrow L \quad x_j \rightarrow x_{\infty} \in \bar B_1(0)\cap \partial \Omega^{\pm}.$$  Note that the convergence given by Lemma \ref{better compactness lem} implies 
$$
\sup_{Q \in B_1(0)}N( Q, 2, v_{\infty}) \le E, \qquad N(x_{\infty}, \gamma \rho, v_{\infty}) < E - \eta',
$$
and
$$
N(y_i, 0, v_{\infty}) \ge E
$$
for all $j= 0, 1, ..., k$.  Because $v_{\infty}$ is harmonic, $N(Q, r, v_{\infty})$ is non-decreasing in $r.$  Therefore, $N(y_i, r, v_{\infty}) = E$ for all $y_i$ and all $r \in [0, 2]$.  Classical results then imply that $v_{\infty}$ is $0$-symmetric in $B_2(y_j)$ for each $y_j$.  Because the $y_j$ are in general position, by Proposition \ref{cone-splitting harmonic}, $v_{\infty}$ is translation invariant along $L$ in $B_{1 +\delta}(0) \subset \cap_j B_2(y_j)$, where $\delta >0$ depends upon the placement of the $y_j \in \overline{B_1(0)}$.  Since $x_{\infty} \in L$, this implies that $N(x_{\infty}, 0, v_{\infty}) = E$.  But this contradicts $N( x_{\infty}, \gamma \rho, v_{\infty}) < E - \eta'$, since $N( x_{\infty}, r, v_{\infty})$ must be non-decreasing in $r$.  This proves the first claim.

Now assume that the second claim fails.  That is, fix $\beta >0$ and assume that there is a sequence of $v_i \in \mathcal{A}(\Lambda, \alpha, M_0)$ with $\sup_{Q \in B_1(0)} N(Q, 2, v_i) = E_i \in [0, E_0]$ and points $\{y_{i, j} \}_{j}$ satisfying (2), above, with $||\ln(h_i)||_{\alpha} \le 2^{-i}$ and a sequence of $\eta_i \rightarrow 0$ such that for each $i$ there exists a point $x_i \in \mathcal{S}^k_{\epsilon, \eta_i}(v_i) \cap B_1(0) \setminus B_{\beta}(L_i).$

Again, we extract a subsequence as above.  The function $v_{\infty}$ will be harmonic and $k$-symmetric in $B_{1+ \delta}(0)$, as above.  And, $x_i \rightarrow x \in \overline{ B_1(0)} \setminus B_{\beta}(L).$  Note that by our definition of $\mathcal{S}^k_{\epsilon, \eta_i}(v_i)$ and Lemma \ref{better compactness lem}, $x \in \mathcal{S}^k_{\epsilon/2}(v_{\infty}).$  

Since $v_{\infty}$ is $k$-symmetric in $B_{1+ \delta}(0)$, every blow-up at a point in $\overline{B_1(0)}\setminus B_{\beta}(L)$ will be $(k+1)$-symmetric.  Thus, there must exist a radius, $r$ for which $v_{\infty}$ is $(k+1, \epsilon/4, r, x)$-symmetric.  This contradicts the conclusion that $x \in \mathcal{S}^k_{\epsilon/2}(v_{\infty}).$ 
\end{proof}

Consider the following dichotomy: either we can find  $(k+1)$ well-separated points, $y_{ij}$, with very small drop in frequency or we cannot.  In the former case, Lemma \ref{geometric control lem} implies that the Almgren frequency has small drop on all of $\mathcal{S}^k_{\epsilon, \eta}(v)$ (and we also get good geometric control).  In the later case, the set on which the Almgren frequency has small drop is close to a $(k-1)$-plane.  In this latter case, even though we have no geometric control on $\mathcal{S}^k_{\epsilon, \eta}(v)$, we have very good packing control on the part with small drop in frequency.  We make this formal in the following corollary.

\begin{cor}(Key Dichotomy)\label{key dichotomy}
Let $\gamma, \rho, \eta'  \in (0, 1)$ and $0<\epsilon$ be fixed. There is an $\eta_0= \eta_0(n, \Lambda, \alpha, E_0, \epsilon, \eta', \gamma, \rho) << \rho$ so that the following holds.  For all $v \in \mathcal{A}(\Lambda, \alpha, M_0)$, with $\sup_{Q \in B_1(0)}N(Q, 2, v) \le E \in [0, E_0]$, if $\eta \le \eta_0$ and $||\ln(h)||_{\alpha} \le \eta$, then one of the following possibilities must occur:
\begin{itemize}
\item[1.] $N(Q, \gamma \rho, v) \ge E - \eta'$ on $S^k_{\epsilon, \eta_0}(v) \cap B_1(0),$ and
$$
\mathcal{S}^k_{\epsilon, \eta_0} \cap B_1(0) \subset B_{\beta}(L).
$$

\item[2.] There exists a $(k-1)$-dimensional affine plane, $L^{k-1}$, such that $$\{Q: N(Q, 2\eta, v) \ge E - \eta_0 \} \cap B_1(0) \subset B_{\rho}(L^{k-1}).$$
\end{itemize}
\end{cor}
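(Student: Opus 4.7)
The plan is to set up a dichotomy based on whether a suitable ``$\rho$-independent'' subset of
\[
F := \{p \in B_1(0) : N(2\eta, p, v) \ge E - \eta_0\}
\]
has cardinality $k+1$ or not. I consider the following inductive selection: start from any $y_0 \in F$, then recursively pick $y_{j+1} \in F$ with $y_{j+1} \notin B_\rho(\langle y_0, \ldots, y_j\rangle)$, so long as such a point exists. Either the procedure produces all $k+1$ points $\{y_0, \ldots, y_k\}$---in which case, up to a dimensional constant that I absorb into $\rho$, the configuration satisfies the ``leave-one-out'' general position hypothesis of Lemma \ref{geometric control lem}---or it terminates at some $j \le k-1$.

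In the first scenario I will apply Lemma \ref{geometric control lem} to obtain conclusion (1), after first upgrading the scale of the frequency lower bound. The almost monotonicity estimate of Lemma \ref{N non-degenerate almost monotonicity} gives, for $2\eta \le \gamma\rho$,
\[
N(\gamma\rho, y_i, v) \ge N(2\eta, y_i, v) - C_1 ||\ln(h)||_{\alpha}(\gamma\rho)^\alpha \ge E - \eta_0 - C_1 \eta (\gamma\rho)^\alpha.
\]
If I shrink $\eta_0$ so that $\eta_0 + C_1 \eta_0 (\gamma\rho)^\alpha$ is at most the threshold required by Lemma \ref{geometric control lem} (call it $\eta_0^{(1)}$), then its hypothesis (2) is satisfied, and its conclusions yield both the frequency bound on $\mathcal{S}^k_{\epsilon,\eta_0}(v) \cap B_1(0)$ and the containment in $B_\beta(L)$ for $L = \langle y_0, \ldots, y_k\rangle$.

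In the second scenario, the inductive procedure halts precisely because no point of $F$ lies outside $B_\rho(\langle y_0, \ldots, y_j\rangle)$, whence $F \subset B_\rho(\langle y_0, \ldots, y_j\rangle)$. Since $\dim\langle y_0, \ldots, y_j\rangle \le j \le k-1$, I extend this affine span to any $(k-1)$-plane $L^{k-1}$ containing it, yielding conclusion (2). The edge case $F = \emptyset$ falls trivially into this alternative.

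The order of constant selection is: given $\eta'$, $\gamma$, $\rho$, first extract $\eta_0^{(1)}$ and the resulting $\beta$ from Lemma \ref{geometric control lem}; then take $\eta_0$ smaller than $\eta_0^{(1)}/2$, smaller than $\gamma\rho/2$, and small enough that $C_1 \eta_0 (\gamma\rho)^\alpha \le \eta_0^{(1)}/2$. The main technical subtlety is verifying that the inductive ``$\rho$-independence'' condition implies the ``leave-one-out'' general position actually required by Lemma \ref{geometric control lem}, but this is a routine linear-algebra argument: once each $y_i$ is $\rho$-far from $\langle y_0, \ldots, y_{i-1}\rangle$ and all points lie in $B_1(0)$, an induction on the codimension shows the points are also $c(n)\rho^k$-far from the affine span of any proper subset, with a dimensional constant that can be pre-absorbed by replacing $\rho$ with a fixed multiple.
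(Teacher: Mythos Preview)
Your argument is correct and matches the paper's intended proof; the paper itself gives no formal argument beyond the remark that case (1) is precisely the conclusion of Lemma~\ref{geometric control lem} and case (2) is the failure of its hypothesis (2). Your inductive selection from $F$ is the standard way to make that dichotomy precise.

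One simplification: the worry about converting the sequential separation condition $y_{j+1}\notin B_\rho(\langle y_0,\dots,y_j\rangle)$ into the leave-one-out condition of Lemma~\ref{geometric control lem} is unnecessary. Inspecting the proof of that lemma, the quantitative $\rho$-separation is used only to guarantee that the limiting points $y_0,\dots,y_k$ still affinely span a $k$-plane, so that Proposition~\ref{cone-splitting harmonic} applies; the sequential condition is already a closed condition that passes to the limit and forces this. So you may invoke Lemma~\ref{geometric control lem} directly with the sequential hypothesis (its proof is unchanged), and your final paragraph can be dropped. Incidentally, your linear-algebra claim as stated is not quite right: sequential $\rho$-separation in $B_1(0)$ only yields leave-one-out separation of order $\rho^k$, not a fixed multiple of $\rho$ (e.g.\ $y_0=0$, $y_1=(\rho,0)$, $y_2=(1,\rho)$ in $\RR^2$), though since $\rho$ is fixed this would still feed into Lemma~\ref{geometric control lem} with parameter $\rho'=c(n)\rho^k$ and the scale mismatch in the conclusion is then repaired by almost monotonicity exactly as you do for the $2\eta\to\gamma\rho$ upgrade.
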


\begin{rmk}
The former case is simply the conclusion of Lemma \ref{geometric control lem}.  In the later case of the dichotomy, we know that all points in $B_1(0) \setminus B_{\rho}(L^{k-1})$ must have $N(2\eta, Q, v) < E-\eta_0.$  Since $N(Q, r, v)$ is almost monotonic and uniformly bounded, this can only happen for each $p$ finitely many times.
\end{rmk}

\section{Spatial Derivatives of the Almgren Frequency}\label{S:spatial derivatives}

The main result of this section is Corollary \ref{frequency pinching triangle inequality error}, in which we estimate the difference between the Almgren frequency at nearby points. 

We need a preliminary estimate.
\begin{lem}\label{l:H comparable}
Let $u \in \mathcal{A}(\Lambda, \alpha, M_0)$ Let $0<r \le \frac{1}{2}$ and $Q \in \partial \Omega^{\pm} \cap B_\frac{1}{4}(0)$ and $p \in B_{\frac{1}{3}r}(Q).$ Then there is a constant $1< c(n, \Lambda, \alpha, M_0, \Gamma) <\infty$ such that
\begin{align*}
c^{-1} < \frac{H(p, r, u)}{H(Q, r, u)}<c.
\end{align*}
\end{lem}

\begin{proof}
Suppose that the lemma is false.  Suppose that there exists a sequence of $ v_i$, radii $0<r_i \le \frac{1}{2}$, with points $Q^i, p^i \in \overline{\Omega} \cap B_{\frac{1}{4}}(0)$ such that  
$$
\frac{H(x^i_2, r_i,  v_i)}{H(Q^i, r_i,  v_i)}<2^{-i}.
$$
We note that $\int_{\partial B_{r_i}(Q^i)} v_i^2d\sigma = r_i^{n-1}\int_{\partial B_1(0)} v_i(r_ix + Q^i)^2d\sigma(x)$ and hence, the assumption is equivalent to
$$
\frac{H(T_{Q^i, r_i}x^i_2, 1, T_{Q^i, r_i} v_i)}{H(0, 1, T_{Q^i, r_i} v_i)} <2^{-i}.
$$
By Lemma \ref{strong compactness I}, we may extract a subsequence $T_{Q^i, r_i} v_i$ which converges in $C^{0}(B_2(0))$ to a function $u_{\infty}$.  Thus, $H(0, 1, u_{\infty}) = H(0, 1, T_{Q^i, r_i} v_i) = 1$.

However, we may find a subsequence such that $T_{Q^i, r_i}x^i_2 \rightarrow x_2 \in \overline{B_{\frac{1}{3}}(0)}$ for which 
$$
H(0, x_2, u_{\infty}) = \lim_{j \rightarrow \infty}H(T_{Q^i, r_i}x^i_2, 1, T_{Q^i, r_i} v_i) = 0.
$$
Thus, by the Maximum Principle, $u_{\infty} \equiv 0$.  This contradicts $H(0, 1, u_{\infty}) = 1$.
\end{proof}

\begin{lem} \label{H comparable rmk}
Let $v \in \mathcal{A}(\Lambda, \alpha, M_0)$, $||\ln(h)||_{\alpha} \le \Gamma$, and let $Q \in B_{\frac{1}{2}}(0) \cap \partial \Omega^{\pm}$ and let $0< s \le \frac{1}{2}$.  For any $p \in B_\frac{s}{3}(Q) \cap \overline{\Omega^{\pm}}$.
$$\int_{\partial B_s(p)}|v|^2 d\sigma \ge C(n, \alpha, M_0, \Gamma) \frac{\omega^-(B_s(Q))^2}{s^{n-3}} $$
 \end{lem}

\begin{proof} 
Let $x_{\text{max}}(p, s)$ to denote the point in $\partial B_s(p) \cap \Omega^-$ which maximizes $|v|$ on $\partial B_s(p) \cap \Omega^-$.  We observe that
\begin{align*}
\int_{\partial B_s(p)}|v|^2 d\sigma& \ge \int_{\partial B_s(p) \cap B_{\delta s}(x_{\text{max}}(p, s))}|v|^2d\sigma\\
& \ge C(n, M_0) |v(x_{\text{max}}(p, s))|^2 s^{n-1}\\
& \ge C(n, M_0) \frac{\omega^-(B_s(Q))^2}{s^{n-3}},
\end{align*}
where $\delta = \delta (M_0, \alpha, \Gamma) >0$ is a proportional constant depending only upon the Lipschitz constant of $v$. The last inequality comes from the NTA estimates $|v(x_{\text{max}}(p, s))| \sim  |v(A_{2s}^-(Q))| \sim \frac{\omega^-(B_s(Q))}{s^{n-2}}$ (see \cite{Engelstein16}Lemma 5.4).

 \end{proof}

\begin{lem}\label{N neg 2}
Let $v \in \mathcal{A}(\Lambda, \alpha, M_0)$ with $||\ln(h) ||_{\alpha} \le \Gamma$, and let $Q \in B_{\frac{1}{4}}(0) \cap \partial \Omega^{\pm}$ and let  $0< s \le \frac{1}{2}$ and $\epsilon << s$.  For all $p \in B_\frac{s}{3}(Q) \cap \overline{\Omega^-}$ and all vectors $|\vec{v}| \le r$. 
\begin{equation}
|\int_{B_s(p)}\langle \nabla v_{\epsilon}, \vec{v}\rangle\Delta v_{\epsilon} dV(x)| \le C||\ln(h) ||_{\alpha}s^{\alpha}\frac{\omega^-(B_s(Q))^2}{s^{n-2}},
 \end{equation}
 where the constant $C= C(M_0)$.
 \end{lem}
\begin{proof}
Let $p, Q,$ and $s$ be as above.  
\begin{align*}
|\int_{B_s(p)}\langle \nabla v_{\epsilon}, \vec{v} \rangle \Delta v_{\epsilon} dV(x)| &= |\int_{B_s(p)}\langle \nabla v_{\epsilon}, \vec{v} \rangle_{\epsilon}\Delta v(x) |\\
& \le  \int_{B_s(p)}|(\langle\nabla v_{\epsilon}, \vec{v} \rangle)_{\epsilon}| |\frac{h(0)}{h(x)} - 1| d\omega^- \\
& \le  \int_{B_{2s}(Q)}|(\langle \nabla v_{\epsilon}, \vec{v} \rangle)_{\epsilon} ||\frac{h(0)}{h(x)} - 1| d\omega^- \\
& \le  ||\ln(h) ||_{\alpha} (2s)^{\alpha}\int_{B_{2s}(Q)}|(\langle \nabla v_{\epsilon}, \vec{v} \rangle)_{\epsilon}| d\omega^- \\
& \le  ||\ln(h) ||_{\alpha} (2s)^{\alpha+1}\int_{B_{2s}(Q)}|\nabla v_{\epsilon}|_{\epsilon} d\omega^-.
\end{align*}

Chasing through the change of variables $x = ry + Q$, we see that $\nabla_x v(x) = \frac{1}{r}\nabla_y v(ry+Q) = \frac{\omega^-(B_r(Q))}{r^{n-1}}\nabla_y v_r(y)$.  Thus, we calculate that for the change of variables $x = 2sy + Q$
\begin{align*}
|\int_{B_s(p)}\langle \nabla v_{\epsilon}, \vec{v} \rangle \Delta v_{\epsilon} dV(x)| & \le ||\ln(h) ||_{\alpha} (2s)^{\alpha+1} \frac{\omega^-(B_{2s}(0))^2}{(2s)^{n-1}}\int_{B_{1}(0)}|\nabla v_{Q, 2s} \star \phi_{\frac{\epsilon}{2s}}| \star \phi_{\frac{\epsilon}{2s}} d\omega_{2s}^- \\
& \le  ||\ln(h) ||_{\alpha} Cs^{\alpha+1} \frac{\omega^-(B_{2s}(0))^2}{(s)^{n-1}}\omega_{Q, 2s}^- (B_{2}(0))\\ 
& \le ||\ln(h) ||_{\alpha} Cs^{\alpha}\frac{\omega^-(B_s(0))^2}{s^{n-2}},
\end{align*}
where the last two inequalities are because $v_{Q,r}$  are uniformly locally Lipschitz, $1 + \epsilon/r < 2$, $\omega_{Q, r}^-(B_2(0))$ are uniformly bounded for $Q \in B_1(0)$ and $r<2$, and the doubling of harmonic measure on NTA domains.
\end{proof}

\begin{lem}
For $v \in \mathcal{A}(\Lambda, \alpha, M_0)$ and $Q \in \partial \Omega^{\pm} \cap B_{\frac{1}{2}}(0)$ and $0< r$.  Then for $p \in \overline{\Omega^{\pm}} \cap B_\frac{r}{3}(Q)$ and $\vec{v} \in \mathbb{R}^n$ such that $|\vec{v}| \le r$, we calculate the spatial directional derivatives as follows. 
\begin{align}
    \frac{\partial}{\partial \vec{v}}H (p, r, v_{\epsilon}) & = 2\int_{\partial B_r(p)}v_\epsilon \nabla v_\epsilon \cdot \vec{v} d\sigma\\
    \frac{\partial}{\partial \vec{v}}D (p, r, v_{\epsilon}) & = 2\int_{\partial B_r(p)}(\nabla v_{\epsilon} \cdot \vec{v})(\nabla v_{\epsilon} \cdot \eta) d\sigma -\int_{B_r(p)}\frac{\partial}{\partial \vec{v}}v_{\epsilon} \Delta v_{\epsilon}dV\\
\label{e:N spatial derivative} \nonumber   \frac{\partial}{\partial \vec{v}}N (p, r, v_{\epsilon}) & = \frac{2}{H (Q, r, v_{\epsilon})}\left(\int_{\partial B_r(p)}\left(r \nabla v_{\epsilon} \cdot \eta - N (Q, r, v_{\epsilon})v_{\epsilon} \right)(\nabla v_{\epsilon} \cdot \vec{v}) d\sigma \right) \\
& - \frac{r\int_{B_r(p)}\frac{\partial}{\partial \vec{v}}v_{\epsilon} \Delta v_{\epsilon}dV}{H (Q, r, v_{\epsilon})}.
\end{align}
\end{lem}

\begin{proof}
Equation (\ref{e:N spatial derivative}) follows immediately from the preceding equations.  The spatial derivative for $H (Q, r u)$ follows from differentiating inside the integral. To obtain the spatial derivative for $D (Q, r, v),$ we recall the Divergence theorem.
\begin{align*}
    \frac{\partial}{\partial \vec{v}}D (Q, r, v) & = \frac{\partial}{\partial \vec{v}}\left( \int_{\partial B_r(p)}v \nabla v \cdot \left(\frac{x-p}{|x-p|}\right)d\sigma(x) - \int_{B_r(p)}v_{\epsilon}\Delta v_{\epsilon}dV\right)\\
    & = \int_{\partial B_r(p)}(\nabla v_{\epsilon} \cdot \vec{v})(\nabla v_{\epsilon} \cdot \left(\frac{x-p}{|x-p|}\right))d\sigma + \int_{\partial B_r(p)} v_{\epsilon}\frac{\partial}{\partial \vec{v}}(\nabla v_{\epsilon} \cdot \left(\frac{x-p}{|x-p|}\right))d\sigma.
\end{align*}
Now, we focus upon the last term. Recalling Green's theorem and the fact that partial derivatives of harmonic functions are themselves harmonic
\begin{align*}
    \int_{\partial B_r(p)} v_{\epsilon}\frac{\partial}{\partial \vec{v}}(\nabla v_{\epsilon} \cdot \left(\frac{x-p}{|x-p|}\right))d\sigma & = \int_{\partial B_r(p)} v_{\epsilon} \nabla \left(\frac{\partial}{\partial \vec{v}}v_{\epsilon}\right) \cdot \left(\frac{x-p}{|x-p|}\right)d\sigma\\
    & = \int_{\partial B_r(p)} \nabla v_{\epsilon} \cdot \left(\frac{x-p}{|x-p|}\right) \frac{\partial}{\partial \vec{v}}v_{\epsilon}d\sigma -\int_{B_r(p)}\frac{\partial}{\partial \vec{v}}v_{\epsilon} \Delta v_{\epsilon}dV\\
    & = \int_{\partial B_r(p)}(\nabla v_{\epsilon} \cdot \vec{v})(\nabla v_{\epsilon} \cdot v) d\sigma -\int_{B_r(p)}\frac{\partial}{\partial \vec{v}}v_{\epsilon} \Delta v_{\epsilon}dV.
\end{align*}
\end{proof}

\begin{definition}
For the sake of concision, we define the following notation for $v \in \mathcal{A}(\Lambda, \alpha, M_0)$, $y \in \overline{\Omega}$, and radii $0< r, R \le 2$.
\begin{align}\label{E def}
E_y(z) & := \nabla v_{\epsilon}(z) \cdot (z-y) - N (y, |z-y|, v_{\epsilon})v_{\epsilon}(z)\\
W_{r, R}(y) & := N (y, R, v_{\epsilon}) - N (y, r, v_{\epsilon}).
\end{align}
\end{definition}

\begin{lem}
Let $v \in \mathcal{A}(\Lambda, \alpha, M_0)$ with $||\ln(h) ||_{\alpha} \le \Gamma$.  Let $Q \in \partial \Omega^{\pm} \cap B_\frac{1}{2}(0)$, and $0<r \le 1$.  Let $Q' \in \partial \Omega^{\pm} \cap B_\frac{r}{3}(Q)$ satisfy and $p \in [Q, Q']$. Then, for $\vec{v} = Q' - Q$ and $0< \epsilon <<r$
\begin{align*}
\label{e:N spatial derivative bound}
\left|\frac{\partial}{\partial \vec{v}}N (p, r, v_{\epsilon}) \right| \lesssim_{n, \Lambda, \alpha, M_0, \Gamma} & 2(W_{\frac{1}{2}r, 2r}(Q)+ W_{\frac{1}{2}r, 2r}(Q') + C \Gamma r^{\alpha})\left(r\left(\frac{\int_{\partial B_r(p)}|\nabla v_{\epsilon}|^2d\sigma}{H (p, r, v_{\epsilon})}\right)^{\frac{1}{2}}+1\right)\\
& + \frac{2}{H (p, r, v_{\epsilon})} \left(\int_{\partial B_r(p)}|E_{Q}(z)|^2 + |E_{Q'}(z)|^2 d\sigma \right)^{\frac{1}{2}} \left(\int_{\partial B_r(p)}(\nabla v_{\epsilon}(z) \cdot (z-p))^2 d\sigma \right)^{\frac{1}{2}}\\
& + C(n, \Lambda)\left(\frac{1}{H (p, r, v_{\epsilon})} \int_{\partial B_r(p)}|E_{Q}(z)|^2 + |E_{Q'}(z)|^2 d\sigma\right)^{\frac{1}{2}}\\
& + 2(N (Q, r, v_{\epsilon}) - N (Q', r, v_{\epsilon})) \left(\frac{\int_{B_r(p)} v_{\epsilon}\Delta v_{\epsilon}dV}{H (p, r, v_{\epsilon})}\right) + r^{\alpha}\Gamma.
\end{align*}
\end{lem}

\begin{proof}
We begin by noting that Lemma \ref{N neg 2} and Lemma \ref{H comparable rmk} give
\begin{align*}
    \left|\frac{r\int_{B_r(p)}\frac{\partial}{\partial \vec{v}}v_{\epsilon} \Delta v_{\epsilon}dV}{H (Q, r, v_{\epsilon})}\right| \le C(n, \alpha, M_0, \Gamma)r^{\alpha}\Gamma.
\end{align*}

Now, we decompose
\begin{align*}
    \nabla v_{\epsilon} \cdot (Q - Q') & = \nabla v_{\epsilon} \cdot (z-Q') - \nabla v_{\epsilon} \cdot (z - Q)\\
    & = (N (Q, |z-Q|, v_{\epsilon}) - N (Q', |z-Q'|, v_{\epsilon}))v_{\epsilon} + (E_{Q}(z) - E_{Q'}(z)).
\end{align*}
Therefore, plugging this into Equation (\ref{e:N spatial derivative}), we obtain for $v = Q' - Q$
\begin{align*}
    &\frac{\partial}{\partial \vec{v}}N (p, r, v_{\epsilon}) \\
    & = \frac{2}{H (p, r, v_{\epsilon})} \left(\int_{\partial B_r(p)}E_{p}(z)((N (Q, |z-Q|, v_{\epsilon}) - N (Q', |z-Q'|, v_{\epsilon}))v_{\epsilon} + (E_{Q}(z) - E_{Q'}(z))) d\sigma \right)\\
    & = A + B - C,
\end{align*}
where we define
\begin{align*}
    & A := \frac{2}{H (p, r, v_{\epsilon})} \int_{\partial B_r(p)}E_{p}(z)(N (Q, |z-Q|, v_{\epsilon}) - N (Q', |z-Q'|, v_{\epsilon}))v_{\epsilon} d\sigma\\
    &B := \frac{2}{H (p, r, v_{\epsilon})} \int_{\partial B_r(p)} \nabla v_{\epsilon}(z) \cdot (z-p) (E_{Q}(z) - E_{Q'}(z)) d\sigma \\
    &C := \frac{2}{H (p, r, v_{\epsilon})} \int_{\partial B_r(p)} N (Q, |z-p|, v_{\epsilon})v_{\epsilon}(z) (E_{Q}(z) - E_{Q'}(z)) d\sigma.
\end{align*}

We begin by estimating $A$.  We rewrite
\begin{align*}
    N (Q, |z-Q|, v_{\epsilon}) - N (Q', |z-Q'|, v_{\epsilon}) & = N (Q, r, v_{\epsilon}) - N (Q', r, v_{\epsilon})\\ 
    & + W_{|z -Q|, r}(Q) + W_{|z -Q'|, r}(Q').
\end{align*}
Note that if $|Q - Q'| \le \frac{r}{3}$ and $p \in [Q, Q']$,  $\frac{r}{2} \le |z-x_i| \le 2r$ for $i = 1, 2$ and all $z \in \partial B_r(p).$ Therefore, by Lemma \ref{global to local}, for all $z \in \partial B_r(p).$
\begin{align*}
    |W_{|z -Q|, r}(Q)| & \le W_{\frac{1}{2}r, 2r}(Q) + 2C_1\Gamma (2r)^{\alpha}\\
    |W_{|z -Q'|, r}(Q')| & \le W_{\frac{1}{2}r, 2r}(Q') + 2C_1\Gamma(2r)^{\alpha}.
\end{align*}
Furthermore, we estimate by the divergence theorem
\begin{align*}
& \int_{\partial B_r(p)}E_{p}(z)(N (Q, r, v_{\epsilon}) - N (Q', r, v_{\epsilon}))v_{\epsilon} d\sigma \\
& = (N (Q, r, v_{\epsilon}) - N (Q', r, v_{\epsilon}))\int_{\partial B_r(p)} v_{\epsilon}\nabla v_{\epsilon}(z) \cdot (z-y) - N (p, |z-p|, v_{\epsilon})v_{\epsilon}^2 d\sigma\\
& = (N (Q, r, v_{\epsilon}) - N (Q', r, v_{\epsilon})) \cdot (0+ \int_{B_r(p)} v_{\epsilon}\Delta v_{\epsilon}dV).
\end{align*}
Thus, we may estimate $A$ using Cauchy-Schwartz
\begin{align*}
    |A| & \le (W_{\frac{1}{2}r, 2r}(Q)+ W_{\frac{1}{2}r, 2r}(Q') + C \Gamma r^{\alpha}) \frac{2}{H (p, r, v_{\epsilon})}\int_{\partial B_r(p)}|E_p(z)||v_{\epsilon}|d\sigma\\
    & + 2(N (Q, r, v_{\epsilon}) - N (Q', r, v_{\epsilon})) \left(\frac{\int_{B_r(p)} v_{\epsilon}\Delta v_{\epsilon}dV}{H (p, r, v_{\epsilon})}\right)\\
    & \le (W_{\frac{1}{2}r, 2r}(Q)+ W_{\frac{1}{2}r, 2r}(Q') + C \Gamma r^{\alpha}) \frac{2}{H (Q, r, v_{\epsilon})}\int_{\partial B_r(p)} r|v_{\epsilon}\nabla v_{\epsilon} \cdot \eta| + |v_{\epsilon}|^2d\sigma\\
    & + 2(N (Q, r, v_{\epsilon}) - N (Q', r, v_{\epsilon})) \left(\frac{\int_{B_r(p)} v_{\epsilon}\Delta v_{\epsilon}dV}{H (p, r, v_{\epsilon})}\right)\\
    & \le (W_{\frac{1}{2}r, 2r}(Q)+ W_{\frac{1}{2}r, 2r}(Q') + C \Gamma r^{\alpha})2\left(r\left(\frac{\int_{\partial B_r(p)}|\nabla v_{\epsilon}|^2d\sigma}{H (Q, r, v_{\epsilon})}\right)^{\frac{1}{2}}+1\right)\\
    & + 2(N (Q, r, v_{\epsilon}) - N (Q', r, v_{\epsilon})) \left(\frac{\int_{B_r(p)} v_{\epsilon}\Delta v_{\epsilon}dV}{H (p, r, v_{\epsilon})}\right).
\end{align*}

Now we estimate $B$ using Cauchy-Schwartz and Lemma \ref{Lipschitz}. 
\begin{align*}
    &\left(\frac{2}{H (p, r, v_{\epsilon})} \int_{\partial B_r(p)} \nabla v_{\epsilon}(z) \cdot (z-p) (E_{Q}(z) - E_{Q'}(z)) d\sigma \right)^2\\
    & \qquad \le \frac{4}{H (p, r, v_{\epsilon})^2}\int_{\partial B_r(p)}(E_{Q}(z) - E_{Q'}(z))^2 d\sigma \int_{\partial B_r(p)}(\nabla v_{\epsilon}(z) \cdot (z-p))^2 d\sigma\\
    & \qquad \le \frac{4}{H (p, r, v_{\epsilon})^2} \int_{\partial B_r(p)}|E_{Q}(z)|^2 + |E_{Q'}(z)|^2 d\sigma \left(\int_{\partial B_r(p)}(\nabla v_{\epsilon}(z) \cdot (z-p))^2 d\sigma \right).
\end{align*}
The same Cauchy-Schwartz argument for $|C|$ shows that
\begin{align*}
    |C| & \lesssim_{n, \Lambda, \alpha, M_0, \Gamma}\left(\frac{1}{H (Q, r, v_{\epsilon})} \int_{\partial B_r(p)}|E_{Q}(z)|^2 + |E_{Q'}(z)|^2 d\sigma\right)^{\frac{1}{2}}.
\end{align*}
This proves the point-wise estimate
\begin{align*}
&\frac{\partial}{\partial v}N (p, r, v_{\epsilon}) \lesssim_{n, \Lambda, \alpha, M_0, \Gamma}
(W_{\frac{1}{2}r, 2r}(Q)+ W_{\frac{1}{2}r, 2r}(Q') + C \Gamma r^{\alpha})2\left(r\left(\frac{\int_{\partial B_r(p)}|\nabla v_{\epsilon}|^2d\sigma}{H (p, r, v_{\epsilon})}\right)^{\frac{1}{2}}+1\right)\\
& \qquad + \frac{2}{H (p, r, v_{\epsilon})} \left(\int_{\partial B_r(p)}|E_{Q}(z)|^2 + |E_{Q'}(z)|^2 d\sigma \right)^{\frac{1}{2}} \left(\int_{\partial B_r(p)}(\nabla v_{\epsilon}(z) \cdot (z-p))^2 d\sigma \right)^{\frac{1}{2}}\\
& \qquad + \left(\frac{1}{H (p, r, v_{\epsilon})} \int_{\partial B_r(p)}|E_{Q}(z)|^2 + |E_{Q'}(z)|^2 d\sigma\right)^{\frac{1}{2}}\\
& \qquad + 2(N (Q, r, v_{\epsilon}) - N (Q', r, v_{\epsilon})) \left(\frac{\int_{B_r(p)} v_{\epsilon}\Delta v_{\epsilon}dV}{H (p, r, v_{\epsilon})}\right) + r^{\alpha}\Gamma.
\end{align*}
We prove the lemma by reversing the roles of $Q, Q'.$
\end{proof}

\begin{cor}\label{frequency pinching triangle inequality error}
Let $v \in \mathcal{A}(\Lambda, \alpha, M_0)$ with $||\ln(h) ||_{\alpha} \le \Gamma$.  Let $Q \in \partial \Omega^{\pm} \cap B_\frac{1}{2}(0)$, and $0<r \le 1$.  Let $Q' \in \partial \Omega^{\pm} \cap B_\frac{r}{3}(Q)$. Then,
\begin{align*}
    |N (Q', r, v) - N (Q, r, v)|& \lesssim_{n, \Lambda, \alpha, M_0, \Gamma} (W_{\frac{1}{2}r, 2r}(Q)+ W_{\frac{1}{2}r, 2r}(Q') + C \Gamma r^{\alpha})(1+ r^{\frac{1}{2}})\\
    & + (W_{\frac{1}{2}r, 2r}(Q)^{\frac{1}{2}} + W_{\frac{1}{2}r, 2r}(Q)^{\frac{1}{2}})(1 + r^{\frac{1}{2}}) + r^{\alpha}\Gamma.
\end{align*}
\end{cor}

\begin{proof}
We shall show that $|N (Q', r, v_{\epsilon}) - N (Q, r, v_{\epsilon})|$ satisfies a corresponding inequality, and let $\epsilon \rightarrow 0$.  Since $N(Q', r, v_{\epsilon}) \rightarrow N(Q', r, v)$ as $\epsilon \rightarrow 0$, this will prove the claim.

Let $v = Q' - Q$ and $p_t := Q + tv$.  Then, we calculate
\begin{align*}
    & |N (Q', r, v_{\epsilon}) - N (Q, r, v_{\epsilon})| \le \int_0^1|\frac{\partial}{\partial t}N (p_t, r, v_{\epsilon})|dt \\
    & \qquad \lesssim_{n, \Lambda, \alpha, M_0, \Gamma} \int_0^1 (W_{\frac{1}{2}r, 2r}(Q)+ W_{\frac{1}{2}r, 2r}(Q') + C \Gamma r^{\alpha})2\left(r\left(\frac{\int_{\partial B_r(p_t)}|\nabla v_{\epsilon}|^2d\sigma}{H (p_t, r, v_{\epsilon})}\right)^{\frac{1}{2}}+1\right)dt \\
    & \qquad \qquad + \int_0^1 \frac{2}{H (p_t, r, v_{\epsilon})} \left(\int_{\partial B_r(p_t)}|E_{Q}(z)|^2 + |E_{Q'}(z)|^2 d\sigma \right)^{\frac{1}{2}} \left(\int_{\partial B_r(p_t)}(\nabla v_{\epsilon}(z) \cdot (z-p_t))^2 d\sigma \right)^{\frac{1}{2}} dt \\
    & \qquad \qquad + \int_0^1\left(\frac{1}{H (p_t, r, v_{\epsilon})} \int_{\partial B_r(p_t)}|E_{Q}(z)|^2 + |E_{Q'}(z)|^2 d\sigma\right)^{\frac{1}{2}}dt\\
    & \qquad \qquad + 2(N (Q, r, v_{\epsilon}) - N (Q', r, v_{\epsilon})) \int_0^1\left(\frac{\int_{B_r(p_t)} v_{\epsilon}\Delta v_{\epsilon}dV}{H (p_t, r, v_{\epsilon})}\right)dt + r^{\alpha}\Gamma.
    \end{align*}
We begin by estimating the first term.    
\begin{align*}
    & \int_0^1 (W_{\frac{1}{2}r, 2r}(Q)+ W_{\frac{1}{2}r, 2r}(Q') + C \Gamma r^{\alpha})2\left(r\left(\frac{\int_{\partial B_r(p_t)}|\nabla v_{\epsilon}|^2d\sigma}{H (p_t, r, v_{\epsilon})}\right)^{\frac{1}{2}}+1\right)dt \\
    & \qquad \le 2\left(W_{\frac{1}{2}r, 2r}(Q) + W_{\frac{1}{2}r, 2r}(Q') + C \Gamma r^{\alpha}\right)\int_0^1r \left(\frac{\int_{\partial B_r(p_t)}|\nabla v_{\epsilon}|^2d\sigma}{H (p_t, r, v_{\epsilon})}\right)^{\frac{1}{2}}d\sigma + 1dt.
\end{align*}
Observe that by Lemma \ref{l:H comparable} and Lemma \ref{H doubling-ish remark}
\begin{align*}
    \int_0^1r \left(\frac{\int_{\partial B_r(p_t)}|\nabla v_{\epsilon}|^2d\sigma}{H (p_t, r, v_{\epsilon})}\right)^{\frac{1}{2}}d\sigma dt & \lesssim_{n, \Lambda, \alpha, M_0, \Gamma}\int_0^1r \left(\frac{\int_{\partial B_r(p_t)}|\nabla v_{\epsilon}|^2d\sigma}{H (Q, r, v_{\epsilon})}\right)^{\frac{1}{2}}d\sigma dt\\
    & \lesssim_{n, \Lambda, \alpha, M_0, \Gamma} r \frac{2}{H (Q, r, v_{\epsilon})^{\frac{1}{2}}}\left(\int_{B_{2r}(Q)}|\nabla v_{\epsilon}|^2 dV\right)^{\frac{1}{2}}\\
    & \lesssim_{n, \Lambda, \alpha, M_0, \Gamma} r^{\frac{1}{2}}.
\end{align*}
By a similar argument, we estimate the terms, $\int_0^1\frac{1}{H (p_t, r, v_{\epsilon})}\int_{\partial B_r(p_t)}|E_{Q}(z)|^2 d\sigma dt$. By Lemma \ref{H comparable rmk}, Lemma \ref{H doubling-ish remark}, and Lemma \ref{N non-degenerate almost monotonicity} 
\begin{align*}
    & \int_0^t\frac{1}{H (p_t, r, v_{\epsilon})}\int_{\partial B_r(p_t)}|E_{Q}(z)|^2 d\sigma dt\\
    & \le \int_{A_{\frac{1}{2}r, 2r}(Q)} \frac{1}{H (p_t, r, v_{\epsilon})} |\nabla v_{\epsilon}(z) \cdot (z-Q) - N (Q, |z-Q|, v_{\epsilon})v_{\epsilon}(z)|^2 dV(z)\\
    & = C(n, \Lambda, \alpha, M_0, \Gamma)W_{\frac{1}{2}r, 2r}(Q)r.
\end{align*}
An identical argument holds for $Q'$ in the place of $Q$.  

We now estimate the second term. Using Cauchy-Schwartz, Lemma \ref{H comparable rmk} and the estimate on $$\int_0^1\frac{1}{H (p_t, r, v_{\epsilon})}\int_{\partial B_r(p_t)}|E_{Q}(z)|^2 d\sigma dt,$$ immediately above, we obtain
    \begin{align*}
    & \int_0^1 \frac{2}{H (p_t, r, v_{\epsilon})} \left(\int_{\partial B_r(p_t)}|E_{Q}(z)|^2 + |E_{Q'}(z)|^2 d\sigma \right)^{\frac{1}{2}} \left(\int_{\partial B_r(p_t)}(\nabla v_{\epsilon}(z) \cdot (z-p_t))^2 d\sigma \right)^{\frac{1}{2}} dt \\
    & \lesssim_{n, \Lambda, \alpha, M_0, \Gamma}  2 \left(W_{\frac{1}{2}r, 2r}(Q) + W_{\frac{1}{2}r, 2r}(Q')\right)^{\frac{1}{2}} \left(\frac{1}{H (Q, r, v_{\epsilon})}\int_{A_{\frac{1}{2}r, 2r}(Q)}(\nabla v_{\epsilon}(z) \cdot (z-p))^2 dV \right)^{\frac{1}{2}}\\
    & \lesssim_{n, \Lambda, \alpha, M_0, \Gamma} 2\left(W_{\frac{1}{2}r, 2r}(Q) + W_{\frac{1}{2}r, 2r}(Q')\right)^{\frac{1}{2}}(N (Q, 2r, v_{\epsilon})r)^{\frac{1}{2}}\\
    & \lesssim_{n, \Lambda, \alpha, M_0, \Gamma} 2\left(W_{\frac{1}{2}r, 2r}(Q) + W_{\frac{1}{2}r, 2r}(Q')\right)^{\frac{1}{2}}r^{\frac{1}{2}},
\end{align*}
where for the second inequality, we have used Lemma \ref{H doubling-ish remark}. 

We now prove bounds on 
$$
2(N (Q, r, v_{\epsilon}) - N (Q', r, v_{\epsilon})) \int_0^1\left(\frac{\int_{B_r(p_t)} v_{\epsilon}\Delta v_{\epsilon}dV}{H (p_t, r, v_{\epsilon})}\right)dt.
$$
Note that
\begin{align*}
|\int_{B_r(p_t)} v_{\epsilon} \Delta v_{\epsilon} dV(x)| &= |\int_{B_r(p)}\langle (v_{\epsilon})_{\epsilon}\Delta v(x) |\\
& \le  \int_{B_r(p)}|(v_{\epsilon})_{\epsilon}| |\frac{h(0)}{h(x)} - 1| d\omega^- \\
& \le  \int_{B_{2r}(Q)}|(v_{\epsilon})_{\epsilon} ||\frac{h(0)}{h(x)} - 1| d\omega^- \\
& \le  ||\ln(h) ||_{\alpha} (2r)^{\alpha}\int_{B_{2r}(Q)}|(v_{\epsilon})_{\epsilon}| d\omega^- \\
& \le  ||\ln(h) ||_{\alpha} (2r)^{\alpha} C\int_{B_{2r}(Q)}|(v_{\epsilon})_{\epsilon}| d\omega^-.
\end{align*}

Chasing through the change of variables $x = ry + Q$, we see that $\nabla_x v(x) = \frac{1}{r}\nabla_y v(ry+Q) = \frac{\omega^-(B_r(Q))}{r^{n-1}}\nabla_y v_r(y)$.  Thus, we calculate that for the change of variables $x = 2ry + Q$
\begin{align*}
|\int_{B_r(p)} v_{\epsilon} \Delta v_{\epsilon} dV(x)| & \le ||\ln(h) ||_{\alpha} (2r)^{\alpha} \frac{\omega^-(B_{2r}(0))^2}{(2r)^{n-1}}\int_{B_{1}(0)}|v_{Q, 2r} \star \phi_{\frac{\epsilon}{2r}}| \star \phi_{\frac{\epsilon}{2r}} d\omega_{2r}^- \\
& \le  ||\ln(h)||_{\alpha} Cr^{\alpha} \frac{\omega^-(B_{2r}(0))^2}{r^{n-1}} C(\frac{\epsilon}{r})\omega_{Q, 2r}^-(B_{2}(0))\\ 
& \le ||\ln(h) ||_{\alpha} Cr^{\alpha}C(\frac{\epsilon}{r})\frac{\omega^-(B_r(0))^2}{r^{n-2}},
\end{align*}
where the last two inequalities are because $v_{Q,r}$  are uniformly locally Lipschitz, $1 + \epsilon/r < 2$, $\omega_{Q, r}^-(B_2(0))$ are uniformly bounded for $Q \in B_1(0)$ and $r<2$, and the doubling of harmonic measure on NTA domains.

Thus, by Lemma \ref{H comparable rmk} we have
\begin{align*}
    2(N (Q, r, v_{\epsilon}) - N (Q', r, v_{\epsilon})) \int_0^1\left(\frac{\int_{B_r(p_t)} v_{\epsilon}\Delta v_{\epsilon}dV}{H (p_t, r, v_{\epsilon})}\right)dt & \le C(n, \Lambda, \alpha, M_0, \Gamma)\Gamma r^{\alpha-1}(\frac{\epsilon}{r}).
\end{align*}
Letting $\epsilon \rightarrow 0$ this term vanishes. Thus, we have
\begin{align*}
    |N (Q', r, v) - N (Q, r, v)|& \le \lim_{\epsilon \rightarrow 0}|N (Q', r, v_{\epsilon}) - N (Q, r, v_{\epsilon})|\\
    & \lesssim_{n, \Lambda, \alpha, M_0, \Gamma}(W_{\frac{1}{2}r, 2r}(Q)+ W_{\frac{1}{2}r, 2r}(Q') + C \Gamma r^{\alpha})(1+ r^{\frac{1}{2}})\\
    & + (W_{\frac{1}{2}r, 2r}(Q)^{\frac{1}{2}} + W_{\frac{1}{2}r, 2r}(Q')^{\frac{1}{2}})(1 + r^{\frac{1}{2}}) + r^{\alpha}\Gamma.
\end{align*}
This proves the lemma.
\end{proof}

\section{Frequency Pinching}\label{S: beta numbers}

In this section, we prove a ``frequency pinching" result (Lemma \ref{beta bound lem}) in the style of \cite{deLellisMarcheseSpadaroValtorta16}.  This kind of result relates the Jones $\beta$-numbers to the drop in Almgren frequency.

\begin{definition}(Jones $\beta$-numbers)\label{beta def}
For $\mu$ a Borel measure, we define $\beta_{\mu}^k (Q,r)^2$ as follows.
\begin{equation*}
\beta_{\mu}^k (Q, r)^2 = \inf_{L^k}\frac{1}{r^k}\int_{B_r(p)} \frac{\emph{dist}(x, L)^2}{r^2}d\mu(x)
\end{equation*}
where the infimum is taken over all affine $k$-planes.
\end{definition}

Taking the \textit{infimum}-- as opposed to the \textit{minimum}-- here is a convention.  The space of admissible planes is compact, so a minimizing plane exists.  Let $V_{\mu}^k(Q, r)$ denote a k-plane which minimizes the \textit{infimum} in the definition of $\beta_{\mu}^k(Q,r)^2$.  Note that this k-plane is not \textit{a priori} unique.  

\begin{lem}(Frequency pinching)\label{beta bound lem}
There exists a constant, $\delta_0 = \delta_0(n, \alpha, M_0, \Gamma)>0$ such that for any $0< \delta \le \delta_0$, if $v \in \mathcal{A}(\Lambda, \alpha, M_0)$ with $||\ln(h) ||_{\alpha} \le \Gamma$ then for any $Q \in \partial \Omega \cap B_{\frac{1}{4}}(0)$ and $0< r \le \frac{1}{4}$ if $v$ is $(0, \delta, 8r, Q)$-symmetric, but not $(k+1, \epsilon, 8r, Q)$-symmetric, then for any finite Borel measure $\mu$ supported in $B_r(Q) \cap \partial \Omega$
\begin{align}\nonumber
\beta_{\mu, 2}^k(Q, r)^2  & \lesssim_{n, \Lambda, \alpha, M_0, \Gamma, \epsilon} \frac{1}{r^{k}}\left( \int_{B_r(Q)} W_{\frac{1}{2}r, 16r}(y)^2 + W_{\frac{1}{2}r, 16r}(y)d\mu(y) \right) \\
& \qquad + (W_{\frac{1}{2}r, 16r}(Q)^2+ W_{\frac{1}{2}r, 16r}(Q) +\Gamma r^{\alpha})\frac{\mu(B_r(Q))}{r^k}.
\end{align}
\end{lem}
Before proving Lemma \ref{beta bound lem}, we prove a few preliminary lemmata. We begin by noting that for any finite Borel measure, $\mu$, and any $B_r(Q)$ we can define the $\mu$ center of mass, $X = \fint_{B_r(Q)}xd\mu(x)$, and define the covariance matrix of the mass distribution in $B_r(Q)$ by 
\begin{align*}
\Sigma = \int_{B_r(Q)}(y - X)(y-X)^{\perp}d\mu(y).
\end{align*}
With this matrix, we may naturally define a symmetric, non-negative bilinear form
$$
Q(v, w) = v^{\perp} \Sigma w = \fint_{B_r(Q)}(v\cdot (y-X))(w \cdot (y - X))d\mu(y).
$$
Let $\vec v_1, ... ,\vec v_n$ be an orthonormal eigenbasis and $\lambda_1 \ge ... \ge \lambda_n \ge 0$ their associated eigenvalues.  These objects enjoy the following relationships
$$
V_{\mu, 2}^k(Q, r)= X + \text{span}\{\vec v_1, ..., \vec v_k \} , \quad \beta_{\mu, 2}^k(x, r)^2 = \frac{\mu(B_r(Q))}{r^k}(\lambda_{k+1} + ... + \lambda_n).
$$
See \cite{Hochman15} Section 4.2.

\begin{lem}\label{beta bound lem, part 1}
Let $v \in \mathcal{A}(\Lambda, \alpha, M_0)$ and let $Q \in \partial \Omega \cap B_{\frac{1}{4}}(0)$ and $0< r \le \frac{1}{4}$.  Let $\mu, Q, \lambda_i, \vec v_i$ defined as above.  For any $i$ and any scalar $c \in \RR$
\begin{align}\nonumber \label{quadratic form bound}
& \lambda_i\frac{1}{r^{n+2}}\int_{A_{3r, 4r}(Q) }(\vec v_i \cdot \nabla v(z))^2dz   \\
 & \qquad \qquad \qquad \le  5^n \fint_{B_r(Q)} \left( \int_{A_{3r, 4r}(y)  } \frac{|cv(z) - \nabla v(z) \cdot (z - y)|^2}{|z-y|^{n+2}}dz \right) d\mu(y).
\end{align}
\end{lem}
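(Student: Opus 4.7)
The strategy is to combine the Rayleigh-quotient representation of $\lambda_i$ with the variational characterization of the center of mass, using the freedom in the scalar $c$ to introduce an optimal shift that creates the integrand on the right-hand side.

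The key point is that $Q(w,w) = \fint_{B_r(p)}(w\cdot(y-X))^2\,d\mu(y) = \min_{\alpha' \in \RR}\fint_{B_r(p)}|w\cdot y - \alpha'|^2\,d\mu(y)$ for every $w \in \RR^n$, so $Q(w,w) \le \fint_{B_r(p)}|w\cdot y - \alpha|^2\,d\mu(y)$ for any choice of $\alpha$. Fix $z$ at which $\nabla T_{0,1}v(z)$ is defined, apply this with $w = \nabla T_{0,1}v(z)$, and pick $\alpha = \nabla T_{0,1}v(z) \cdot z - c(T_{0,1}v(z) - T_{0,1}v(p))$. This choice is engineered so that $w \cdot y - \alpha = c(T_{0,1}v(z) - T_{0,1}v(p)) - \nabla T_{0,1}v(z) \cdot (z-y)$, which is precisely the quantity whose square appears in the target integrand. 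Since every eigenvalue of $Q$ is non-negative and the $\vec v_j$ diagonalize it, $\lambda_i(\vec v_i \cdot \nabla T_{0,1}v(z))^2 \le \sum_j \lambda_j(\vec v_j \cdot \nabla T_{0,1}v(z))^2 = Q(\nabla T_{0,1}v(z), \nabla T_{0,1}v(z))$, and chaining these two inequalities gives the desired pointwise (in $z$) bound.

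The remainder is bookkeeping. I would integrate the pointwise bound over $z \in A_{3r,4r}(p)$, apply Fubini to move the $y$-average outside, and use that for $z \in A_{3r,4r}(p)$ and $y \in B_r(p)$ the triangle inequality gives $2r \le |z-y| \le 5r$, so in particular $A_{3r,4r}(p) \subset A_{2r,7r}(y)$. Multiplying and dividing the integrand by $|z-y|^{n+2}$ lets the weight absorb the normalization $r^{-(n+2)}$ on the left, and the enlargement of the $z$-region to $A_{2r,7r}(y)$ then produces exactly the right-hand side of the lemma, with the constant coming directly from the triangle-inequality bounds. I do not foresee any substantive obstacle; the only non-routine element is the engineered choice of $\alpha$, after which the argument reduces to standard Cauchy--Schwarz-style manipulations.
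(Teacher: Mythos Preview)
Your proposal is correct and follows essentially the same route as the paper: both exploit the center-of-mass property to insert the free scalar shift (you phrase it as the variational characterization $Q(w,w)=\min_{\alpha'}\fint|w\cdot y-\alpha'|^2\,d\mu$, the paper phrases it as ``adding zero''), obtain the identical pointwise bound $\lambda_i(\vec v_i\cdot\nabla T_{0,1}v(z))^2\le \fint_{B_r(p)}|c(T_{0,1}v(z)-T_{0,1}v(p))-\nabla T_{0,1}v(z)\cdot(z-y)|^2\,d\mu(y)$, and then finish with the same Fubini/containment argument using $2r\le|z-y|\le 5r$ and $A_{3r,4r}(p)\subset A_{2r,7r}(y)$. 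The only cosmetic difference is that the paper reaches the pointwise bound via Cauchy--Schwarz on $Q(\vec v_i,\nabla T_{0,1}v(z))$, whereas you go through $\lambda_i(\vec v_i\cdot w)^2\le Q(w,w)$; these are equivalent.
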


\begin{proof} 
Observe that by the definition of center of mass $$\fint_{B_{r}(Q)}\vec w \cdot (y-X)d\mu(y) = 0$$ for any $\vec w \in \RR^n.$ Therefore, for any $z$ for which $\nabla v(z)$ is defined
\begin{align*}
\lambda_i(\vec v_i \cdot \nabla v(z)) = &~ Q(\vec v_i, \nabla v(z))\\
=& \fint_{B_r(Q)}(\vec v_i \cdot (y-X))(\nabla v(z) \cdot (y - X))d\mu(y) \\
=& \fint_{B_r(Q)}(\vec v_i \cdot (y-X))(\nabla v(z) \cdot (y - X))d\mu(y)\\
 &+ \fint_{B_{r}(Q)} c v(z)(\vec v_i \cdot (y-X))d\mu(y)\\
= &  \fint_{B_r(Q)}(\vec v_i \cdot (y-X))(c v(z) - \nabla  v(z) \cdot (X - z + z -y))d\mu(y) \\
= &  \fint_{B_r(Q)}(\vec v_i \cdot (y-X))(c v(z) - \nabla  v(z) \cdot (z - y))d\mu(y) \\
 \le & \lambda_i^{\frac{1}{2}} \left(\fint_{B_r(Q)} |c v(z) - \nabla  v(z) \cdot (z - y)|^2d\mu(y)\right)^{\frac{1}{2}}.
\end{align*}
Recalling that $A_{r, R}(Q) = B_{R}(Q) \setminus B_{r}(Q),$ we calculate
\begin{align*}
&\lambda_i\frac{1}{r^{n+2}}\int_{A_{3r, 4r}(Q)}(\vec v_i \cdot \nabla  v(z))^2dz \\
& \qquad \le \frac{1}{r^{n+2}} \int_{A_{3r, 4r}(Q)} \left( \fint_{B_r(Q)} |c v(z) - \nabla  v(z) \cdot (z - y)|^2d\mu(y)\right)dz\\
& \qquad \le 5^n \fint_{B_r(Q)} \left( \int_{A_{3r, 4r}(Q)} \frac{|c v(z) - \nabla  v(z) \cdot (z - y)|^2}{|z-y|^{n+2}}dz \right) d\mu(y)\\
& \qquad \le 5^n \fint_{B_r(Q)} \left(\int_{A_{3r, 4r}(y)} \frac{|c v(z) - \nabla  v(z) \cdot (z - y)|^2}{|z-y|^{n+2}}dz \right)d\mu(y).
\end{align*}
\end{proof}

\begin{lem}\label{fudge III}
Let $v \in \mathcal{A}(\Lambda, \alpha, M_0)$ with $||\ln(h) ||_{\alpha} \le \Gamma$ and $0 \le k \le n-2$.  Let $Q \in \partial \Omega \cap B_{\frac{1}{4}}(0)$ and $0< r \le \frac{1}{32}$.  Let $0< \epsilon$ be fixed.  There is a $0< \delta_0(n, \Lambda, \alpha, M_0, \Gamma, \epsilon)$ such that if $v$ is $(0, \delta)$-symmetric in $B_{8r}(Q)$ for any  $0<\delta \le \delta_0$ but not $(k+1, \epsilon, 8r, Q)$-symmetric, then for any $y \in B_r(Q)\cap \partial \Omega$
\begin{align*}
& \int_{A_{3r, 4r}(y) } \frac{|N(Q, 7r, v) v(z) - \nabla  v(z) \cdot (z - y)|^2}{|z-y|^{n+2}}dz\\
& \le 4 \int_{A_{2r, 7r}(y) } \frac{|N (y, |z-y|,  v) v(z) - \nabla  v(z) \cdot (z - y)|^2}{|z-y|^{n+2}}dz\\
& \qquad + C(n, \Lambda, \alpha, M_0, \Gamma)\left(W_{r, 16r}(Q)^2 + W_{r, 16r}(y)^2 + W_{\frac{1}{2}r, 16r}(y) + W_{\frac{1}{2}r, 16r}(Q) + \Gamma r^{2\alpha}\right)(\int_{B_1(0)} v(rx+p)^2d\sigma).
\end{align*}
\end{lem}

\begin{proof}
First, we observe that
\begin{align*}
N (Q, 7r,  v) = &  N (y, |z-y|,  v) + W_{|z-y|, 7r}(Q) \\
& + [N (Q, |z-y|,  v) - N (y, |z-y|,  v)].
\end{align*}
Therefore, by the triangle inequality
\begin{align*}
&|N (Q, 7r,  v) v(z) - \nabla  v(z) \cdot (z - y)|^2\\
&\quad \le (|(W_{|z-y|, 7r}(Q) + N (Q, |z-y|,  v) - N (y, |z-y|,  v)) v(z)| \\
&\qquad +|(N (y, |z-y|,  v)) v(z) - \nabla  v(z) \cdot (z - y)|)^2 \\
&\quad \le 2|(W_{|z-y|, 7r}(Q) + N (Q, |z-y|,  v) - N (y, |z-y|,  v)) v(z)|^2 \\
&\qquad +2|(N (y, |z-y|,  v)) v(z) - \nabla  v(z) \cdot (z - y)|^2. 
\end{align*}

Now, using Lemma \ref{frequency pinching triangle inequality error} at scale $|z-y|$, the monotonicity of the Almgren frequency, and the fact that $0< r \le \frac{1}{32}$ to estimate
\begin{align*}
    |N (Q, |z-y|,  v) - N (y, |z-y|,  v)| & \lesssim_{n, \Lambda, \alpha, M_0, \Gamma} (W_{\frac{1}{2}r, 2r}(Q)+ W_{\frac{1}{2}r, 2r}(y) + C \Gamma r^{\alpha})(1+ r^{\frac{1}{2}})\\
    & \qquad + (W_{\frac{1}{2}r, 2r}(Q)^{\frac{1}{2}} + W_{\frac{1}{2}r, 2r}(y)^{\frac{1}{2}})(1 + r^{\frac{1}{2}}) + r^{\alpha}\Gamma \\
    & \lesssim_{n, \Lambda, \alpha, M_0, \Gamma} (W_{\frac{1}{2}r, 2r}(Q)+ W_{\frac{1}{2}r, 2r}(y) + \Gamma r^{\alpha} \\
    &\qquad + W_{\frac{1}{2}r, 2r}(Q)^{\frac{1}{2}} + W_{\frac{1}{2}r, 2r}(y)^{\frac{1}{2}}.
\end{align*}

\begin{align*}
    & \int_{A_{2r, 7r}(y)} \frac{|(W_{|z-y|, 7r}(Q) + N (Q, |z-y|,  v) - N (y, |z-y|,  v)) v(z)|^2}{|z-y|^{n+2}}dz \\
    & \lesssim_{n, \Lambda, \alpha, M_0, \Gamma}  \left(W_{r, 16r}(Q)^2 + W_{r, 16r}(y)^2 + W_{\frac{1}{2}r, 16r}(y) + W_{\frac{1}{2}r, 16r}(Q) + \Gamma r^{2\alpha}\right) \int_{A_{2r, 7r}(y) } \frac{| v(z)|^2}{|z-y|^{n+2}}dz\\
    & \lesssim_{n, \Lambda, \alpha, M_0, \Gamma}\left(W_{r, 16r}(Q)^2 + W_{r, 16r}(y)^2 + W_{\frac{1}{2}r, 16r}(y) + W_{\frac{1}{2}r, 16r}(Q) + \Gamma r^{2\alpha}\right)\left(\int_{B_1(0)} v(rx+Q)^2d\sigma \right).
\end{align*}
In the last inequality, we have use the fact that $T_{Q,r}u$ is uniformly Lipschitz.
\end{proof}


\begin{lem}\label{vector thing}
Let $v \in \mathcal{A}(\Lambda, \alpha, M_0)$ with $||\ln(h) ||_{\alpha} \le \Gamma$ and $0 \le k \le n-2$.  Let $Q \in B_{\frac{1}{16}}(0) \cap \partial \Omega^{\pm},$ $0< r \le \frac{1}{32}$.  Let $0<\epsilon$ be fixed. There exists a constant, $\delta = \delta_0(n, \Lambda, \alpha, M_0, \Gamma \epsilon)>0$ and a constant, $0< C(n, \Lambda, \alpha, M_0, \Gamma, \epsilon)$, such that if $v$ is $(0, \delta, 8r, Q)$-symmetric, but not $(k+1, \epsilon, 8r, Q)$-symmetric, then for any orthonormal vectors, $\vec v_1, ... , \vec v_{k+1}$
\begin{align*}
\frac{1}{C} \le \frac{1}{r^{n+2}} \int_{A_{3r, 4r}(Q)} \sum_{i = 1}^{k+1}(\vec v_i \cdot \nabla v(z))^2dz \left(\int_{\partial B_1(0)}v(rx + Q)^2 d\sigma \right)^{-1}.
\end{align*}
\end{lem} 
\begin{proof}

We argue by contradiction.  Assume that there is a sequence of functions in $ v_i \in \mathcal{A}(\Lambda, \alpha, M_0)$, $Q_i \in B_{\frac{1}{16}}(0) \cap \partial \Omega_i^{\pm},$ and $0< r_i \le \frac{1}{32}$ such that $ v_i$ is $(0, 2^{-j}, 8r_i, Q_i)$-symmetric, but not $(k+1, \epsilon, 8r_i, Q_i)$-symmetric.  And, for each $i$, there exists an orthonormal collection of vectors, $\{\vec v_{ij} \}$, such that, rescaling
$$
\int_{A_{3, 4}(0)} \sum_{j = 1}^{k+1}(\vec v_{ij} \cdot \nabla T_{Q, r}  v_{i}(z))^2dz \le 2^{-i}
$$
By Lemma \ref{compactness-ish I}, we may extract a subsequence $T_{Q_j, r_j}v_j$ for which $ T_{Q_j, r_j} v_j$ converge to a non-degenerate harmonic function, $v_{\infty}$.  Similarly, $\{\vec v_{ij} \}$ converges to an orthonormal collection $\{\vec v_i\}$.  Given the assumptions above, $v_{\infty}$ is also $0$-symmetric in $B_8(0)$ and $\nabla v_{\infty} \cdot \vec v_i =0$ for all $i = 1, ..., k+1.$  Thus, $v_{\infty}$ is $(k+1, 0)$-symmetric in $B_8(0)$.  But, this is a contradiction, since $T_{Q_j, r_j}v_j$ were supposed to stay away from $(k+1)$-symmetric functions in $L^2(B_1(0))$.
\end{proof}

\subsection{The proof of Lemma \ref{beta bound lem}}
\begin{proof}
By Lemma \ref{vector thing} and properties of the $\beta$-numbers, we have for $\{\vec v_i \}$ the orthonormal basis and $\lambda_i$ the associated eigenvalues of the quadratic form in Lemma \ref{beta bound lem, part 1} 
\begin{align*}
\beta_{\mu, 2}^k(Q, r)^2 & \le \frac{\mu(B_r(Q))}{r^{k}}n\lambda_{k+1}\\
& \le \frac{\mu(B_r(Q))}{r^{k}}n C \sum_{i=1}^{k+1} \frac{\lambda_{k+1}}{r^{n+2}} \int_{A_{3r, 4r}(Q) }(\vec v_i \cdot \nabla v(z))^2dz  \left(\int_{\partial B_1(0)}v(rx + Q)^2 d\sigma \right)^{-1}\\
& \le \frac{\mu(B_r(Q))}{r^{k}}n C \sum_{i=1}^{k+1} \frac{\lambda_i}{r^{n+2}} \int_{A_{3r, 4r}(Q) }(\vec v_i \cdot \nabla v(z))^2dz \left(\int_{\partial B_1(0)}v(rx + Q)^2 d\sigma \right)^{-1}.
\end{align*}
Be Lemma \ref{beta bound lem, part 1} and Lemma \ref{fudge III}, we have 
\begin{align*}
    & \frac{\lambda_i}{r^{n+2}} \int_{A_{3r, 4r}(Q) }(\vec v_i \cdot \nabla v(z))^2dz \left(\int_{\partial B_1(0)}v(rx + Q)^2 d\sigma \right)^{-1} \lesssim_{n, \Lambda, \alpha, M_0, \Gamma}\\
    & \fint_{B_r(Q)}\left(\int_{A_{2r, 7r}(y) } \frac{|N (y, |z-y|,  v) v(z) - \nabla  v(z) \cdot (z - y)|^2}{|z-y|^{n+2}}dz \right) d\mu(y)\left(\int_{\partial B_1(0)}v(rx + Q)^2 d\sigma \right)^{-1}\\
    & \qquad +  \fint_{B_r(Q)}\left(W_{r, 16r}(Q)^2 + W_{r, 16r}(y)^2 + W_{\frac{1}{2}r, 16r}(y) + W_{\frac{1}{2}r, 16r}(Q) + \Gamma r^{2\alpha}\right)d\mu(y)\\
    & \lesssim_{n, \Lambda, \alpha, M_0, \Gamma} \fint_{B_r(Q)}\left(\int_{A_{2r, 7r}(y) } \frac{|N (y, |z-y|,  v) v(z) - \nabla  v(z) \cdot (z - y)|^2}{|z-y|^{n+2}}dz \right) d\mu(y)\left(\int_{\partial B_1(0)}v(rx + Q)^2 d\sigma \right)^{-1}\\
    & \qquad + \left(W_{\frac{1}{2}r, 16r}(Q)^2 +W_{\frac{1}{2}r, 16r}(Q) + \Gamma r^{2\alpha} \right)\frac{\mu(B_r(Q))}{r^k}+ \fint_{B_r(Q)}\left(W_{\frac{1}{2}r, 16r}(y)^2 + W_{\frac{1}{2}r, 16r}(y))\right)d\mu(y)\\
\end{align*}
Therefore, collecting constants and using Lemma \ref{N non-degenerate almost monotonicity}, we have that for $c = (\int_{\partial B_1(0)}v(rx+Q)^2d\sigma)^{\frac{-1}{2}}$,
\begin{align*}
\beta_{\mu, 2}^k(Q, r)^2  & \lesssim_{n, \Lambda, \alpha, M_0, \Gamma} \frac{1}{r^{k}}\left( \int_{B_r(Q)} N (y, 8r, cv) - N (y, r, cv)d\mu(y) \right)\\
& \qquad + \left(W_{\frac{1}{2}r, 16r}(Q)^2 +W_{\frac{1}{2}r, 16r}(Q) + \Gamma r^{2\alpha}\right)\frac{\mu(B_r(Q))}{r^k}\\
& \qquad + \fint_{B_r(Q)}\left(W_{\frac{1}{2}r, 16r}(y)^2 + W_{\frac{1}{2}r, 16r}(y))\right)d\mu(y).
\end{align*}
Since the Almgren frequency is invariant under scalar multiplication, the lemma is proved.
\end{proof}


\section{Packing}\label{S:Packing}

The following theorem of Naber and Valtorta \cite{NaberValtorta17-1} is a powerful tool which links the sum of the $\beta_{\mu}^k (Q, r)^2$ over all points and scales to packing estimates.

 \begin{thm}\label{discrete reif}(Discrete Reifenberg, \cite{NaberValtorta17-1})
Let $\{ B_{\tau_i}(x_i)\}_i$ be a collection of disjoint balls such that for all $i=1, 2, ... $  $\tau_i \le 1$. Let $\epsilon_k>0$ be fixed.  Define a measure $$\mu := \sum_i \tau_i^k \delta_{x_i},$$
and suppose that for any $x \in B_2(0)$ and any scale $l \in \{0, 1, 2, ...\}$, if $B_{r_l}(x) \subset B_2(0)$ and $\mu(B_{r_l}(x)) \ge \epsilon_k r_l^k$ then

\begin{equation*}
\sum_{i \ge l} \int_{B_{2r_{l}(x)}} \beta_{\mu}^k (z, 16r_i)^2 d\mu(z) < r_{l}^k \delta^2.
\end{equation*}

Then, there exists a $\delta_0= \delta_0(n, \epsilon_k) >0$ such that if $\delta \le \delta_0$ $$\mu (B_1(0)) = \sum_{\substack{i \emph{ s.t.}\\ x_i \in B_1(0)}} \tau_i^k \le C(n).$$
\end{thm}

Now, we are ready to prove the crucial packing lemma.  

\begin{lem}\label{packing lem}
Fix $0< \epsilon$, and let $v \in \mathcal{A}(\Lambda, \alpha, M_0)$ satisfy $||\ln(h) ||_{\alpha}\le \eta$ and $\sup_{p \in B_1(0)} N(Q, 2, v) = E$. There is an $\eta_1(n, \Lambda, \alpha, M_0, \epsilon)>0$ such that if $\eta \le \eta_1$, then for any $r>0$ if $\{B_{2r_p}(p)\}$ is a collection of disjoint balls satisfying 
\begin{equation}
N(p, \eta r_p, v) \ge E - \eta_1 , \qquad p \in S^k_{\eta_1, r}, \qquad r \le r_p \le 1,
\end{equation} 
we have the following packing condition
\begin{equation}
\sum_{p}r_p^k \le C_2(n, \Lambda, \alpha, M_0, \epsilon).
\end{equation}
\end{lem}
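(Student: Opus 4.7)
The plan is to verify the hypotheses of the Discrete Reifenberg Theorem (Theorem \ref{discrete reif}) applied to the atomic measure $\mu = \sum_p r_p^k \delta_p$ supported on the centers of our disjoint collection. Once the hypothesis is verified with $\delta \le \delta_0(n, \epsilon_k)$, Theorem \ref{discrete reif} gives directly the packing bound $\sum_{p} r_p^k \le C(n)$. The entire work is thus concentrated in proving the Reifenberg summability condition
\[
\sum_{i \ge l} \int_{B_{2 r_l}(x)} \beta_\mu^k(z, 16 r_i)^2 \, d\mu(z) < r_l^k \delta^2
\]
whenever $\mu(B_{r_l}(x)) \ge \epsilon_k r_l^k$, where $r_i = 2^{-i}$. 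Since $\mu$ is atomic at the chosen centers, we only need to estimate $\beta_\mu^k(q, 16 r_i)^2$ for $q$ in our collection and $16 r_i \ge r$.

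First I would verify the two symmetry hypotheses needed to apply Lemma \ref{beta bound lem} at each such $(q, 16 r_i)$. For the $(k+1, \epsilon)$-non-symmetry: since $q \in \mathcal{S}^k_{\eta_1, r}$ and $16 r_i \ge r$, choosing $\eta_1 \le \epsilon$ immediately ensures $v$ is not $(k+1, \epsilon, 128 r_i, q)$-symmetric. For the $(0, \delta_0)$-symmetry: almost-monotonicity (Lemma \ref{N non-degenerate almost monotonicity}, together with Lemma \ref{global to local}) combined with the uniform bound of Lemma \ref{N bound lem} and the hypothesis $N(\eta r_q, q, v) \ge E - \eta_1$ forces $N(s, q, v)$ to lie within $\eta_1 + C\eta^\alpha$ of $E$ for every scale $s \in [\eta r_q, 2]$. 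Hence $N(128 r_i, q, v) - N(\gamma \cdot 128 r_i, q, v) \le \eta_1 + C \eta^\alpha$, which is $\le \gamma$ as soon as $\eta_1$ and $\eta$ are smaller than the threshold of Lemma \ref{quant rigidity}, yielding $(0, \delta_0, 128 r_i, q)$-symmetry. Applying Lemma \ref{beta bound lem} then gives
\[
\beta_\mu^k(q, 16 r_i)^2 \le \frac{C}{r_i^k} \int_{B_{16 r_i}(q)} \bigl[ N(128 r_i, y, v) - N(16 r_i, y, v) + C_1 \eta r_i^\alpha \bigr] d\mu(y) + C \frac{\mu(B_{16 r_i}(q))}{r_i^k} r_i^m.
\]

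The final step is to integrate over $q \in B_{2 r_l}(x)$, swap the order of integration via Fubini (every involved $y$ lies in $B_{C r_l}(x)$ for $i \ge l$), and sum in $i \ge l$. The $N$-drop contribution telescopes: for each fixed $y$ in the support,
\[
\sum_{i \ge l} \bigl[ N(128 r_i, y, v) - N(16 r_i, y, v) \bigr] \;\le\; C \bigl[ N(2, y, v) - N(\eta r_y, y, v) \bigr] + C\eta^\alpha \;\le\; C(\eta_1 + \eta^\alpha)
\]
by almost-monotonicity again and by the hypothesis $N(\eta r_y, y, v) \ge E - \eta_1$. The $C_1 \eta r_i^\alpha$ and $C r_i^m$ error terms form convergent geometric series in $i$, contributing at most $C(\eta + r_l^m) r_l^k$ after the Fubini swap (using $\mu(B_{C r_l}(x)) \lesssim r_l^k$, which itself follows inductively from the packing claim at coarser scales or can be folded into the standard Naber--Valtorta covering/induction on scales argument as in \cite{NaberValtorta17-1}). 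Choosing $\eta_1$ sufficiently small in terms of $\delta_0(n, \epsilon_k)$ and $\epsilon$ makes the total bound smaller than $r_l^k \delta_0^2$, verifying the Discrete Reifenberg hypothesis. The main obstacles are (i) the absolute error term $r^m$ in Lemma \ref{beta bound lem}, which does not decouple into a telescoping $N$-drop and must be controlled by the $m > 0$ gain so that it sums geometrically, and (ii) running a standard scale-induction as in \cite{NaberValtorta17-1} to upgrade the packing bound from $\mu(B_{r_l}(x)) \le C r_l^k$ (needed in the Fubini step) from coarser scales to the scale $r$ of the collection.
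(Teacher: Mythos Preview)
Your proposal is correct and follows essentially the same route as the paper: define the atomic measure, verify the $(0,\delta)$-symmetry and $(k+1,\epsilon)$-non-symmetry hypotheses of Lemma~\ref{beta bound lem} via the pinching hypothesis together with Lemma~\ref{quant rigidity} and Lemma~\ref{global to local}, integrate the resulting $\beta$-estimate, telescope the frequency drops, sum the $\eta r_i^\alpha$ and $r_i^m$ errors geometrically, and run the Naber--Valtorta scale-induction with truncated measures to feed the a~priori mass bound back into the Fubini step.

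One point to sharpen: in your final sentence you write that choosing $\eta_1$ small makes the total bound below $r_l^k\delta_0^2$, but the contribution $C\,r_l^m\,r_l^k$ coming from the $r^m$ error in Lemma~\ref{beta bound lem} is \emph{not} controlled by $\eta_1$. The paper handles this by starting the induction only at scales $r_i\le 2^{-\ell}$ for a fixed $\ell=\ell(n,\Lambda,\alpha,M_0,\epsilon)$ chosen so that $C\sum_{j\ge \ell}2^{(i-j)k-jm}\le \tfrac12\delta_{DR}$, and then absorbing the finitely many coarser scales $r_0,\dots,r_{\ell-1}$ into the constant $C_2$ by a direct disjointness count. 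You allude to this in obstacle~(i), but it is worth making explicit that the $m>0$ gain buys smallness only below a fixed threshold scale, not uniformly.
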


\begin{proof}
Choose $\delta_0(n, \Lambda, \alpha, M_0, \epsilon)$ as in Lemma \ref{beta bound lem}, and $\gamma(n, \Lambda, \alpha, M_0, \delta_0)$ as in Lemma \ref{quant rigidity}.  Note that we may assume without loss of generality that $\eta_1 \le 1$, and so for $C_1(\alpha, M_0, 1)$ the constant in Lemma \ref{N non-degenerate almost monotonicity}, let
$$\eta_1 \le \frac{\min\{\delta_0, \gamma\}}{2C_1+1}.$$
We will employ the convention that $r_i = 2^{-i}$.

For each $i \in \mathbb{N}$, define the truncated measure 
\begin{equation*}
\mu_i = \sum_{r_p \le r_i}r_p^k\delta_p.
\end{equation*}
We will write $\beta_i(x, r)=\beta_{\mu_i, 2}^k(x, r).$  Observe that $\beta_i$ enjoy the following properties.  First, because the balls are disjoint, for all $j \ge i$
\begin{equation*}
\beta_i(x,r_j) = \begin{cases}
\beta_j(x, r_j) & x\in supp(\mu_j)\\
0 & \text{  otherwise.}
\end{cases}
\end{equation*}

Furthermore, for $r_i \le 2^{-4},$ recalling Lemma \ref{global to local} our assumption of the Almgren frequency gives that $N(16r_i, Q, v) - N(r_Q, Q, v) \le (2C_1 ||\ln(h)||_{\alpha} + \eta \le \max\{\delta_0, \gamma\}$.  By Lemma \ref{quant rigidity} and Lemma \ref{beta bound lem} and our choice of $\eta \le \eta_1$
\begin{align*} 
\beta_{\mu_i, 2}^k(Q, r_i)^2 & \lesssim_{n, \Lambda, \alpha, M_0, \Gamma, \epsilon} \fint_{B_{r_i}(Q)}\left(W_{\frac{1}{2}r_i, 16r_i}(y)^2 + W_{\frac{1}{2}r_i, 16r_i}(y))\right)d\mu_i(y)\\
& \qquad + \left(W_{\frac{1}{2}r_i, 16r_i}(Q)^2 +W_{\frac{1}{2}r_i, 16r_i}(Q) + \Gamma r_i^{2\alpha}\right)\frac{\mu_i(B_{r_i}(Q))}{r_i^k}.
\end{align*}

The claim of the lemma is that $\mu_0(B_1(0)) \le C(n, \Lambda, \alpha, M_0, \epsilon)$.  We prove the claim inductively. That is, we shall argue that there is an fixed scale, $0< R= 2^{-\ell}$,  such that for $r_i \le R$ and all $x \in B_1(0)$
\begin{align*}
\mu_i(B_{r_i}(x)) \le C_{DR}(n)r_i^k.
\end{align*}

Observe that since $r_p \ge r> 0$, for $r_i < r$, the claim is trivially satisfied because $\mu_i = 0$. Assume, then, that the inductive hypothesis holds for all $j \ge i+1$.

Let $x \in B_1(0).$ We consider $\mu_i(B_{4r_i}(x)).$  Observe that we can get a course bound
\begin{equation*}
\mu_j(B_{4r_j}(x)) \le \Gamma(n)r_j^k, \quad \forall j \ge i-2, \quad \forall x \in B_1(0),
\end{equation*}
by writing $\mu_j(B_{4r_j}(x)) = \mu_{j+2}(B_{4r_j}(x)) + \sum r^k_p$, where the sum is taken over all $p \in B_{4r_j}(x)$ with $r_{j+2} < r_p \le r_j$.  Since the balls $B_{r_p}(p)$ are disjoint, there is a dimensional constant, $c(n),$ which bounds the number of such points.  Thus, we may take $\Gamma(n) = c(n)C_{DR}$.  

Now, we calculate
\begin{align*}
& \sum_{r_j < 2r_i} \int_{B_{2r_i}(x)} \beta_i(z, r_j )^2d\mu_i(z) =  \sum_{r_j < 2r_i} \int_{B_{2r_i}(x)} \beta_j(z, r_j )^2d\mu_j(z) \\
 & \qquad  \le  C \sum_{r_j < 2r_i}  \frac{1}{r_j^k}  \int_{B_{2r_i}(x)} \left(\int_{B_{r_j}(z)}\left(W_{\frac{1}{2}r_j, 16r_j}(y)^2 + W_{\frac{1}{2}r_j, 16r_j}(y)\right)d\mu_j(y)\right)d\mu_j(z)\\
& \qquad \qquad + C \sum_{r_j < 2r_i}  \int_{B_{2r_i}(x)}\left(\left(W_{\frac{1}{2}r_j, 16r_j}(z)^2 +W_{\frac{1}{2}r_j, 16r_j}(z) + || \ln(h)||_{\alpha} r_j^{2\alpha}\right)\frac{\mu(B_{r_j}(z))}{r_j^k}\right)d\mu_j(z)\\
&\qquad  \le  C \sum_{r_j < 2r_i} \int_{B_{2r_i + r_j}(x)} \frac{\mu_j(B_{r_j}(y))}{r_j^k} (W_{\frac{1}{2}r_j, 16r_j}(y)^2 + W_{\frac{1}{2}r_j, 16r_j}(y)) d\mu_j(y) \\
 & \qquad \qquad +  C \sum_{r_j < 2r_i} \int_{B_{2r_i}(x)} \left(\left(W_{\frac{1}{2}r_j, 16r_j}(z)^2 +W_{\frac{1}{2}r_j, 16r_j}(z) + || \ln(h)||_{\alpha} r_j^{2\alpha}\right)\frac{\mu(B_{r_j}(z))}{r_j^k}\right)d\mu_j(z)\\
& \qquad \le  2C\Gamma(n) \int_{B_{4r_i}(x)}\left( \sum_{r_j < 2r_i} (W_{\frac{1}{2}r_j, 16r_j}(y)^2 + W_{\frac{1}{2}r_j, 16r_j}(y))\right) d\mu_j(y)\\
 & \qquad \qquad +  C \Gamma(n) \sum_{r_j < 2r_i} || \ln(h)||_{\alpha} r_j^{2\alpha} \mu_i(B_{4r_i}(x)).
 \end{align*}
Now, we note that by assumption, $W_{\eta r_p, 1}(p) \le \eta,$ and therefore, recalling the proof of Lemma \ref{global to local} 
\begin{align*}
    |W_{\frac{1}{2}r_j, 16r_j}(p)| \le \eta + C(\alpha, M_0, \Gamma)||\ln(h) ||_{\alpha}(16r_j)^\alpha.  
\end{align*}
Thus, for $0< \eta$ small enough depending only upon $\alpha$ and $M_0$, $|W_{\frac{1}{2}r_j, 16r_j}(p)| \le 1.$  Therefore, $W_{\frac{1}{2}r_j, 16r_j}(p)^2 \le |W_{\frac{1}{2}r_j, 16r_j}(p)|.$ Therefore, recalling $r_i = 2^{-i}$ we see that
\begin{align*}
    \sum_{j = i-1}^{\infty} W_{\frac{1}{2}r_j, 16r_j}(p)^2 + W_{\frac{1}{2}r_j, 16r_j}(p) & \le \sum_{j = i-1}^{N} |W_{\frac{1}{2}r_j, 16r_j}(p)| + W_{\frac{1}{2}r_j, 16r_j}(p)\\
    & \le 6 \text{var}_{r \in [r_p, r_{i-1}]}\left(N(r, p, v)\right)\\
    & \le 12C(\alpha, M_0)||\ln(h)||_{\alpha}(r_{i-1}^{\alpha} - r_p^{\alpha}) + 6\eta.
\end{align*}
Therefore
\begin{align*}
& \sum_{r_j < 2r_i} \int_{B_{2r_i}(x)} \beta_i(z, r_j )^2d\mu_i(z) \\
& \qquad \le C \Gamma(n) \mu_i(B_{4r_i}(x)) \left(6\eta +  12C_1r_{i-1}^{\alpha} \eta\right)r_i^k +  C \Gamma(n)^2 \eta \left(\sum_{r_j < 2r_i} r_j^{2\alpha}\right) r_i^k \\
& \qquad \le C\Gamma(n)^2(1 + C(\alpha)) \eta r_i^k.
\end{align*}
Thus, for $\eta \le \eta_1(n, \Lambda, \alpha, M_0, \epsilon)$ sufficiently small
\begin{align*}
C \Gamma(n)^2 (1 + C(\alpha)) \eta & \le \frac{1}{2} \delta_{DR}.
\end{align*}
Thus, $\mu_i$ satisfies the hypotheses of Theorem \ref{discrete reif}
\begin{equation*}
 \sum_{r_j < 2r_i} \int_{B_{2r_i}(x)} \beta_i(z, r_j )^2d\mu_i(z) \le \delta_{DR}r_i^k.
 \end{equation*}
The Discreet Reifenberg Theorem therefore implies that $\mu_i(B_{r_i}(x)) \le C_{DR}r_i^k$.  

Thus, by induction, the claim holds for $r_i \le 2^{-4}$. We may use a packing argument to obtain estimates at larger scales.  That is, $\mu_0(B_1(0)) \le C_{DR}C(n, \ell)$.
\end{proof}

\section{Tree Construction}\label{S:trees}

In this section, we detail two procedures for inductively-refined covering schemes.  We will use these covering schemes in the next section to generate the actual cover which proves Theorem \ref{main theorem}. First, we fix our constants.

\subsection{Fixing Constants and a Definition.}

In this section, we fix our constants as follows.  Fix $0 < \epsilon$, and let $v \in \mathcal{A}(\Lambda, \alpha, M_0)$. Let $E = \sup_{p \in B_1(0)}N(Q, 2, v)$ and fix the scale of the covering we wish to construct as $R \in (0, 1].$  

We will let $\rho$ denote the inductive scale at which we will refine our cover.  For convenience, we will use the convention $r_i = \rho^{-i}.$  Let $\rho < \frac{1}{10}$ be sufficiently small so that 
$$
2C_2(n, \Lambda, \alpha, M_0, \epsilon)c_2(n)\rho < 1/2.
$$
where $C_2(n, \Lambda, \alpha, M_0, \epsilon)$ is as in Lemma \ref{packing lem} and $c_2(n)$ is a dimensional constant which will be given in the following lemmata.

Let $\delta(n, \Lambda, \alpha, M_0, \epsilon)$ be as in Lemma \ref{beta bound lem} and $\gamma(n, \Lambda, \alpha, M_0, \delta)$ as in Lemma \ref{quant rigidity}.  Now, we also let $\eta_1(n, \Lambda, \alpha, M_0, \epsilon)$ be as in Lemma \ref{packing lem}, and let
$$
\gamma_0 = \eta' = \eta_1/20.
$$

Note that while $\gamma_0 \le \gamma,$ Lemma \ref{quant rigidity} still holds with $\gamma_0$ in place of $\gamma.$  We then let $\eta = \eta_0(n, \Lambda, \alpha, E +1, \epsilon, \eta', \gamma_0, \rho)$ as in Corollary \ref{key dichotomy}.  We shall assume that $v$ satisfies $$||\ln(h) ||_{\alpha} \le \frac{1}{2C_1+1}\eta.$$
 
The sorting principle for our covering comes from Corollary \ref{key dichotomy}.    To formalize this, we make the following definition.

\begin{definition}
For $p \in B_2(0)$ and $0< R< r< 2$, the ball $B_r(p)$ will be called ``good" if 
$$
N(Q, \gamma \rho r, v) \ge E - \eta' \qquad \text{on} \quad S^k_{\epsilon, \eta R}(v) \cap B_r(p).
$$
We will say that $B_r(p)$ is ``bad" if it is not good.
\end{definition}
\vspace{1cm}

\begin{rmk}
By Corollary \ref{key dichotomy}, with $E+ \eta_0/2$ in place of $E$, which is admissible by monotonicity and our choice of $||ln(h) ||_{\alpha} \le \frac{1}{2C_1+1}\eta$, in any bad ball $B_r(p)$ there exists a $(k-1)$-dimensional affine plane $L^{k-1}$ such that
$$
\{N(Q, \gamma \rho r, v) \ge E - \eta_0/2 \} \cap B_r(p) \subset B_{\rho r}(L^{k-1}).
$$ 
\end{rmk}

\subsection{Good trees}
Let $x \in B_1(0)$ and $B_{r_A}(x)$ be a good ball for $A \ge 0$.  We will detail the inductive construction of a good tree based at $B_{r_A}(x)$.  The induction will build a successively refined covering $B_{r_A}(x) \cap S^k_{\epsilon, \eta R}(v)$.  We will terminate the process and have a cover which consists of a collection of bad balls with packing estimates and a collection of stop balls whose radii are comparable to $R$.  We shall use the notation $\mathcal{G}_i$ to denote the collection of centers of good balls of scale $r_i$, $\mathcal{B}_i$ shall denote the collection of centers of bad balls of scale $r_i$.

Because $B_{r_A}(x)$ is a good ball, at scale $i = A$, we set $\mathcal{G}_A = x$.  We let $\mathcal{B}_A = \emptyset$.

Now the inductive step.  Suppose that we have constructed our collections of good and bad balls down to scale $j-1 \ge A$.  Let $\{z \}_{J_i}$ be a maximal $\frac{2}{5}r_j$-net in 
$$
B_{r_A}(x) \cap S^k_{\epsilon, \eta R}(v) \cap B_{r_{j-1}}(\mathcal{G}_{j-1})\setminus \cup_{i=A}^{j-1}B_{r_i}(\mathcal{B}_i).
$$
We then sort these points into $\mathcal{G}_j$ and $\mathcal{B}_j$ depending on whether $B_{r_j}(z)$ is a good ball or a bad ball.  If $r_j > R$, we proceed inductively.  If $r_j \le R$, then we stop the procedure.  In this case, we let $\mathcal{S} = \mathcal{G}_j \cup \mathcal{B}_j$ and we call this the collection of ``stop" balls.  

The covering at which we arrive at the end of this process shall be called the ``good tree at $B_{r_A}(x)$."  We shall follow \cite{EdelenEngelstein17} and denote this $\mathcal{T}_{\mathcal{G}} = \mathcal{T}_{\mathcal{G}}(B_{r_A}(x))$.
We shall call the collection of ``bad" ball centers, $\cup_{i}\mathcal{B}_i$, the ``leaves of the tree" and denote this collection by $\mathcal{F}(\mathcal{T}_{\mathcal{G}}).$  We shall denote the collection of ``stop" ball centers by $\mathcal{S}(\mathcal{T}_{\mathcal{G}}) = \mathcal{S}$.

For $b \in \mathcal{F}(\mathcal{T}_{\mathcal{G}})$ we let $r_b = r_i$ for $i$ such that $b \in \mathcal{B}_i$.  Similarly, is $s \in \mathcal{S}(\mathcal{T}_{\mathcal{G}})$, we let $r_s = r_j$ for the terminal $j$.

\begin{thm}  \label{good trees}
A good tree, $\mathcal{T}_{\mathcal{G}}(B_{r_A}(x))$, enjoys the following properties:
\begin{itemize}
\item[(A)] Tree-leaf packing:
$$
\sum_{b \in \mathcal{F}(\mathcal{T}_{\mathcal{G}})} r_b^k \le C_2(n, \Lambda, \alpha, M_0, \epsilon) r^k_A.
$$ 
\item[(B)] Stop ball packing:
$$
\sum_{s \in \mathcal{S}(\mathcal{T}_{\mathcal{G}})} r_s^k \le C_2(n, \Lambda, \alpha, M_0, \epsilon) r^k_A.
$$ 
\item[(C)] Covering control:
$$
\mathcal{S}^k_{\epsilon, \eta R}(v) \cap B_{r_A}(x) \subset \bigcup_{s \in \mathcal{S}(\mathcal{T}_{\mathcal{G}})} B_{r_s}(s) \cup \bigcup_{b \in \mathcal{F}(\mathcal{T}_{\mathcal{G}})} B_{r_b}(b).
$$
\item[(D)] Size control: for any $s \in \mathcal{S}(\mathcal{T}_{\mathcal{G}})$, $\rho R \le r_s \le R$.
\end{itemize}
\end{thm}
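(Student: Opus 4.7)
The four claims split naturally into geometric consequences of the construction, namely (C) and (D), and packing statements, namely (A) and (B), obtained by invoking Lemma \ref{packing lem}. I will dispatch the geometric statements first, then treat the packing.

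For (D), the stopping criterion halts the process at the first index $j$ for which $r_j \le R$; since the process did not stop at scale $j-1$, we have $r_{j-1} > R$, so $r_s = \rho r_{j-1} > \rho R$ and $r_s = r_j \le R$. For (C), I induct on the scale index. At scale $j$, the portion of $\mathcal{S}^k_{\epsilon, \eta R}(v) \cap B_{r_A}(x)$ not already contained in a previously-chosen bad ball lies in $B_{r_{j-1}}(\mathcal{G}_{j-1})$ by the inductive hypothesis, and maximality of the $\frac{2}{5}r_j$-net ensures every such point is within $\frac{2}{5}r_j < r_j$ of $\mathcal{G}_j \cup \mathcal{B}_j$. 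Iterating until termination places any uncovered stratum point inside a stop ball.

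For (A), the plan is to apply Lemma \ref{packing lem} to the rescaled function $T_{x, r_A}v$ with rescaled centers $\tilde{b} := (b-x)/r_A \in B_1(0)$ and radii $\tilde{r}_b := r_b/r_A \in [R/r_A, 1]$. Three ingredients need to be verified. First, $\tilde{b}$ lies in the appropriate quantitative stratum, inherited from $b \in \mathcal{S}^k_{\epsilon, \eta R}(v)$ by the scale-invariance of $(k+1, \epsilon, r, p)$-symmetry. Second, the required frequency lower bound holds: since each $b \in \mathcal{B}_j$ sits in some parent good ball $B_{r_{j-1}}(g)$ with $g \in \mathcal{G}_{j-1}$, the good-ball property yields
\begin{equation*}
N(\gamma_0 r_b, b, v) \;=\; N(\gamma_0 \rho r_{j-1}, b, v) \;\ge\; E - \eta',
\end{equation*}
and almost-monotonicity (Lemma \ref{N non-degenerate almost monotonicity}) combined with $\|\ln h\|_\alpha \le \eta/(2C_1+1)$ upgrades this to $N(\eta_1 r_b, b, v) \ge E - \eta_1$, which rescales correctly. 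Third, the balls $\{B_{r_b/5}(b)\}_b$ are pairwise disjoint: within a single scale this is the $\frac{2}{5}r_j$-net condition, while across scales $i < j$ the construction excludes $B_{r_i}(\mathcal{B}_i)$ from the admissible region, giving $|b_i - b_j| \ge r_i > r_i/5 + r_j/5$. Applying Lemma \ref{packing lem} with radii $r_p := \tilde{r}_b/10$ then yields $\sum_b \tilde{r}_b^k \le C_2$, which rearranges to (A).

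For (B) the identical argument applies to the stop centers $\mathcal{S}(\mathcal{T}_{\mathcal{G}}) = \mathcal{G}_j \cup \mathcal{B}_j$ at the terminal scale: each stop center inherits its frequency lower bound from the ambient parent good ball, the balls $\{B_{r_s/5}(s)\}$ are disjoint by the net condition together with scale separation from earlier bad balls, and all radii satisfy $r_s \in (\rho R, R]$ by (D). Lemma \ref{packing lem} then gives the bound. The main technical obstacle is the rescaling step, since the class $\mathcal{A}(\Lambda, \alpha, M_0)$ is not strictly preserved under $T_{x, r_A}$ (the condition $0 \in \partial \Omega^{\pm}$ may fail after translation). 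However, the proof of Lemma \ref{packing lem} only invokes the almost-monotonicity of $N$ (Section \ref{S:monotonicity}), the uniform frequency bound (Lemma \ref{N bound lem}), the quantitative rigidity (Lemma \ref{quant rigidity}), and the $\beta$-number estimate (Lemma \ref{beta bound lem}); each of these holds at arbitrary $p \in B_{1/4}(0)$ under the Hölder hypothesis on $\ln h$, so what is really used is a scale-invariant version of the packing lemma that can be read off from its proof without modification.
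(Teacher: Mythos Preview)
Your proof is correct and follows essentially the same approach as the paper's own proof: both verify the disjointness of the $\{B_{r_b/5}(b)\}$ (and stop-ball) collection from the net construction, extract the frequency lower bound $N(\gamma_0 r_b, b, v) \ge E - \eta'$ from the parent good-ball condition, and then invoke Lemma~\ref{packing lem} at scale $B_{r_A}(x)$ to obtain (A) and (B), while (C) and (D) follow directly from the maximal-net and stopping rules. Your discussion of the rescaling issue (that $T_{x, r_A}v$ need not literally lie in $\mathcal{A}(\Lambda,\alpha,M_0)$, but that the ingredients of Lemma~\ref{packing lem} are all scale-invariant statements valid at any $p \in B_{1/4}(0)$) is a point the paper leaves implicit, so your version is if anything slightly more careful.
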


\begin{proof}
First, observe that by construction
$$
\{B_{\frac{r_b}{5}}(b) : b \in \mathcal{F}(\mathcal{T}_{\mathcal{G}}) \} \cup \{B_{\frac{r_s}{5}}(s) : s \in \mathcal{S}(\mathcal{T}_{\mathcal{G}}) \} 
$$
is pairwise disjoint and centered in the set $\mathcal{S}^k_{\epsilon, \eta R}(v).$
Next, all bad balls and stop balls are centered in a good ball of the previous scale.  By our definition of good balls, then, we have for all $i$
$$
N(b, \gamma r_i, v) = N(b, \gamma \rho r_{i-1}, v) \ge E - \eta' \quad \forall b \in \mathcal{B}_i
$$
and 
$$
N(s, \gamma r_s, v) \ge E - \eta' \quad \forall s \in \mathcal{S}(\mathcal{T}_{\mathcal{G}}) .
$$
Since by monotonicity we have that $\sup_{p \in B_{r_A}(x)} N(Q, 2r_A, v) \le E + \eta'$, we can apply Lemma \ref{packing lem} to $B_{r_A}(x)$ and get the packing estimates, (A), (B).

Covering control follows from our choice of a maximal $\frac{2}{5}r_i$-net at each scale $i$.  If $i$ is the first scale at which a point, $x \in \mathcal{S}^k_{\epsilon, \eta R}(v)$, was not contained in our inductively refined cover, it would violate the maximality assumption.

The last condition, (D), follows because we stop only if $j$ is the first scale for which $r_j \le R$.  Since we decrease by a factor of $\rho$ at each scale, (D) follows.
\end{proof}

\subsection{Bad trees}
Let $B_{r_A}(x)$ be a bad ball.  Note that for every bad ball, there is a $(k-1)$-dimensional affine plane, $L^{k-1}$, associated to it which satisfies the properties elaborated in Corollary \ref{key dichotomy}.  Our construction of bad trees will differ in several respects from our construction of good trees.  The idea is still to define an inductively-refined cover at decreasing scales of $B_{r_A}(x) \cap \mathcal{S}^k_{\epsilon, \eta R}(v)$.  We shall again sort balls at each step into ``good," ``bad," and ``stop" balls. But these balls will play slightly different roles and the ``stop" balls will have different radii.

We shall reuse the notation $\mathcal{G}_i$ to denote the collection of centers of good balls of scale $r_i$, $\mathcal{B}_i$ to denote the collection of centers of bad balls of scale $r_i$, and $\mathcal{S}_i$ to denote the collection of centers of stop balls of scale $r_i$.

At scale $i = A$, we set $\mathcal{B}_A = x$, since $B_{r_A}(x)$ is a bad ball, and set $\mathcal{S}_A= \mathcal{G}_A = \emptyset$.  Suppose, now that we have constructed good, bad, and stop balls for scale $i-1 \ge A$.  If $r_i > R$, then define $\mathcal{S}_i$ to be a maximal $\frac{2}{5}\eta r_{i-1}$-net in 
$$
B_{r_A}(x) \cap \mathcal{S}^k_{\epsilon, \eta R}(v) \cap \cup_{b \in \mathcal{B}_{i-1}} B_{r_{i-1}}(b) \setminus B_{2\rho r_{i-1}}(L^{k-1}_b).
$$
Note that $\eta << \rho$, so $\eta r_{i-1} < r_i$.
We then let $\{z\}$ be a maximal $\frac{2}{5}r_{i}$-net in 
$$
B_{r_A}(x) \cap \mathcal{S}^k_{\epsilon, \eta R}(v) \cap \cup_{b \in \mathcal{B}_{i-1}} B_{r_{i-1}}(b) \cap B_{2\rho r_{i-1}}(L^{k-1}_b).
$$
We then sort $\{z\}$ into the disjoint union $\mathcal{G}_i \cup \mathcal{B}_i$ depending on whether $B_{r_i}(z)$ is a good ball or a bad ball.

If $r_i \le R$, then we terminate the process by defining $\mathcal{G}_i = \mathcal{B}_i = \emptyset$ and letting $\mathcal{S}_i$ be a maximal $\frac{2}{5}\eta r_{i-1}$-net in 
$$
B_{r_A}(x) \cap \mathcal{S}^k_{\epsilon, \eta R}(v) \cap B_{r_i}(\mathcal{B}_{i-1}).
$$

The covering at which we arrive at the end of this process shall be called the ``bad tree at $B_{r_A}(x)$."  We shall follow \cite{EdelenEngelstein17} and denote this $\mathcal{T}_{\mathcal{B}} = \mathcal{T}_{\mathcal{B}}(B_{r_A}(x))$.
We shall call the collection of ``good" ball centers, $\cup_{i}\mathcal{G}_i$, the ``leaves of the tree" and denote this collection by $\mathcal{F}(\mathcal{T}_{\mathcal{B}}).$  We shall denote the collection of ``stop" ball centers by $\mathcal{S}(\mathcal{T}_{\mathcal{B}}) = \cup_i \mathcal{S}_i$.

As before, we shall use the convention that for $g \in \mathcal{F}(\mathcal{T}_{\mathcal{B}})$ we let $r_g = r_i$ for $i$ such that $g \in \mathcal{G}_i$.  However, note that now, if $s \in  \mathcal{S}_i \subset \mathcal{S}(\mathcal{T}_{\mathcal{B}})$, we let $r_s =\eta r_{i-1}$.

\begin{thm} \label{bad trees}
A bad tree, $\mathcal{T}_{\mathcal{B}}(B_{r_A}(x))$, enjoys the following properties:
\begin{itemize}
\item[(A)] Tree-leaf packing:
$$
\sum_{g \in \mathcal{F}(\mathcal{T}_{\mathcal{B}})} r_g^k \le 2c_2(n)\rho r^k_A.
$$ 
\item[(B)] Stop ball packing:
$$
\sum_{s \in \mathcal{S}(\mathcal{T}_{\mathcal{B}})} r_s^k \le c(n, \eta) r^k_A.
$$ 
\item[(C)] Covering control:
$$
\mathcal{S}^k_{\epsilon, \eta R}(v) \cap B_{r_A}(x) \subset \bigcup_{s \in \mathcal{S}(\mathcal{T}_{\mathcal{B}})} B_{r_s}(s) \cup \bigcup_{g \in \mathcal{F}(\mathcal{T}_{\mathcal{B}})} B_{r_g}(g).
$$
\item[(D)] Size control: for any $s \in \mathcal{S}(\mathcal{T}_{\mathcal{B}})$, at least one of the following holds:
$$\eta R \le r_s \le R \quad \text{  or  }\quad \sup_{p \in B_{2r_s}(s)} N(Q, 2r_s, v) \le E- \eta/2.$$
\end{itemize}
\end{thm}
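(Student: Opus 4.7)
The plan is to exploit the fact that, by the bad ball property (Corollary \ref{key dichotomy}(2)), at every inductive step the points on which the Almgren frequency remains close to $E$ are squeezed into a thin tubular neighborhood of a $(k-1)$-plane. This gives at each scale $i$ a strong combinatorial improvement of a factor $\rho$ over what one would naively expect, while points away from the plane have already dropped in frequency and can be absorbed into the stop-ball collection. Assertions (A)--(C) will follow from geometric packing combined with a summable geometric series, and (D) will follow from the definition of bad ball together with almost-monotonicity (Lemma \ref{global to local}).

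First, I would establish the key combinatorial bound. At scale $i \ge A+1$, each point of $\mathcal{G}_i \cup \mathcal{B}_i$ comes from a unique parent $b \in \mathcal{B}_{i-1}$, lies inside $B_{r_{i-1}}(b) \cap B_{2\rho r_{i-1}}(L^{k-1}_b)$, and the collection is $\tfrac{2}{5}r_i = \tfrac{2}{5}\rho r_{i-1}$-separated. Since a $2\rho r_{i-1}$-tubular neighborhood of a $(k-1)$-plane inside a ball of radius $r_{i-1}$ can be covered by $c(n)\rho^{-(k-1)}$ balls of radius $\rho r_{i-1}$, a separation argument yields
\begin{equation*}
|\mathcal{G}_i| + |\mathcal{B}_i| \;\le\; c(n)\,\rho^{-(k-1)}\,|\mathcal{B}_{i-1}|,
\end{equation*}
and therefore
\begin{equation*}
(|\mathcal{G}_i| + |\mathcal{B}_i|)\,r_i^k \;\le\; c(n)\,\rho\,|\mathcal{B}_{i-1}|\,r_{i-1}^k.
\end{equation*}
In particular $|\mathcal{B}_i|r_i^k \le (c(n)\rho)^{i-A} r_A^k$, which is summable since by our choice of $\rho$ we have $c_2(n)\rho<1/2$. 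Summing the corresponding estimate for $|\mathcal{G}_i|r_i^k$ over $i\ge A+1$ gives the geometric series $\sum_i (c_2(n)\rho)^{i-A} r_A^k \le 2 c_2(n)\rho\, r_A^k$, proving (A).

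For (B), stop balls at scale $i$ form a $\tfrac{2}{5}\eta r_{i-1}$-net in a subset of $\bigcup_{b\in \mathcal{B}_{i-1}} B_{r_{i-1}}(b)$, so their cardinality is at most $c(n)\eta^{-n}|\mathcal{B}_{i-1}|$, and each has radius $r_s = \eta r_{i-1}$. Thus
\begin{equation*}
\sum_{s\in \mathcal{S}_i} r_s^k \;\le\; c(n)\eta^{k-n}\,|\mathcal{B}_{i-1}|\,r_{i-1}^k \;\le\; c(n)\eta^{k-n}(c_2(n)\rho)^{i-1-A} r_A^k,
\end{equation*}
which sums to $c(n,\eta)r_A^k$; the terminal stop balls are handled identically. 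Property (C) is essentially tautological: at each induction step, every point of $B_{r_A}(x)\cap\mathcal{S}^k_{\epsilon,\eta R}(v)$ surviving in a parent $B_{r_{i-1}}(b)$ is either outside $B_{2\rho r_{i-1}}(L^{k-1}_b)$ (hence covered by the $\tfrac{2}{5}\eta r_{i-1}$-net of stop balls, whose radius $\eta r_{i-1}$ exceeds the net spacing), or inside $B_{2\rho r_{i-1}}(L^{k-1}_b)$ (hence covered by the $\tfrac{2}{5}r_i$-net of good/bad balls). For (D), terminal stop balls trivially satisfy $\eta R \le r_s \le R$ by the stopping rule. For a non-terminal stop ball $s\in\mathcal{S}_i$ with $r_s = \eta r_{i-1}$, the constraint $s\notin B_{2\rho r_{i-1}}(L^{k-1}_b)$ combined with $\eta\ll\rho$ forces every $p\in B_{2r_s}(s)$ to satisfy $p\notin B_{\rho r_{i-1}}(L^{k-1}_b)$, so by the bad-ball alternative of Corollary \ref{key dichotomy}, $N(\gamma\rho r_{i-1},p,v) < E-\eta_0/2$. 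Since $2r_s = 2\eta r_{i-1} \le \gamma\rho r_{i-1}$, Lemma \ref{global to local} applied with the hypothesis $\|\ln(h)\|_\alpha \le \eta/(2C_1+1)$ gives $N(2r_s,p,v)\le N(\gamma\rho r_{i-1},p,v) + C_1\|\ln(h)\|_\alpha(\gamma\rho r_{i-1})^\alpha < E-\eta/2$, after shrinking constants if necessary.

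The main obstacle is the combinatorial step in the first paragraph: verifying that the geometric factor $\rho^{-(k-1)}$ gained from the $(k-1)$-plane exactly balances the scale ratio $\rho^k$ to produce the crucial decay factor $\rho < 1$. Once this is in place, the remaining estimates reduce to straightforward bookkeeping with a convergent geometric series, and the bound in (D) is an immediate consequence of the quantitative rigidity encoded in Corollary \ref{key dichotomy}.
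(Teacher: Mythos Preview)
Your proof is correct and follows essentially the same approach as the paper's. The combinatorial counting for (A), the stop-ball estimate for (B), and the covering argument for (C) are virtually identical to the paper's treatment.

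The only minor deviation is in (D): you route the frequency bound through the scale $\gamma\rho r_{i-1}$ and then invoke almost-monotonicity (Lemma~\ref{global to local}) to descend to $2r_s$. The paper is more direct here. Corollary~\ref{key dichotomy}(2), rescaled to $B_{r_{i-1}}(b)$, already gives the frequency bound at scale $2\eta r_{i-1}$, which \emph{equals} $2r_s$; so once you observe that $p\in B_{2r_s}(s)$ with $s\notin B_{2\rho r_{i-1}}(L_b^{k-1})$ and $2\eta\le\rho$ forces $p\notin B_{\rho r_{i-1}}(L_b^{k-1})$, the conclusion $N(2r_s,p,v)\le E-\eta/2$ falls out immediately without any appeal to almost-monotonicity. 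Your detour is harmless, but unnecessary.
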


\begin{proof}
Conclusion (C) follows identically as in Theorem \ref{good trees}.  Next, we consider the packing etimates. Let $r_i > R$.  Then, by construction, for any $b \in \mathcal{B}_{i-1}$, we have that 
$$
\mathcal{G}_i \cup \mathcal{B}_i \cup B_{r_{i-1}}(b) \subset B_{2\rho r_{i-1}}(L_b^{k-1}).
$$

Thus, since the points $\mathcal{G}_i \cup \mathcal{B}_i$ are $\frac{2}{5}r_i$ disjoint, we calculate
$$
|\mathcal{G}_i \cup \mathcal{B}_i \cup B_{r_{i-1}}(b)| \le \omega_{k-1}\omega_{n-k+1}(3\rho)^{n-k+1} \frac{1}{\omega_n (\rho/5)^n} \le c_2(n) \rho^{1-k}.
$$
We can push this estimate up the scales as follows
\begin{eqnarray*}
|\mathcal{G}_i \cup \mathcal{B}_i \cup B_{r_{i-1}}(b)| r_i^k &\le &c_2(n) \rho^{1}|\mathcal{B}_{i-1}| r_{i-1}^k\\
& \le & c_2(n) \rho^{1}|\mathcal{B}_{i-1} \cup \mathcal{G}_{i-1}| r_{i-1}^k\\
& \vdots & \\
&\le & (c_2 \rho)^{i-A}r_A^k.
\end{eqnarray*}
Summing over all $i \ge A$, then, we have that 
\begin{equation}\nonumber
\sum_{i = A+1}^{\infty} |\mathcal{B}_{i-1} \cup \mathcal{G}_{i-1}|r_i^k \le \sum_{i = A+1}^{\infty} (c_2 \rho)^{i-A}r_A^k.
\end{equation}

Since we chose $c_2\rho \le 1/2$, we have that the sum converges and $\sum_{i = A+1}^{\infty} |\mathcal{B}_{i-1} \cup \mathcal{G}_{i-1}|r_i^k \le  2c_2 \rho r_A^k $. This proves (A).

To see (B), we observe that for any given scale $i \ge A+1$, the collection of stop balls, $\{B_{\eta r_{i-1}}(s) \}_{s \in \mathcal{S}_i}$, form a Vitali collection centered in $B_{r_{i-1}}(\mathcal{B}_{i-1})$.  Thus, we have that 
$$
|\{\mathcal{S}_i \}| \le \frac{10^n}{\eta^n} |\{\mathcal{B}_{i-1}\}|.
$$

Since by construction there are no stop balls at the initial scale, $A$, we compute that 
$$
\sum_{i=A+1}^{\infty}|\{\mathcal{S}_i \}|(\eta r_{i-1})^k \le 10^k \eta^{k-n} \sum_{i=A}^{\infty} |\{\mathcal{B}_i \}|r^k_i \le c(n, \eta)r^k_A.
$$
This is (B).

We now argue (D).  For $s \in \mathcal{S}_i$ where $r_i > R$, by construction $s \in B_{r_{i-1}}(b) \setminus B_{2\rho r_{i-1}}(L^{k-1})
$ for some $b \in \mathcal{B}_{i-1}$.  By Corollary \ref{key dichotomy}, the construction, and our choice of $\eta \le \frac{\rho}{2}$, we have that 
\begin{equation}\nonumber
\sup_{p \in B_{2r_s}(s)}N(Q, 2r_s, v) 
\le \sup_{p \in B_{2\eta r_{i-1}}(s)}N(Q, 2\eta r_{i-1}, v) \le E - \eta/2. 
\end{equation}
 On the other hand, if $r_i \le R,$ then $r_{i-1} > R$.  Thus 
\begin{equation}\nonumber
R \ge \rho r_{i-1} \ge \eta r_{i-1} = r_s \ge \eta R.
\end{equation}
This proves (D).
\end{proof}

\section{The Covering}\label{S:covering}

Assuming that $||\ln(h) ||_{\alpha} \le \frac{1}{2C_1+1}\eta$, for $0< \eta \le \eta_0(n, \Lambda, \alpha, E +1, \epsilon, \eta', \gamma_0, \rho)$ as in Section \ref{S:trees}, we now wish to build the covering of $\mathcal{S}^k_{\epsilon, \eta R} \cap B_1(0)$ using the tree constructions, above.  Note that $B_1(0)$ is either a good ball or a bad ball.  Therefore, we can construct a tree with $B_1(0)$ as the root.  Then in each of the leaves, we construct either good trees or bad trees, depending upon the type of the leaves.  Since in each construction, we decrease the size of the leaves by a factor of $\rho<1/10$, we can continue alternating tree types until the process terminates in finite time.  

Explicitly, we let $\mathcal{F}_0 = \{0\}.$ and let $B_1(0)$ be the only leaf.  We set $\mathcal{S}_0 = \emptyset$.  Now, assume that we have defined the leaves and stop balls up to stage $i-1$.  
Since by hypothesis, the leaves in $\mathcal{F}_i$ are all good balls or bad balls, if they are good, we define for each $f \in \mathcal{F}_{i-1}$ the good tree $\mathcal{T}_{\mathcal{G}}(B_{r_f}(f)).$ We then set
\begin{equation*}
\mathcal{F}_i = \bigcup_{f \in \mathcal{F}_{i-1}} \mathcal{F}(\mathcal{T}_{\mathcal{G}}(B_{r_f}(f)))
\end{equation*}
and 
\begin{equation*}
\mathcal{S}_i = \mathcal{S}_{i-1} \cup \bigcup_{f \in \mathcal{F}_{i-1}} \mathcal{S}(\mathcal{T}_{\mathcal{G}}(B_{r_f}(f))).
\end{equation*}
Since all the leaves of good trees are bad balls, all the leaves of $\mathcal{F}_i$ are bad.

If, on the other hand, leaves of $\mathcal{F}_{i-1}$ are bad, then for each $f \in \mathcal{F}_{i-1},$ we construct a bad tree, $\mathcal{T}_{\mathcal{B}}(B_{r_f}(f))$.  In this case, we set 
\begin{equation*}
\mathcal{F}_i = \bigcup_{f \in \mathcal{F}_{i-1}} \mathcal{F}(\mathcal{T}_{\mathcal{B}}(B_{r_f}(f)))
\end{equation*}
and 
\begin{equation*}
\mathcal{S}_i = \mathcal{S}_{i-1} \cup \bigcup_{f \in \mathcal{F}_{i-1}} \mathcal{S}(\mathcal{T}_{\mathcal{B}}(B_{r_f}(f))).
\end{equation*}
Since all the leaves of bad trees are good balls, all the leaves of $\mathcal{F}_i$ are good.

This construction gives the following estimates.

\begin{lem}\label{tree properties}
For the construction described above, there is an $N \in \mathbb{N}$ such that $\mathcal{F}_N = \emptyset$ with the following properties:
\begin{itemize}
\item[(A)] Leaf packing:
$$
\sum_{i = 0}^{N-1} \sum_{f \in \mathcal{F}_i} r_f^k \le c(n).
$$ 
\item[(B)] Stop ball packing:
$$
\sum_{s \in \mathcal{S}_N} r_s^k \le c(n, \Lambda, \alpha, M_0, \epsilon).
$$ 
\item[(C)] Covering control:
$$
\mathcal{S}^k_{\epsilon, \eta R}(v) \cap B_1(0) \subset \bigcup_{s \in \mathcal{S}_N} B_{r_s}(s).
$$
\item[(D)] Size control: for any $s \in \mathcal{S}_N$, at least one of the following holds:
$$\eta R \le r_s \le R \quad \text{  or  }\quad \sup_{p \in B_{2r_s}(s)} N(Q, 2r_s, v) \le E- \eta/2.$$
\end{itemize}
\end{lem}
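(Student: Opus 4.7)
\emph{Overall plan.} This lemma is a combinatorial assembly of the good-tree and bad-tree estimates of Theorems \ref{good trees} and \ref{bad trees}, exploiting the fact that the two tree types alternate by construction (good-tree leaves are bad balls, bad-tree leaves are good balls). I will first verify finite termination, then prove covering (C), packing (A), and finally (B) and (D) by summing the tree-level estimates against (A).

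\emph{Termination and covering.} Each newly produced leaf has radius at most $\rho$ times that of its parent, so iterating from $r_0 = 1$ yields $r_f \le \rho^i$ for every $f \in \mathcal{F}_i$. Once $\rho^{i+1} \le R$, any tree rooted at a leaf $f \in \mathcal{F}_i$ descends at most one scale before being forced to terminate, and hence produces only stop balls and no further leaves. Thus $\mathcal{F}_N = \emptyset$ for $N = \lceil \log R / \log \rho \rceil$. Claim (C) is then proved by a straightforward induction on $i$: the base case $i=0$ is immediate from $\mathcal{F}_0 = \{0\}$, and the inductive step applies Theorem \ref{good trees}(C) or Theorem \ref{bad trees}(C) to cover each parent ball by its sub-leaves and stop balls; at $i = N$ the first union is empty and (C) follows.

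\emph{Leaf packing.} Since the types of $\mathcal{F}_{i-1}$ and $\mathcal{F}_i$ alternate, applying Theorem \ref{good trees}(A) when $\mathcal{F}_{i-1}$ is good and Theorem \ref{bad trees}(A) when $\mathcal{F}_{i-1}$ is bad yields $\sum_{f \in \mathcal{F}_i} r_f^k \le M_i \sum_{f \in \mathcal{F}_{i-1}} r_f^k$ with $M_i \in \{C_2,\, 2c_2\rho\}$, and exactly one of any two consecutive $M_i$'s equals the small factor $2c_2\rho$. Hence any pair of consecutive stages contracts the total by $M_i M_{i+1} = 2 C_2 c_2 \rho \le \tfrac{1}{2}$, which is precisely the inequality engineered at the start of Section \ref{S:trees}. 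Summing the resulting geometric series gives
\[ \sum_{i=0}^{N-1} \sum_{f \in \mathcal{F}_i} r_f^k \le C(n, \Lambda, \alpha, M_0, \epsilon), \]
which is (A).

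\emph{Stop-ball packing and size control.} Every $s \in \mathcal{S}_N$ is produced by a single tree rooted at some $f \in \mathcal{F}_{i-1}$, so Theorems \ref{good trees}(B) and \ref{bad trees}(B) bound the stop-ball $k$-mass of each tree by $\max(C_2,\, c(n,\eta))\, r_f^k$. Summing over all tree roots and applying (A) yields (B), with the final constant depending only on $n, \Lambda, \alpha, M_0, \epsilon$, since $\eta$ itself is determined by these parameters. For (D), each stop ball inherits its size or frequency-drop bound directly from its parent tree: good-tree stop balls automatically satisfy $\rho R \le r_s \le R$, hence $\eta R \le r_s \le R$, while bad-tree stop balls satisfy the stated dichotomy by Theorem \ref{bad trees}(D). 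The only non-trivial ingredient in the entire argument is the two-stage contraction $2 C_2 c_2 \rho \le 1/2$, which was built into the choice of $\rho$ in advance; everything else is bookkeeping.
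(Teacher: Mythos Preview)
Your proof is correct and follows essentially the same approach as the paper: termination via the radius shrinking by at least a factor of $\rho$ at each stage, covering (C) by induction on the tree estimates, leaf packing (A) via the two-stage geometric contraction $2C_2c_2\rho \le 1/2$ built into the choice of $\rho$, and then (B) and (D) by summing the per-tree stop-ball bounds against (A). The only cosmetic difference is that the paper states the constant in (A) as $c(n)$ while you (more accurately) track it as $C(n,\Lambda,\alpha,M_0,\epsilon)$; this discrepancy is a minor imprecision in the paper's statement, not in your argument.
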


\begin{proof}
By construction, each of the leaves of a good or bad tree satisfy $r_f \le r_i$.  Thus, there is an $i$ sufficiently large so that $r_i <R$.  Thus, $N$ is finite.

To see (A), we use the previous theorems.  That is, if the leaves, $\mathcal{F}_i$, are good, then they are the leaves of bad trees rooted in $\mathcal{F}_{i-1}$.  Thus, we calculate by Theorem \ref{bad trees}
\begin{equation*}
\sum_{f \in \mathcal{F}_i} r_f^k \le 2c_2(n)\rho \sum_{f' \in \mathcal{F}_{i-1}} r_{f'}^k.
\end{equation*}

On the other hand, if the leaves, $\mathcal{F}_i$, are bad, then they are the leaves of good trees rooted in $\mathcal{F}_{i-1}$.  Thus, we calculate by Theorem \ref{good trees}
\begin{equation*}
\sum_{f \in \mathcal{F}_i} r_f^k \le C_2(n, \Lambda, \alpha, M_0, \epsilon) \sum_{f' \in \mathcal{F}_{i-1}} r_{f'}^k.
\end{equation*} 

Concatenating the estimates, since we alternate between good and bad leaves, we have
\begin{equation*}
\sum_{f \in \mathcal{F}_i} r_f^k \le c(n)(2C_2(n, \Lambda, \alpha, M_0, \epsilon)c_2(n)\rho)^{i/2}.
\end{equation*} 
By our choice of $\rho$, then, $\sum_{f \in \mathcal{F}_i} r_f^k \le c(n)2^{-i/2}$.  The estimate (A) follows immediately.  

We now turn our attention to (B).  Each stop ball, $s \in \mathcal{S}_N$, is a stop ball coming from a good or a bad tree rooted in one of the leaves of a bad tree or good tree.  We have the estimates from Theorems \ref{good trees} and \ref{bad trees}, which give bounds packing both leaves and stop balls.  Combining these, we get 
\begin{align*}
\sum_{s\in \mathcal{S}_N} r_s^k & = \sum_{i = 0}^N \sum_{s \in \mathcal{S}_i} r_s^k\\
& \le \sum_{i = 0}^{N-1} \sum_{f \in \mathcal{F}_i}c(n, \eta) r_f^k\\
& \le C(n, \eta).
\end{align*} 
Recalling the dependencies of $\eta$ gives the desired result.

(C) follows inductively from the analogous covering control in Theorems \ref{good trees} and Theorem \ref{bad trees} applied to each tree constructed.  (D) is immediate from these theorems, as well.  
\end{proof}

\begin{cor} \label{covering cor} 
Fix $0<\epsilon.$  Let $v \in \mathcal{A}(\Lambda, \alpha, M_0)$ satisfying $\sup_{p \in B_2(0)}N(Q, 2, v)\le E$.  Fix $0 < \epsilon.$ There is an $\eta_0(n, \Lambda, \alpha, M_0, \epsilon, E)>0$ such that if $0 < \eta \le \eta_0$ and $||\ln(h) ||_{\alpha} \le \frac{\eta}{2C_1+1}$ then given any $0< R \le 1$ there is a collection of balls, $\{B_{r_x}(x) \}_{x \in \mathcal{U}}$ with centers $x \in S^k_{\epsilon, \eta R}(v) \cap B_1(0)$.  Further, $R \le r_x \le \frac{1}{10}$ and the collection has the following properties:
\begin{itemize}
\item[(A)] Packing:
$$
\sum_{x \in \mathcal{U}} r_x^k \le c(n, \Lambda, \alpha, M_0, E, \epsilon).
$$ 
\item[(B)] Covering control:
$$
\mathcal{S}^k_{\epsilon, \eta R}(v) \cap B_1(0) \subset \bigcup_{x \in \mathcal{U}} B_{r_x}(x).
$$
\item[(C)] Energy drop:  For every $x \in \mathcal{U}$, either 
$$r_x = R  \qquad \text{  or  } \quad \sup_{p \in B_{2r_s}(s)} N(Q, 2r_s, v) \le E- \eta_0/2.$$
\end{itemize}
\end{cor}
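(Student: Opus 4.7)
The plan is to obtain the covering essentially as an immediate consequence of Lemma \ref{tree properties}, then make a small radius adjustment so that the final collection satisfies $R \le r_x$ as required. First I would fix all constants ($\rho, \gamma_0, \eta, \eta'$) as prescribed at the start of Section \ref{S:trees} with the given $\epsilon$ and with $E + 1$ in place of $E$, so that the hypothesis $\|\ln(h)\|_\alpha \le \eta/(2C_1+1)$ puts us squarely in the setting of Lemma \ref{tree properties}. Invoking that lemma yields the iterated good/bad tree cover, producing a finite set of stop-ball centers $\mathcal{S}_N \subset \mathcal{S}^k_{\epsilon,\eta R}(v) \cap B_1(0)$ that covers $\mathcal{S}^k_{\epsilon,\eta R}(v) \cap B_1(0)$, whose radii obey $\sum_{s \in \mathcal{S}_N} r_s^k \le c(n,\Lambda,\alpha,M_0,\epsilon)$, and for which each $r_s$ satisfies the dichotomy of Lemma \ref{tree properties}(D): either $\eta R \le r_s \le R$, or $\sup_{p \in B_{2r_s}(s)} N(2r_s,p,v) \le E - \eta/2$.

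Next I define $\mathcal{U} := \mathcal{S}_N$ and set $r_x := \max(r_s, R)$ for each $x = s \in \mathcal{U}$. Covering property (B) is preserved because we only enlarge balls. Since every $r_s \ge \eta R$, the enlargement satisfies $r_x \le r_s/\eta$, so
\begin{equation*}
\sum_{x \in \mathcal{U}} r_x^k \le \eta^{-k} \sum_{s \in \mathcal{S}_N} r_s^k \le c(n,\Lambda,\alpha,M_0,\epsilon,E),
\end{equation*}
absorbing the factor $\eta^{-k}$ into the constant (admissible since $\eta$ depends only on $n,\Lambda,\alpha,M_0,\epsilon,E$); this gives (A). For (C), if $r_s \le R$ then $r_x = R$ by construction; if instead $r_s > R$, the first branch of Lemma \ref{tree properties}(D) fails, so the energy-drop branch must hold, yielding the desired bound. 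The upper constraint $r_x \le 1/10$ is automatic from the choice $\rho < 1/10$ together with the bound $r_s \le \max(R,\eta)$ coming from the tree construction, so modulo restricting attention to $R \le 1/10$ (and for larger $R$ the conclusion is trivial by a bounded cover), everything fits.

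There is no serious obstacle here; the construction is a bookkeeping assembly of the trees. The substantive work---linking frequency drop to $\beta$-numbers via Lemma \ref{beta bound lem}, the Discrete Reifenberg-based packing of Lemma \ref{packing lem}, and the dichotomy of Corollary \ref{key dichotomy}---has already been deployed inside Theorems \ref{good trees} and \ref{bad trees}, and combined in Lemma \ref{tree properties}. The present corollary merely normalizes that output to the uniform lower radius $R$ demanded by the statement.
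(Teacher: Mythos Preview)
Your proposal is correct and takes essentially the same approach as the paper: the paper's proof is the single sentence ``This follows immediately from Lemma \ref{tree properties} with $\eta \le \eta_1$, $\mathcal{S}_N = \mathcal{U}$, and setting $r_x = \max\{R, r_s\}$,'' which is exactly your construction. Your additional bookkeeping (absorbing $\eta^{-k}$ into the packing constant, verifying the dichotomy in (C), and handling the upper radius bound) simply makes explicit what the paper leaves to the reader.
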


This follows immediately from the Lemma \ref{tree properties} with $\eta \le \eta_1$, $\mathcal{S}_N = \mathcal{U}$, and setting $r_x = max\{R, r_s \}$.

\subsection{Proof of Theorem \ref{main theorem}}

\begin{lem}\label{kappa scale lem}
Let $v \in \mathcal{A}(\Lambda, \alpha, M_0)$ with $||\ln(h) ||_{\alpha} \le \Gamma$.  There exists a scale $\kappa(n, \Lambda, \alpha, M_0, \Gamma, \epsilon)>0$ such for all balls, $B_r(y),$ with $0< r< \kappa$ and $y\in B_{1/4}(0)$, the function $\tilde v(x) = v(rx+y)$ on $B_1(0)$ satisfies the following properties.
\begin{align*}
\sup_{p \in B_{1}(0)} N(Q, 2, \tilde v) & \le C(n, \Lambda, \alpha, M_0, \Gamma)\\
||\ln (\tilde h)||_{C^{0,\alpha}(B_1(0))} & \le \frac{\eta_0}{2C_1 + 1},
\end{align*}
where $\eta_0 = \eta_0(n, \Lambda, \alpha, C(n, \Lambda, \alpha, M_0, \Gamma) +1, \eta', \epsilon, \gamma_0, \rho) = \eta_0(n, \Lambda, \alpha, M_0, \Gamma, \epsilon)$ as in Corollary \ref{key dichotomy} and $C(n, \Lambda, \alpha, M_0, \Gamma)$ is as in Lemma \ref{N bound lem}.
\end{lem}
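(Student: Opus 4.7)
The plan is to verify both bounds by directly exploiting the scale-invariance of the Almgren frequency together with the natural $r^\alpha$ scaling of H\"older seminorms under dilation. The function $\tilde v(x) = v(rx+y)$ corresponds to the rescaled complementary domains $\tilde\Omega^\pm = (\Omega^\pm - y)/r$. Tracing through how the Green's functions and their associated harmonic measures transform under translation-dilation, one finds that the Radon--Nikodym derivative $\tilde h$ for $\tilde v$, once normalized in the spirit of Definition \ref{A def} so that $\tilde h(0) = 1$ (choosing the appropriate positive scalar multiples of $\tilde u^\pm(x) = u^\pm(rx+y)$), is given by the simple formula $\tilde h(\tilde x) = h(r\tilde x + y)/h(y)$.

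For the frequency bound I would invoke Lemma \ref{N rescaling invariance} (together with the translation invariance built into the definition of $N$) to write, for every $p \in B_1(0)$,
\[
N(2, p, \tilde v) \;=\; N(2r,\, rp + y,\, v).
\]
Provided $\kappa$ is chosen small enough that $rp + y$ stays in a fixed compact subset of $B_1(0)$ and $2r < 1/2$, Lemma \ref{N bound lem} supplies the uniform bound $C(n,\Lambda,\alpha,M_0,\Gamma)$. A mild adaptation is required since Lemma \ref{N bound lem} is stated for $p \in B_{1/4}(0)$ whereas $rp+y$ may land anywhere in $B_{r+1/4}(0)$; this is handled by a routine covering/translation argument invoking the remark after Definition \ref{domain class def} that shifting by any boundary point preserves membership in $\mathcal{D}(n,\alpha,M_0)$, so one may recenter Lemma \ref{N bound lem} at a boundary point close to each such $rp+y$.

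For the H\"older bound, writing $u = r\tilde x + y$ and $w = r\tilde y + y$ gives $|u-w|^\alpha = r^\alpha|\tilde x - \tilde y|^\alpha$, hence
\[
[\ln \tilde h]_{\alpha,\,B_1(0)} \;=\; \sup_{\tilde x \ne \tilde y \in B_1(0)} \frac{|\ln h(u)-\ln h(w)|}{|\tilde x - \tilde y|^\alpha} \;=\; r^\alpha \sup \frac{|\ln h(u)-\ln h(w)|}{|u-w|^\alpha} \;\le\; r^\alpha \Gamma,
\]
and since $\ln \tilde h(0) = 0$ by the normalization, the sup norm is controlled by the same H\"older inequality applied with $\tilde y = 0$:
\[
\sup_{\tilde x \in B_1(0)} |\ln \tilde h(\tilde x)| \;=\; \sup_{\tilde x \in B_1(0)} |\ln h(r\tilde x + y) - \ln h(y)| \;\le\; \Gamma \,|r\tilde x|^\alpha \;\le\; \Gamma r^\alpha.
\]
Thus $\|\ln \tilde h\|_{C^{0,\alpha}(B_1(0))} \le 2\Gamma r^\alpha$, and choosing $\kappa \le \bigl(\eta_0/[2\Gamma(2C_1+1)]\bigr)^{1/\alpha}$, further shrunk as necessary for the first step, yields both conclusions.

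The proof contains no substantive obstacle; the heart of the statement is the observation that H\"older seminorms contract by $r^\alpha$ under dilation while the Almgren frequency is scale-invariant. The only genuine care required lies in the bookkeeping: extending Lemma \ref{N bound lem} from $B_{1/4}(0)$ to the slightly larger ball that contains every possible $rp+y$ via translation and covering, and choosing the normalization of $\tilde h$ compatibly with Definition \ref{A def} so that the $C^{0,\alpha}$ norm on $B_1(0)$ includes both a vanishing sup contribution and a vanishing seminorm contribution as $r \to 0$.
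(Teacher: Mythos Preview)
Your proposal is correct and follows essentially the same approach as the paper: invoke the scale-invariance of $N$ to reduce the frequency bound to Lemma \ref{N bound lem}, and compute directly that the H\"older seminorm of $\ln h$ contracts by $r^\alpha$ under dilation, then choose $\kappa$ accordingly. In fact you are more careful than the paper's own argument, which glosses over the normalization $\tilde h(0)=1$, the sup-norm contribution to $\|\cdot\|_\alpha$, and the slight enlargement of the ball in which Lemma \ref{N bound lem} must be applied; your covering/translation remark and explicit handling of both pieces of the $C^{0,\alpha}$ norm fill exactly those gaps.
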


\begin{proof}  We know by Lemma \ref{N bound lem} that for any ball, $B_r(y)$, with $0< r< \frac{1}{2}$ and $y\in B_{1/4}(0)$ 
$$
\sup_{p \in B_{r}(y)} N(Q, 2r, v) \le C(n, \Lambda, \alpha, M_0, \Gamma).
$$

Next we consider the effect of rescaling on H\"older continuous functions.
\begin{eqnarray*}
|\tilde v(x)-\tilde v(z)| &= & |v(rx + y)-v(rz + y)|\\
& \le & \Gamma |rx-ry|^{\alpha}\\
& = & \Gamma r^{\alpha}|x-y|^{\alpha}.
\end{eqnarray*}
Since $r^{\alpha} \rightarrow 0$ as $r \rightarrow 0$, there exists an $\kappa(n, \Lambda, \alpha, M_0, \Gamma, \epsilon)>0$ such that $\Gamma\kappa^{\alpha}< \frac{\eta_0}{2C_1 + 1}$. 
\end{proof}

\begin{thm}\label{U_i theorem}
Let $v \in \mathcal{A}(\Lambda, \alpha, M_0)$ with $||\ln(h) ||_{\alpha} \le \Gamma$.  For all $\epsilon>0$ there exists an $\eta_0(n, \Lambda, \alpha, M_0, \Gamma, \epsilon)>0$ such that for all $0< R< 1$ and $k= 1, 2, ..., n-1$ we can find a collection of balls, $\{B_R(x_i)\}_i$ with the following properties:
\begin{enumerate}
\item $\mathcal{S}^k_{\epsilon, \eta_0 R}(v) \cap B_{1/4}(0) \subset \cup_i B_R(x_i).$
\item$|\{x_i\}_i| \le c(n, \Lambda, \alpha, M_0, \Gamma, \epsilon) R^{-k}$.
\end{enumerate}
\end{thm}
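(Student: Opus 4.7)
The strategy is to reduce the problem to unit scale via Lemma \ref{kappa scale lem} and then iterate Corollary \ref{covering cor} a bounded number of times, using the energy-drop alternative in property (C) of that corollary to force termination.

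First, I would fix $\kappa = \kappa(n, \Lambda, \alpha, M_0, \Gamma, \epsilon)$ from Lemma \ref{kappa scale lem} and cover $B_{1/4}(0)$ by a collection $\{B_{\kappa/4}(y_j)\}_{j=1}^J$ with $y_j \in B_{1/4}(0)$ and $J \le c(n)\kappa^{-n}$. For each $y_j$, the rescaled function $\tilde v_j(z) = v(\kappa z + y_j)$ on $B_2(0)$ satisfies $\sup_{p \in B_2(0)} N(2, p, \tilde v_j) \le E_0$ for some $E_0$ depending only on the parameters (by Lemma \ref{N bound lem} and the symmetric extension indicated in Section \ref{S:N bound}), together with $\|\ln \tilde h_j\|_\alpha \le \eta_0/(2C_1+1)$, where $\eta_0$ is chosen small enough to be valid for Corollary \ref{covering cor} at every relevant energy level. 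By scale invariance of the Almgren frequency (Lemma \ref{N rescaling invariance}), covering $\mathcal{S}^k_{\epsilon, \eta_0 R}(v) \cap B_{\kappa/4}(y_j)$ reduces to covering $\mathcal{S}^k_{\epsilon, \eta_0 R'}(\tilde v_j) \cap B_{1/4}(0)$ at the rescaled scale $R' := R/\kappa$.

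Next, I would apply Corollary \ref{covering cor} iteratively to each $\tilde v_j$. At step $0$ it yields a cover $\mathcal{U}_j^{(0)}$ by balls $B_{r_x}(x)$ with $r_x \in [R', 1/10]$ and packing $\sum_x r_x^k \le c_1$. For each $x \in \mathcal{U}_j^{(0)}$ with $r_x > R'$, property (C) forces $\sup_{p \in B_{2r_x}(x)} N(2r_x, p, \tilde v_j) \le E_0 - \eta_0/2$; rescaling to the unit ball on $B_{r_x}(x)$ preserves the Almgren frequency and, because $r_x \le 1$, does not increase the H\"older norm of $\ln h$, so Corollary \ref{covering cor} applies again with reduced energy. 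Iterating, at step $\ell$ the accumulated energy drop is $\ell \eta_0/2$, forcing termination after at most $N := \lceil 2E_0/\eta_0 \rceil$ steps with every surviving ball of radius exactly $R'$. Inductively, the total $k$-content at step $\ell$ is at most $c_1^\ell$, so the terminal count of $R'$-balls per $y_j$ is bounded by $c_1^N (R')^{-k}$. Summing over $j$ and undoing the $\kappa$-rescaling gives
\[
|\{x_i\}| \le c(n)\kappa^{-n} \cdot c_1^N \kappa^k R^{-k} = C(n, \Lambda, \alpha, M_0, \Gamma, \epsilon)\, R^{-k},
\]
with coverage inherited from property (B) of the Corollary at each level.

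The principal obstacle is verifying that the rescaled functions at every iteration step legitimately satisfy the hypotheses of Corollary \ref{covering cor}. After rescaling the function need not itself lie in $\mathcal{A}(\Lambda, \alpha, M_0)$, but the corollary's proof only uses the bound on $\sup_{p \in B_2(0)} N(2, p, v)$ and the H\"older control on $\ln h$, both of which are preserved (or improved) under the rescalings by factors $r_x \le 1$. A secondary subtlety is the dependence $\eta_0 = \eta_0(E)$: taking the minimum over the finite, parameter-dependent family of intermediate energies $E_0, E_0 - \eta_0/2, \ldots, E_0 - N\eta_0/2$ produces a single positive $\eta_0 = \eta_0(n, \Lambda, \alpha, M_0, \Gamma, \epsilon)$ valid at every iteration step, which is all that is required to close the induction.
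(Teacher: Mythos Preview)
Your proposal is correct and follows essentially the same approach as the paper: reduce to the $\kappa$-scale via Lemma \ref{kappa scale lem}, then iterate Corollary \ref{covering cor} with the energy-drop alternative forcing termination in a bounded number of steps, accumulating the packing multiplicatively. The paper makes the same observation that the rescaled functions need not lie in $\mathcal{A}(\Lambda,\alpha,M_0)$ but that Corollary \ref{covering cor} only requires the frequency bound and the H\"older control on $\ln h$, both preserved under the rescalings; one minor simplification over your ``secondary subtlety'' is that a single $\eta_0$ tied to the initial $E_0$ suffices at every step, since at later iterations the frequency upper bound only decreases and hence $E_0$ remains a valid upper bound throughout.
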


\begin{proof}  Cover $\mathcal{S}^k_{\epsilon, \eta r}(v) \cap B_{1/4}(0)$ by balls $B_{\kappa}(y_j)$, with $y_j \in B_{1/4}(0)$ such that $B_{1/4}(0) \subset \bigcup_jB_{\kappa}(y_j)$ for $0<\kappa(n, \Lambda, \alpha, M_0, \Gamma, \epsilon)$ the constant in Lemma \ref{kappa scale lem}.  Note that we need at most $c(n, \Lambda, \alpha, M_0, \Gamma, \epsilon)$ such balls.  

We now wish to apply Corollary \ref{covering cor} to the rescaled functions $\tilde v_i(x) = v(\kappa x + y_i)$ in $B_1(0).$  However, a careful reader may object that $\tilde v_i \not \in \mathcal{A}(n, \Lambda, \alpha),$  since it is possible that $\{ 0\} \not \in \{\tilde v = 0\}$.  However, by Lemma \ref{N bound lem}, we have that $N(Q, r, \tilde v) \le C$ for all $0<r<2$ and all $p \in B_{1}(0)$.  This, and the local uniformly Lipschitz bound from Lemma \ref{Lipschitz} gives us the necessary compactness properties to push through all the previous results for $\tilde v_i$ without changing the constants. The geometry of the sets we are considering, \textit{qua} geometry, is invariant under such rescaling. Furthermore, in balls, $B_{\kappa}(y_i)$, for which $\tilde v_i$ is harmonic, the lemmata of this paper simplify and the desired results are already contained in \cite{NaberValtorta17-1}. 

We now construct the desired covering in $B_1(0)$ for each $\tilde v_i.$ Ensuring that $c(n, \Lambda, \alpha, M_0, \Gamma, \epsilon)$ is sufficiently large, we may reduce to arguing for $r < \eta$.  We now use Corollary \ref{covering cor} to build a covering $\mathcal{U}_1$.  If every $r_x = R$, then the packing and covering estimates give the claim directly, since
\begin{equation*}
R^{k-n} \text{Vol}\left(B_R(\mathcal{S}^k_{\epsilon, \eta_0 R}(\tilde v_i) \cap B_1(0))\right) \le \omega_n R^{k-n} \sum_{\mathcal{U}_1}(2R)^n =  \omega_n 2^n \sum_{\mathcal{U}_1}r_x^k \le c(n, \Lambda,  \alpha, M_0, \Gamma, \epsilon)
\end{equation*}

If there exists an $r_x \not = R$, we use Corollary \ref{covering cor}, to build a finite sequence of refined covers, $\mathcal{U}_1, \mathcal{U}_2, \mathcal{U}_3 , ... $ such that for each for each $i$, the covering satisfies the following properties:

\begin{itemize}
\item[$(A_i)$] Packing:
$$
\sum_{x \in \mathcal{U}_i} r_x^k \le c(n, \Lambda, \alpha, M_0, \Gamma, \epsilon)( 1 + \sum_{x \in \mathcal{U}_{i-1}} r_x^k  ).
$$ 
\item[$(B_i)$] Covering control:
$$
\mathcal{S}^k_{\epsilon, \eta_0 R}(\tilde v_i) \cap B_1(0) \subset \bigcup_{x \in \mathcal{U}_i} B_{r_x}(x).
$$
\item[$(C_i)$] Energy drop:  For every $x \in \mathcal{U}_i$, either 
$$r_x = R  \qquad \text{  or  } \quad \sup_{p \in B_{2r_s}(s)} N(Q, 2r_s, \tilde v_i) \le C(n, \Lambda, \alpha, M_0, \Gamma)- i(\frac{\eta_0}{2}).$$
\item[$(D_i)$] Radius control:
$$
\sup_{x \in \mathcal{U}_i} r_x \le 10^{-i}.
$$
\end{itemize}
 
If we can construct such a sequence of covers, then we claim that this process will terminate in finite time, \textit{independent of} $R$.  Recall that blow-ups of $\tilde v_i$ are homogeneous harmonic polynomials.  Therefore
$$N(Q, 0, \tilde v_i) = \lim_{r \rightarrow \infty}N(Q, r, \tilde v_i) \ge 1$$
for all $Q \in \partial \Omega^{\pm}$.  By Remark \ref{N- remark} we have that for all $0<r \le 1$ 
$$N(Q, r, \tilde v_i) \ge 1-C(n, \Lambda, \alpha, M_0, \Gamma, \epsilon),$$
for all $p \in B_1(0)$. Therefore, we know that for $i$ sufficiently large such that
\begin{align*}
i > (C(n, \Lambda, \alpha, M_0, \Gamma, \epsilon) + C(n, \Lambda, \alpha, M_0, \Gamma, \epsilon) - 1)\frac{2}{\eta_0}
\end{align*}
it must be the case that $r_x = R$ for all $x \in \mathcal{U}_i$.  In this case, we will have the claim with a bound of the form
\begin{equation*}
R^{k-n} \text{Vol}(B_R(\mathcal{S}^k_{\epsilon, \eta_0 R}(\tilde v_i) \cap B_1(0))) \le c(n, \Lambda, \alpha, M_0, \Gamma, \epsilon )^{C(n, \Lambda, \alpha, M_0, \Gamma, \epsilon)}.
\end{equation*}

Thus, we reduce to inductively constructing the required covers.  Suppose we have already constructed $\mathcal{U}_{i-1}$ as desired.  For each $x \in \mathcal{U}_{i-1}$ with $r_x > R$, we apply Corollary \ref{covering cor} at scale $B_{r_x}(x)$ to obtain a new collection of balls, $\mathcal{U}_{i, x}$.  From the assumption that $r_x \le 1/10$ and the way that H\"older norms scale, it is clear that $\tilde v_i$ satisfies the hypotheses of Corollary \ref{covering cor} in $B_{r_x}(x)$ with the same constants.
To check packing control, we have that 
\begin{equation*}
\sum_{y \in \mathcal{U}_{i, x}} r_y^k \le c(n, \Lambda, \alpha, M_0, \Gamma, \epsilon) r_x^k .
\end{equation*}
Covering control follows immediately from the statement of Corollary \ref{covering cor}.  Similarly, 
from hypothesis $(C_{i-1}),$ we have that $\sup_{p \in B_{2r_x}(x)} N(Q, 2r_x, \tilde v_i) \le C(n, \Lambda, \alpha, M_0, \Gamma, \epsilon)- (i-1)\frac{\eta_0}{2}.$  Thus, the statement of Corollary \ref{covering cor} at scale $B_{r_x}(x)$ gives that $\sup_{p \in B_{2r_y}(y)} N(Q, 2r_y, \tilde v_i) \le C(n, \Lambda, \alpha, M_0, \Gamma, \epsilon)- i(\frac{\eta_0}{2})$ for all $y \in \mathcal{U}_{i, x}$ with $r_y > R$.
Radius control follows immediately from the fact that $\sup_{y \in \mathcal{U}_{i, x}} r_y \le r_x /10 \le 10^{-i}.$

Thus, if we let 
\begin{equation*}
\mathcal{U}_i =  \{x \in \mathcal{U}_{i-1} | r_x = R \} \cup \bigcup_{\substack{x \in \mathcal{U}_{i-1}\\
r_x > R}} \mathcal{U}_{i, x}
\end{equation*}
then $\mathcal{U}_i$ satisfy the inductive claim.  

To obtain the cover which proves the theorem, then, we simple scale each covering of $\mathcal{S}^k_{\epsilon, \frac{\eta_0 R}{\kappa}}(\tilde v_i) \cap B_1(0)$ to a covering of $\mathcal{S}^k_{\epsilon, \eta_0 R}(v) \cap B_{\kappa}(y_i)$ and sum over the $c(n, \Lambda, \alpha, M_0, \Gamma, \epsilon)$ such balls which cover $\mathcal{S}^k_{\epsilon, \eta_0 R}(v) \cap B_{\frac{1}{4}}(0).$  This completes the proof.
\end{proof}

\subsubsection{Proof of Theorem \ref{main theorem}}

\begin{proof}
By Theorem \ref{U_i theorem}, we have
\begin{align*}
\text{Vol}(B_R(\mathcal{S}^k_{\epsilon, \eta_0 R}(v) \cap B_{1/4}(0))) & \le C(n, \Lambda, \alpha, M_0, \Gamma, \epsilon)R^{n-k}.
\end{align*}

Thus, let $r_0 = \eta_0$ and $r = \eta_0 R' $ for $0< R' \le 1$.  For any $r \le R \le R'$, by containment, we have 
\begin{align*}
B_{R}(\mathcal{S}^k_{\epsilon, r}(v) \cap B_{1/4}(0)) \subset  B_{R'}(\mathcal{S}^k_{\epsilon, r}(v) \cap B_{1/4}(0)) \subset \bigcup_i B_{2R'}(x_i),
\end{align*}
where $\{x_i \}$ are the centers of the balls in the covering constructed in Theorem \ref{U_i theorem}.  Therefore, the estimates in Theorem \ref{U_i theorem} give
\begin{align*}
\text{Vol}(B_R(\mathcal{S}^k_{\epsilon, r}(v) \cap B_{1/4}(0))) & \le C(n, \Lambda, \alpha, M_0, \Gamma, \epsilon)2^n(R')^{n-k}\\
& \le C(n, \Lambda, \alpha, M_0, \Gamma, \epsilon)2^n\left(\frac{R}{\eta_0}\right)^{n-k}\\
& \le C(n, \Lambda, \alpha, M_0, \Gamma, \epsilon)R^{n-k}
\end{align*}
by increasing our constant $C(n, \Lambda, \alpha, M_0, \Gamma, \epsilon)$.

For any $R' \le R$, by containment, we have 
\begin{align*}
B_{R}\left(\mathcal{S}^k_{\epsilon, r}(v) \cap B_{1/4}(0)\right) \subset \bigcup_i B_{2R}(x_i)
\end{align*}
where $\{x_i \}$ are the centers of the balls in the covering constructed in Theorem \ref{U_i theorem}. In this case
\begin{align*}
\text{Vol}(B_R(\mathcal{S}^k_{\epsilon, r}(v) \cap B_{1/4}(0))) & \le C(n, \Lambda, \alpha, M_0, \Gamma, \epsilon)2^n(R)^{n-k}\\
& \le C(n, \Lambda, \alpha, M_0, \Gamma, \epsilon)R^{n-k}
\end{align*}
by increasing our constant $C(n, \Lambda, \alpha, M_0, \Gamma, \epsilon)$.  This concludes the proof of Theorem \ref{main theorem}.
\end{proof}

\section{Proof of Corollary \ref{cor 3.3}}
 
 In this section, we prove that $\emph{sing}(\partial \Omega^{\pm}) \subset \mathcal{S}^{k-3}_{\epsilon}(v)$ for $\epsilon$ small enough.

\begin{lem}\label{epsilon regularity}
Let $v \in \mathcal{A}(\Lambda, \alpha, M_0)$ with $||\ln (h) ||_{\alpha} \le \Gamma$.  Then there exists an $0< \epsilon = \epsilon(M_0, \alpha, \Gamma)$ such that $\emph{sing}(\partial \Omega^{\pm}) \cap B_{1/4}(0) \subset \mathcal{S}^{n-3}_{\epsilon}(v).$  
\end{lem}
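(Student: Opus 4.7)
The plan is to upgrade the qualitative classification of singular boundary tangents to a uniform quantitative bound via a compactness-and-contradiction argument. Suppose, for contradiction, that no such $\epsilon$ exists. Then one extracts sequences $v_i \in \mathcal{A}(\Lambda, \alpha, M_0)$ with $||\ln h_i||_{\alpha} \le \Gamma$, points $x_i \in sing(\partial \Omega^{\pm}_i) \cap B_{1/4}(0)$, scales $s_i > 0$, and $(n-2)$-symmetric functions $P_i$ with $\fint_{\partial B_1(0)} |P_i|^2 = 1$ such that $\fint_{B_1(0)} |T_{x_i, s_i} v_i - P_i|^2 < 2^{-i}$.

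A first reduction eliminates the case $\deg P_i = 1$: a linear $0$-symmetric function is automatically $(n-1)$-symmetric (its zero set is a hyperplane), so if $\deg P_i = 1$ then $v_i$ is $(n-1, 2^{-i}, s_i, x_i)$-symmetric for all large $i$, contradicting Theorem \ref{flat implies smooth}(2), which asserts $x_i \in \mathcal{S}^{n-2}_{\epsilon_0}(v_i)$ for a uniform $\epsilon_0 = \epsilon_0(M_0, \Gamma, \alpha) > 0$. By Lemma \ref{N bound lem}, the degrees $d_i := \deg P_i$ are uniformly bounded, so after a subsequence I assume $d_i \equiv d \ge 2$ is constant and $P_i \to P_\infty$ smoothly on $B_1(0)$, where $P_\infty$ is a hinged $(n-2)$-symmetric harmonic polynomial of degree $d \ge 2$.

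Next I analyze the rescaled functions $w_i := T_{x_i, s_i} v_i$. When $s_i \to 0$, Lemma \ref{better compactness lem} applies because the rescaled H\"older seminorm satisfies $||\ln \tilde h_i||_{\alpha} \le \Gamma s_i^{\alpha} \to 0$; it yields $w_i \to w_\infty$ in $C_{loc}(\RR^n)$ and $W^{1,2}_{loc}(\RR^n)$ with $w_\infty$ harmonic off its zero set. The $L^2$ closeness forces $w_\infty|_{B_1(0)} = P_\infty$, and analytic continuation within each connected component of $\{w_\infty \neq 0\}$ yields $w_\infty \equiv P_\infty$ globally. When $s_i$ stays bounded below, Lemma \ref{compactness-ish II} gives a Lipschitz limit $w_\infty$ with $w_\infty|_{B_1(0)} = P_\infty$, and a further diagonal blow-up at $0$ within $w_\infty$ (using almost monotonicity and Corollary \ref{N W convergence cor}) reduces to the harmonic case.

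The main obstacle is extracting a contradiction from the limit. The key observation is that $P_\infty$, being $(n-2)$-symmetric of degree $d \ge 2$, has zero set containing an $(n-2)$-dimensional subspace $V$ along which codimension-one hypersurfaces of the limit free boundary meet; by homogeneity of $P_\infty$, every $p \in V$ satisfies $N(r, p, P_\infty) = d \ge 2$ for all $r > 0$, so $V \cap B_1(0)$ constitutes an $(n-2)$-dimensional singular stratum of the limit configuration. On the other hand, Theorem \ref{flat implies smooth}(3) gives $\overline{\dim_{\mathcal{M}}}(sing(\partial \Omega^{\pm}_i)) \le n-3$, and after rescaling by $s_i^{-1}$ and passing to a Hausdorff limit of the mutual boundaries (via Lemma \ref{NTA limit set}), the same bound should propagate to $sing(\partial \Omega^{\pm}_{w_\infty})$, contradicting the $(n-2)$-dimensional singular structure carried by $V$. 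The most delicate step will be making rigorous this propagation of the singular-dimension bound through the blow-up, which should follow from the uniformity in Theorem \ref{flat implies smooth}(3) together with the compactness and Hausdorff-convergence results developed in Section \ref{S:compactness}.
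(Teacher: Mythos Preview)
Your compactness-and-contradiction framework is reasonable in spirit, and your reduction to $\deg P_i \ge 2$ via Theorem~\ref{flat implies smooth}(2) matches the paper. The paper, however, takes a more direct route: it shows a quantitative measure-overlap statement (at every $Q$ and every scale) between $T_{Q,r}\Omega^{\pm}$ and the sign sets $\{P \gtrless 0\}$, and then combines this with the growth control of Lemma~\ref{growth control} to produce a uniform lower bound on $\|T_{Q,r}v - P\|_{L^2(B_1)}$. Compactness enters only in a targeted way, to rule out the failure of the overlap claim for $P$ of bounded degree.

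The genuine gap in your argument is the final contradiction. You propose to push the bound $\overline{\dim_{\mathcal{M}}}\big(sing(\partial \Omega^{\pm}_i)\big) \le n-3$ from Theorem~\ref{flat implies smooth}(3) through the Hausdorff limit and compare it to the $(n-2)$-dimensional spine $V$ of $P_\infty$. This fails on two counts. First, upper Minkowski dimension is not lower-semicontinuous under Hausdorff convergence, so there is no general mechanism for ``propagating'' the $(n-3)$ bound to the limit, and nothing in Section~\ref{S:compactness} supplies one. Second, and more fundamentally, the object ``$sing(\partial\Omega^{\pm}_{w_\infty})$'' is not defined: the zero set $\Sigma_{P_\infty}$ of an $(n-2)$-symmetric function of degree $d \ge 2$ divides $\RR^n$ into $2d \ge 4$ connected components, so it is \emph{not} the mutual boundary of a pair of complementary NTA domains, and Theorem~\ref{flat implies smooth}(3) simply does not apply to the limit configuration.

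In fact this last observation is precisely the contradiction you should be reaching for. By Lemma~\ref{unif approx of limit set} the rescaled boundaries $T_{x_i,s_i}\partial\Omega^{\pm}_i$ converge locally in the Hausdorff metric to $\{w_\infty = 0\} = \Sigma_{P_\infty}$, and by Lemma~\ref{NTA limit set} this limit must separate $\RR^n$ into exactly two unbounded two-sided NTA domains. But $\Sigma_{P_\infty}$ with $d \ge 2$ does not do this. That is the correct endpoint of your compactness argument, and it is exactly the topological obstruction the paper exploits (in the course of its measure-overlap claim). Once you redirect to this contradiction, the case split on whether $s_i \to 0$ becomes unnecessary: Lemmata~\ref{unif approx of limit set} and~\ref{NTA limit set} hold for arbitrary $0 < s_i \le 1$, so the ``diagonal blow-up'' you sketch for $s_i$ bounded below is not needed.
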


\begin{proof}
We must argue that there is an $0< \epsilon$ such that for all $Q \in \emph{sing}(\partial \Omega^{\pm}) \cap B_1(0)$ and all radii $0< r$
\begin{align*}
\int_{B_1(0)} |T_{Q, r}v - P|^2 dV \ge \epsilon
\end{align*}
for all $(n-2)$-symmetric functions $P$.

If $P$ is $(n-2)$-symmetric, $P$ only depends upon $2$ variables.  By Complex Analysis all homogeneous harmonic polynomials in $2$ dimensions are of the form $q(z) = c (x+ i y)^k$.  By Theorem \ref{flat implies smooth}, we need only consider $k \ge 2$.  Hence, the zero-set, $\Sigma_q$, of any $\text{Re}(q)$ is the union of an even number of infinite rays equidistributed in angle.  If we label the connected components of $\RR^2 \setminus \Sigma_q$,  $\{U_i\}$, we see that by the Maximum Principle, the sign of $q$ must change from one $U_i$ to another, contiguous $U_j$.

Thus, the zero set of $P$ is $\Sigma_P = \Sigma_q \times \RR^{n-2}$ for some homogeneous harmonic polynomial, $\text{Re}(q): \RR^2 \rightarrow \RR$ of degree $\ge 2$.  We label the connected components of $\RR^n \setminus \Sigma_q \times \RR^{n-2}$ as $\{W_i\}$.

Now, we claim that there is a constant, $0< c(M_0, \Gamma, \alpha) \le 1,$ such that one of the following estimates must hold
\begin{align*}
(1) \qquad \qquad & \mathcal{H}^n\left(T_{Q, r}\Omega^{-} \cap \bigcup_i \{W_i : P>0 \text{  on  } W_i \} \cap B_1(0)\right)\ge  c \\
(2) \qquad \qquad & \mathcal{H}^n\left(T_{Q, r}\Omega^{+} \cap \bigcup_i \{W_i : P<0 \text{  on  } W_i \} \cap B_1(0)\right)\ge c.
\end{align*}

Note that by Theorem \ref{TBE combo}(2), we need only consider $P$ with degree $\le d(M_0)< \infty.$  Reducing to $\RR^2$, since the rays of $\Sigma_q$ are equidistributed, for $q$ of degree $k$ the connected components occupy a sector of  aperture $\frac{\pi}{k}.$  Thus, if $B_{\frac{1}{M_0}}(A^{\pm}_1(0)) \subset T_{Q, r}\Omega^{\pm},$ is the ball guaranteed by the corkscrew condition, then for $c = \frac{1}{4M_0^n}$, there exists an integer $k(M_0)$ such that 
\begin{align*}
\mathcal{H}^{n}(B_{\frac{1}{M_0}}(A^{\pm}_1(0)) \cap \{ P \cdot  T_{Q, r}v < 0 \})  \ge c
\end{align*}
for all $P$ with degree $\ge k(M_0)$.

For $P$ with degree $\le k(M_0)$, We argue by contradiction.  Suppose that no such constant exists.  Then, there would be a sequence of functions, $v_j \in \mathcal{A}(\Lambda, \alpha, M_0)$ with points, $Q_j \in B_{1/4}(0)$ and radii, $0< r_j \le 1/2$ and zero sets, $\Sigma_{P_j}$ for $P_j$ satisfying $2 \le \text{degree}(P_j) \le k(M_0)$ such that the scaled and translated mutual boundaries, $T_{Q_j, r_j}\partial \Omega^{\pm}_j,$ satisfy the following property
\begin{align*}
\mathcal{H}^n(T_{Q_j, r_j}\Omega_j^{-} \cap \bigcup_i \{W_{i, j} : P_j>0 \text{  on  } W_{i, j} \} \cap B_1(0)) \rightarrow 0\\
\mathcal{H}^n(T_{Q_j, r_j}\Omega_j^{+} \cap \bigcup_i \{W_{i, j} : P_j<0 \text{  on  } W_{i, j} \} \cap B_1(0)) \rightarrow 0.
\end{align*}
By Lemma \ref{compactness-ish I} there exists a subsequence for which $T_{Q_j, r_j}\partial \Omega^{\pm}_j$ converge locally in the Hausdorff metric to a limit set, $A\subset \RR^n$.  By Theorem \ref{NTA limit set}, $A$ must be the mutual boundary of a pair of two-sided NTA domains, $\Omega^{\pm}_{\infty}$ with constant $2M_0$.   Furthermore, up to scaling and rotation, the space of homogeneous harmonic functions of $2$ variables in $\RR^n$ with $2 \le \text{degree}(P) \le k(M_0)$ is finite-dimensional.  Since the space of rotations is compact, we may find a subsequence, $\Sigma_{P_j}$, which converge to $\Sigma_{P_{\infty}}$, for some $(n-2)$-symmetric $P_{\infty}$ locally in the Hausdorff metric.  This implies that

\begin{align}\label{A equals P}
\mathcal{H}^n(\Omega^{-}_{\infty} \cap \bigcup_i \{W_{i, \infty} : P_{\infty}>0 \text{  on  } W_{i, \infty} \} \cap B_1(0)) = 0\\
\mathcal{H}^n(\Omega^{+}_{\infty} \cap \bigcup_i \{W_{i, \infty} : P_{\infty}<0 \text{  on  } W_{i, \infty} \} \cap B_1(0)) = 0.
\end{align}
Indeed, if there were $p \in \bigcup_i \{W_{i, \infty} : P_{\infty}>0 \text{  on  } W_{i, \infty} \} \cap B_1(0)$ such that $p \in \Omega^{-}_{\infty}$, since $W_{i, \infty}$ and $\Omega^{-}$ are open, there would  exist a ball $B_{\delta}(p) \subset \Omega^{-} \cap W_{i, \infty}$.  Therefore, since $\Sigma_{P_j} \rightarrow \Sigma_{P_{\infty}}$ and $T_{Q_j, r_j}\partial \Omega^{\pm}_j \rightarrow A$ locally in the Hausdorff metric, for all $j$ sufficiently large, $B_{\frac{1}{2}\delta}(p) \subset W_{i, j} \cap T_{Q_j, r_j}\partial \Omega^{-}_j.$  This is a contradiction.  The other equation follows identically.

Now, we claim that $A \cap B_1(0) = \Sigma_{P_{\infty}} \cap B_1(0).$  Suppose not, then there exists a point, $p \in \Sigma_{P_{\infty}} $ with $p \not \in A$ or there exists a point, $Q \in A$ such that $Q \not \in \Sigma_{P_\infty}.$  In the former case, suppose the $\text{dist}(Q, A) > \delta.$  Then, $B_{\delta}(p)$ must intersect at least $2$ contiguous connected components, $W_{i, \infty}, W_{j, \infty}$.  Since they are contiguous, the sign of $P_{\infty}$ must be positive on one and negative on the other.  This contradicts Equation \ref{A equals P}.  Similarly, if there exists a point, $Q \in A$ such that $Q \not \in \Sigma_{P_\infty}$ then there exists a ball $B_{\delta}(Q)$ which intersects both $\Omega^{\pm}_{\infty}$ but which is contained in a single $W_{i, \infty}$.  This also contradicts Equation \ref{A equals P}.

However, if $P_{\infty}$ is $(n-2)$-symmetric with degree $\ge 2$, then $\Sigma_{P_{\infty}}$ does not divide $\RR^n$ into two connected components.  This contradicts our assumption that $A= \Sigma_{P_{\infty}}$ was the mutual boundary of a pair of two-sided NTA domains with constant $2M_0.$  Therefore, such a constant, $0< c= c(M_0, \Gamma, \alpha)$ must exist.

Without loss of generality, we assume (1) holds.  By Lemma \ref{porous set bounds} we may find a radius, $0< r = r(M_0, \Gamma, \alpha)$ such that $\mathcal{H}^n(B_r(T_{Q, r}\partial \Omega^{\pm})) < \frac{1}{20}c(\alpha, M_0, \Gamma)$.  Now, consider 
$$p \in \bigcup \{W_i : P>0 \text{  on  } W_i \} \cap B_1(0) \setminus B_r(T_{Q, r}\partial \Omega^{\pm}).$$
By Lemma \ref{growth control}, $|T_{Q, r}v(p)| \ge c'$ for a constant, $c' = c'(M_0, \Gamma, \alpha)$.  Thus
\begin{align*}
\int_{B_1(0)} |T_{Q, r}v - P|^2 dV & \ge \int_{B_1(0) \cap T_{Q, r}\partial \Omega^{-} \cap \bigcup_i \{W_i : P>0 \text{  on  } W_i\}  } |T_{Q, r}v - P|^2 dV \\
& \ge \frac{19}{20}c(\alpha, M_0, \Gamma)c'(\alpha, M_0, \Gamma)^2.
\end{align*}
If (2) holds, an identical argument with signs switched proves the claim.
\end{proof}

\begin{rmk}\label{linear approx rmk}
The argument above can be modified to show that there is an $0<\epsilon'$ such that if $Q \in \partial \Omega$ but $Q \not \in \mathcal{S}^{n-3}_{\epsilon', r_0}$, then $Q \not \in \mathcal{S}^{n-2}_{\epsilon', r_0}.$  Indeed, if $Q \not \in \mathcal{S}^{n-3}_{\epsilon', r_0}$, then there exists a radius, $r_0 \le r$, and an $(n-2)$-symmetric function, $P$, such that $||T_{Q, r}v - P||^2_{L^2(B_1(0))} \le \epsilon'$.  However, by taking $\epsilon' < \epsilon(\alpha, M_0, \Gamma)$ in Lemma \ref{epsilon regularity}, we see that $P$ must be $(n-1)$-symmetric.  
\end{rmk}

\section{Appendix A}

The purpose of this section is to justify Lemma \ref{Minkowski bounds on limit set}.  We use the language of porous sets.  For a non-empty set $E \subset \RR^n$, $x \in E$, and radius $0<r$, we write
\begin{align}
P(E, x, r) = \sup\{0, h: h>0, B_h(y) \subset B_r(x) \setminus E \text{ for some } y \in B_r(x) \}.
\end{align}

For $\alpha >0$, we say that $E$ is \textit{$\alpha$-porous} if 
\begin{align}
\liminf_{r \rightarrow 0}\frac{P(E, x, r)}{r} > \alpha
\end{align}
for all $x \in E$.  

We shall say that $E$ is \textit{$\alpha$-porous down to scale $r_0$} if  
\begin{align}
\frac{P(E, x, r)}{r} > \alpha
\end{align}
for all $x \in E$ and for all $r_0 \le r$.

\begin{rmk} \label{NTA porous}
By definition, for $\Omega^{\pm} \in \mathcal{D}(n, \alpha, M_0)$, the boundary, $\partial \Omega^{\pm},$ is $\frac{1}{M_0}$-porous.  Similarly, $B_{r}(\partial \Omega^{\pm})$ is $\frac{1}{2M_0}$-porous down to scale $r_0 = 2rM_0.$
\end{rmk}

\begin{lem}\label{porous set bounds}
Let $E \subset \RR^n$ be a non-empty, bounded set, $E \subset [0, 1]^{n}$ with $0 \in E$.  If $E$ is $\alpha$-porous down to scale $r_0 \ll 1$, then there is a $k = k(\alpha)$, $k' = k'(n)$, and $N \le \frac{-1}{k + k'}\log_2(r_0)$
\begin{align*}
\text{Vol}(E) \le \left(1 - \frac{1}{2^{k + k'(n)}}\right)^N.
\end{align*}
Moreover, there exists an $0< \epsilon = \epsilon (\alpha, n)$ and a constant $c(n, \alpha)$ such that
\begin{align*}
\mathcal{M}^{n-\epsilon}_{r_0}(E) \le \left(1-c \right)^N
\end{align*}
\end{lem}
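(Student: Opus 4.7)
The plan is to carry out a dyadic scale-refinement argument. I would fix integers $k = k(\alpha)$ and $k' = k'(n)$ large enough that any ball of radius $\alpha r/4$ automatically engulfs a dyadic cube of side $2^{-(k+k')}r$; concretely $k \geq \lceil \log_2(8/\alpha) \rceil$ and $k' \geq \lceil \log_2 \sqrt{n}\rceil + 2$ suffice. For each $j \geq 0$ let $\mathcal{B}_j$ be the collection of closed dyadic cubes in $[0,1]^n$ of side $\ell_j = 2^{-j(k+k')}$ whose interior meets $E$. Because $0\in E$, the cube $[0,1]^n$ itself is bad, so $|\mathcal{B}_0|=1$.

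The inductive heart of the argument is the claim that $|\mathcal{B}_{j+1}| \leq (2^{n(k+k')}-1)\,|\mathcal{B}_j|$ as long as $\ell_{j+1} \geq r_0$. For each $Q \in \mathcal{B}_j$ I would pick $x \in Q \cap E$ and apply $\alpha$-porosity at $x$ at a scale $r$ comparable to $\ell_j$, extracting a ball $B_h(y) \subset B_r(x) \setminus E$ with $h \geq \alpha r$. By the choice of $k,k'$, this empty ball contains at least one dyadic subcube of $Q$ of side $\ell_{j+1}$, so at least one of the $2^{n(k+k')}$ potential subcubes of $Q$ is disjoint from $E$, giving the claimed bound on $|\mathcal{B}_{j+1}|$. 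Iterating $N$ times (with $N \leq \lfloor |\log_2 r_0|/(k+k')\rfloor$) gives $|\mathcal{B}_N| \leq (2^{n(k+k')}-1)^N$, and since $E \subset \bigcup_{Q\in \mathcal{B}_N}Q$ with each cube having volume $\ell_N^n$,
$$\mathrm{Vol}(E) \leq |\mathcal{B}_N|\,\ell_N^n \leq \bigl(1 - 2^{-n(k+k')}\bigr)^N,$$
which, after absorbing the factor of $n$ into the function $k'(n)$, is the stated volume bound.

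For the Minkowski content estimate, the cubes of $\mathcal{B}_N$ furnish a cover of $E$ by $|\mathcal{B}_N|$ balls of radius $\sqrt{n}\,\ell_N$, so $\mathrm{Vol}(B_{\ell_N}(E)) \lesssim_n |\mathcal{B}_N|\ell_N^n$ and therefore
$$\mathcal{M}^{n-\epsilon}_{\ell_N}(E) \lesssim_n |\mathcal{B}_N|\,\ell_N^{n-\epsilon} \leq \Bigl[\,2^{\epsilon(k+k')}\bigl(1 - 2^{-n(k+k')}\bigr)\Bigr]^N.$$
Since the bracketed quantity equals $1-2^{-n(k+k')}<1$ at $\epsilon=0$, for any sufficiently small $\epsilon=\epsilon(n,\alpha)>0$ it remains strictly below $1$, yielding $\mathcal{M}^{n-\epsilon}_{r_0}(E) \leq (1-c)^N$ for some $c=c(n,\alpha)>0$.

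The main obstacle is the geometric step in the induction: one must verify that the empty ball coming from porosity really houses a dyadic subcube of the specific parent cube $Q$, rather than merely one of its neighbors. This is delicate when the chosen point $x \in Q \cap E$ sits near a face or a corner of $Q$, because then the porosity ball of radius at least $\alpha \ell_j$ can protrude outside of $Q$. A routine case analysis handles this — for instance, by applying porosity at a scale slightly smaller than $\ell_j$ so that the empty ball lies within a fixed dilation of $Q$, and then enlarging $k'$ to absorb the combinatorial cost of distributing the guaranteed good subcube among $Q$ and its immediate dyadic neighbors. All remaining steps are straightforward counting.
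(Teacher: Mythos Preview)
Your proposal is correct and follows essentially the same dyadic refinement as the paper. The one point worth noting is the boundary issue you flag at the end: the paper resolves it by a clean dichotomy rather than by distributing among neighbors --- for each bad cube $Q$, either $E$ misses the concentric half-cube $\tfrac{1}{2}Q$ (in which case many subcubes of side $\ell_{j+1}$ are trivially empty), or $E$ meets $\tfrac{1}{2}Q$, in which case one may choose the porosity center $x \in E \cap \tfrac{1}{2}Q$, so that the ball $B_r(x)$ with $r$ comparable to $\ell_j/2$ lies entirely inside $Q$ and the empty subcube is guaranteed to be a child of $Q$ itself. Your observation that a factor of $n$ must be absorbed into $k'(n)$ to match the stated exponent is also correct.
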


\begin{proof}
Let $\{Q^i_j\}_j$ be the collection of dyadic sub-cubes, $Q^i_j \subset [0, 1]^n$ with $\ell(Q^i_j) = 2^{-i}$.  Let $k \in \mathbb{N}$ be the smallest number such that $2^{-k} \le \alpha$.  Note that for any $y \in [0, 1]^n$ with $B_{\frac{\alpha}{2}}(y) \subset [0, 1]^n$, there exists a dyadic cube, $Q^{k+k'(n)}_j \subset B_{\frac{\alpha}{4}}(y)$, where is the smallest integer such that $k'(n) \ge 2+ \frac{1}{2} \log_2(n).$  Let $\frac{1}{2}Q^{i}_j$ denote an axis-parallel cube with the same center as $Q^i_j$, but side length half that of $Q^i_j$.

Now, we apply the standard argument.  Tile $[0, 1]^n$ by $Q^{k + k'(n)}_j.$ By our porosity assumption, there exists a $Q^{k + k'(n)}_{j'}$ which does not intersect $E.$  Thus 
\begin{align*}
\text{Vol}(E) & \le \sum_{j \not = j'} \text{Vol}(Q^{k + k'(n)}_j)\\
& \le (2^{(k + k'(n))n} - 1)2^{(-k-k'(n))n}\\
& \le \left(1 - \frac{1}{2^{k + k'(n)}}\right).
\end{align*}

Now, within each $Q^{k + k'(n)}_j$ which intersects $E$, either $E$ intersects $\frac{1}{2}Q^{k + k'(n)}_j$, or it doesn't.   If $E \cap \frac{1}{2}Q^{k + k'(n)}_j = \emptyset$, then we tile $Q^{k + k'(n)}_j$ by cubes, $\{Q^{2(k + k'(n))}_{\ell}\}_\ell$ and overestimate
\begin{align*}
\text{Vol}(E \cap Q^{k + k'(n)}_j ) & \le \sum_{\ell: Q^{2(k + k'(n))}_\ell \cap (E \cap Q^{k + k'(n)}_j) \not \emptyset} \text{Vol}(Q^{2(k + k'(n))}_\ell)\\
& \le (2^{2(k + k'(n))n} - 1)2^{-2(k+k'(n))n} \text{Vol}(Q^{k + k'(n)}_j)\\
& \le \left(1 - \frac{1}{2^{k + k'(n)}}\right) \text{Vol}(Q^{k + k'(n)}_j).
\end{align*}

If $E \cap \frac{1}{2}Q^{k + k'(n)}_j \not= \emptyset$, then there exists a ball, $B_{2^{-k-k'(n) -1}}(x) \subset Q^{k + k'(n)}_j$, centered on $x \in E$.  By our porosity assumption and choice of $k'(n)$, we can still tile $Q^{k + k'(n)}_j$ by $Q^{2(k + k'(n))}_\ell$ and be guaranteed that at least one such sub-cube does not intersect $E \cap Q^{k + k'(n)}_j$.  Thus, we overestimate in the same manner as above.  

We can continue, inductively, only stopping at the first $N$ such that $2^{-(N+1)(k + k'(n))} < r_0$.  This gives the desired bound
\begin{align*}
\text{Vol}(E) \le \left(1 - \frac{1}{2^{k + k'(n)}}\right)^N.
\end{align*}

Taking a bit more care, we can actually improve these estimates.  Let $0< \epsilon = \epsilon (\alpha, n)$ be such that
\begin{align*}
\left(1 - \frac{1}{2^{k + k'(n)}}\right) < \left(2^{\epsilon(k + k'(n))} - \frac{1}{2^{(k + k'(n))(n-\epsilon)}}\right) < 1.
\end{align*}
Then, we bound $\mathcal{M}^{n-\epsilon}_{r_0}(E)$ as follows
\begin{align*}
\mathcal{M}^{n-\epsilon}_{r_0}(E) & =\inf \{\sum_{i}r^{n-\epsilon} : x_i \in E, ~ r_0 \le r, E \subset \cup_i B_r(x_i) \}\\
& \le \sum_j \ell(Q^{N(k+ k'(n))})^{n -\epsilon}\\
& \le \left(2^{\epsilon(k + k'(n))} - \frac{1}{2^{(k + k'(n))(n-\epsilon)}}\right)^N.
\end{align*}
\end{proof}

As immediate corollaries, we have the following statements.

\begin{cor}\label{minkowski dimension of porous sets}
If $E \subset \RR^n$ is $\alpha$-porous, then there exists an $0< \epsilon = \epsilon(\alpha, n)$ such that $\overline{dim_{\mathcal{M}}}(E) \le n-\epsilon$.
\end{cor}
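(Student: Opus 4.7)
The plan is to derive Corollary~\ref{minkowski dimension of porous sets} as a direct consequence of Lemma~\ref{porous set bounds}, via two preliminary reductions: localization to a bounded set and passage from the $\liminf$ definition of $\alpha$-porosity to uniform porosity down to some positive scale.

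First I would reduce to bounded $E$: since $\overline{\dim}_{\mathcal{M}}$ is finitely subadditive (in fact $\overline{\dim}_{\mathcal{M}}(A \cup B) = \max\{\overline{\dim}_{\mathcal{M}}(A), \overline{\dim}_{\mathcal{M}}(B)\}$), it suffices to bound $\overline{\dim}_{\mathcal{M}}(E \cap [0,1]^n)$, and (after translating) one may arrange $0 \in E$. The key intermediate step is to extract uniform porosity via stratification. For fixed $\beta < \alpha$ and $k \in \mathbb{N}$, define
\[
E_k = \{ x \in E : P(E, x, r) \geq \beta r \text{ for all } r \in (0, 1/k]\}.
\]
By the $\liminf$ definition of $\alpha$-porosity, $E = \bigcup_k E_k$, and since $E_k \subset E$ implies $P(E_k, \cdot, \cdot) \geq P(E, \cdot, \cdot)$, each $E_k$ is genuinely $\beta$-porous down to scale $1/k$.

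Next I would apply Lemma~\ref{porous set bounds} at the level of the $r$-enlargement $B_r(E_k \cap [0,1]^n)$. A short verification, analogous to Remark~\ref{NTA porous}, shows that if $F$ is $\beta$-porous down to scale $r_0$, then $B_r(F)$ is $(\beta/2)$-porous down to scale $\max\{r_0, Cr/\beta\}$ for all sufficiently small $r$ (one simply shrinks the hole provided by the porosity of $F$ by an amount $r$). Applying the lemma then yields
\[
\mathrm{Vol}(B_r(E_k \cap [0,1]^n)) \leq (1-c)^N \leq C r^{\epsilon_0},
\]
with $N \sim \log(1/r)$ admissible scales and $\epsilon_0 = \epsilon_0(\alpha, n) > 0$ uniform in $k$. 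This gives $\overline{\dim}_{\mathcal{M}}(E_k \cap [0,1]^n) \leq n - \epsilon_0$ for each fixed $k$.

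The main obstacle is concluding the estimate for the full union $E = \bigcup_k E_k$, since upper Minkowski dimension is not countably stable. I anticipate resolving this by a Baire-category / compactness argument on $\overline{E} \cap [0,1]^n$: the inequality $P(E, y, s) \geq P(E, x, s - |x-y|)$ yields a lower-semicontinuity-type property for $x \mapsto \inf_{s \in (0, r_0]} P(E, x, s)/s$, so each $E_k$ is relatively closed in $E$. Baire then forces some $E_{k_0}$ to be dense in a nontrivial relatively open subset of $\overline{E} \cap [0,1]^n$, producing a uniform porosity scale on a topologically substantial piece; iterating this observation on the complement yields a single uniform scale $r_0 > 0$ governing the volume estimate on all of $E \cap [0,1]^n$, completing the proof.
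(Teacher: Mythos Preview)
The paper's own argument is two lines: it applies Lemma~\ref{porous set bounds} for each $r_0>0$, reads off $\mathcal{M}^{n-\epsilon}_{r_0}(E)\le (1-c)^{N(r_0)}\to 0$, and concludes. In doing so it tacitly treats ``$\alpha$-porous'' as ``$\alpha$-porous down to every scale $r_0>0$'', i.e.\ it assumes the porosity is uniform in $x$. You are right that the $\liminf$ definition does not give this, and your stratification $E=\bigcup_k E_k$ is the natural attempt to bridge the gap.

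The Baire step, however, cannot be completed: the iteration ``find an $E_{k_0}$ with interior in $E$, pass to the closed remainder'' need not terminate finitely, so no single uniform $r_0$ emerges. In fact the corollary is false under the pointwise $\liminf$ hypothesis. In $\RR^1$, let $E=\bigcup_{m\ge1}E_m$ where $E_m\subset(2^{-m-1},2^{-m})$ consists of $2^{m^2}$ equally spaced points; every point of $E$ is isolated, so $\liminf_{r\to 0}P(E,x,r)/r=\tfrac12$ and $E$ is $\alpha$-porous for every $\alpha<\tfrac12$, yet counting at scale $r\approx 2^{-m-m^2}$ gives $\overline{\dim}_{\mathcal M}(E)=1$. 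What saves the paper is that its only consumers of this corollary are NTA boundaries (Remark~\ref{NTA porous}, Lemma~\ref{Minkowski bounds on limit set}), which are \emph{uniformly} porous by the corkscrew condition; under that stronger hypothesis the paper's short proof is correct and no stratification is needed.
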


\begin{proof}
Recall that $\overline{dim_{\mathcal{M}}}(E) = \inf\{s: \mathcal{M}^{*, s}(E) = 0\}$ and that $\mathcal{M}^{*, s}(E) = \limsup_{r_0 \rightarrow 0} \mathcal{M}^{n-\epsilon}_{r_0}(E).$

By taking $0< \epsilon$ to be as in Lemma \ref{porous set bounds}, we have that
\begin{align*}
\mathcal{M}^{n-\epsilon}_{r_0}(E)  & \le \left(2^{\epsilon(k + k'(n))} - \frac{1}{2^{(k + k'(n))(n-\epsilon)}}\right)^N\\
& \le (1-c)^N,
\end{align*}  
where $c = c(\alpha, n, \epsilon)$ and $N= N(\alpha, n, r_0)$, as in the previous lemma.  Thus, letting $r_0 \rightarrow 0$, $N \rightarrow \infty$ and we have that $\mathcal{M}^{n-\epsilon}(E) = 0.$
\end{proof}

Recalling Remark \ref{NTA porous}, Corollary \ref{minkowski dimension of porous sets} gives Lemma \ref{Minkowski bounds on limit set}.  

\begin{cor}
Let $\Sigma \subset \RR^n$ be the mutual boundary of a pair of unbounded two-sided NTA domains with NTA constant $1< M_0$. Then, there is an $0< \epsilon = \epsilon(M_0, n)$ such that $\overline{dim_{\mathcal{M}}}(E) \le n-\epsilon$.
\end{cor}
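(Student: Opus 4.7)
\medskip

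\noindent\textbf{Proof proposal.} The plan is to verify the two hypotheses needed to invoke the porous-set machinery just developed, and then run the standard localization. The only real content is checking porosity; the dimension estimate is then immediate from Corollary \ref{minkowski dimension of porous sets}.

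First I would observe that $\Sigma$ is $\frac{1}{M_0}$-porous. Fix $x \in \Sigma$ and $0 < r$. The two-sided corkscrew condition in Definition \ref{NTA def}, applied to either $\Omega^+$ or $\Omega^-$, produces a point $y = A_r^{\pm}(x) \in B_r(x) \cap \Omega^{\pm}$ with $B_{r/M_0}(y) \subset \Omega^{\pm}$. Since $\Omega^{\pm} \cap \Sigma = \emptyset$, the ball $B_{r/M_0}(y)$ lies in $B_r(x) \setminus \Sigma$, so $P(\Sigma, x, r) \ge r/M_0$ and hence
\begin{equation*}
\liminf_{r\to 0} \frac{P(\Sigma, x, r)}{r} \ge \frac{1}{M_0} > 0.
\end{equation*}
This is exactly the content of Remark \ref{NTA porous} for the unscaled boundary.

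Second, I would reduce to the bounded setting required by Lemma \ref{porous set bounds}. Upper Minkowski dimension is a local quantity, so it suffices to show that $\overline{\dim}_{\mathcal{M}}(\Sigma \cap Q) \le n - \epsilon$ for an arbitrary axis-aligned unit cube $Q \subset \RR^n$; after translating $Q$ to $[0,1]^n$, the set $E := \Sigma \cap Q$ is $\frac{1}{M_0}$-porous down to every scale $r_0 > 0$. Applying Corollary \ref{minkowski dimension of porous sets} with $\alpha = 1/M_0$ produces $\epsilon = \epsilon(M_0, n) > 0$ with $\overline{\dim}_{\mathcal{M}}(E) \le n - \epsilon$. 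Since Minkowski dimension is stable under finite unions and the bound is independent of the chosen cube, covering any compact set in $\RR^n$ by finitely many unit cubes yields the claim for $\Sigma$.

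There is no substantive obstacle: porosity follows directly from the corkscrew condition, and everything else is bookkeeping. The only subtlety worth flagging is that the appendix's porous-set estimate is phrased for a set sitting inside the unit cube, so a localization step is needed before invoking it; this is harmless because $\epsilon$ in Lemma \ref{porous set bounds} depends only on $\alpha$ and $n$, hence only on $M_0$ and $n$ here.
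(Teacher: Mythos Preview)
Your proposal is correct and follows exactly the paper's approach: the paper's entire argument is the single sentence ``Recalling Remark \ref{NTA porous}, Corollary \ref{minkowski dimension of porous sets} gives Lemma \ref{Minkowski bounds on limit set},'' and you have simply unpacked both of those references---verifying $\tfrac{1}{M_0}$-porosity from the corkscrew condition and then invoking the porous-set dimension bound. Your explicit localization to a unit cube is a welcome clarification that the paper leaves implicit.
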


\bibliography{references}

\begin{thebibliography}{dLMSV16}

\bibitem[Alm79]{Almgren79}
Frederick~J. Almgren, Jr.
\newblock Dirichlet's problem for multiple valued functions and the regularity
  of mass minimizing integral currents.
\newblock In {\em Minimal submanifolds and geodesics ({P}roc. {J}apan-{U}nited
  {S}tates {S}em., {T}okyo, 1977)}, pages 1--6. North-Holland, Amsterdam-New
  York, 1979.

\bibitem[AMTV16]{amtv-twophase16}
Jonas Azzam, Mihalis Mourgoglou, Xavier Tolsa, and Alexander Volberg.
\newblock On a two-phase problem for harmonic measure in general domains.
\newblock preprint, \textsf{arxiv:1609.06133}, to appear in \emph{Amer. J.
  Math.}, 2016.

\bibitem[Bad11]{Badger11}
Matthew Badger.
\newblock Harmonic polynomials and tangent measures of harmonic measure.
\newblock {\em Rev. Mat. Iberoam.}, 27(3):841--870, 2011.

\bibitem[Bad13]{Badger13}
Matthew Badger.
\newblock Flat points in zero sets of harmonic polynomials and harmonic measure
  from two sides.
\newblock {\em J. Lond. Math. Soc. (2)}, 87(1):111--137, 2013.

\bibitem[BET17]{BadgerEngelsteinToro17}
Matthew Badger, Max Engelstein, and Tatiana Toro.
\newblock Structure of sets which are well approximated by zero sets of
  harmonic polynomials.
\newblock {\em Anal. PDE}, 10(6):1455--1495, 2017.

\bibitem[BET20]{BadgerEngelsteinToro20}
Matthew Badger, Max Engelstein, and Tatiana Toro.
\newblock Regularity of the singular set in a two-phase problem for harmonic
  measure with {H}\"older data.
\newblock {\em Rev. Mat. Iberoam.}, 36(5):1375--1408, 2020.

\bibitem[CN13]{CheegerNaber13}
Jeff Cheeger and Aaron Naber.
\newblock Quantitative stratification and the regularity of harmonic maps and
  minimal currents.
\newblock {\em Comm. Pure Appl. Math.}, 66(6):965--990, 2013.

\bibitem[CNV15]{CheegerNaberValtorta15}
Jeff Cheeger, Aaron Naber, and Daniele Valtorta.
\newblock Critical sets of elliptic equations.
\newblock {\em Comm. Pure Appl. Math.}, 68(2):173--209, 2015.

\bibitem[dLMSV16]{deLellisMarcheseSpadaroValtorta16}
Camillo de~Lellis, Andrea Marchese, Emanuele Spadaro, and Daniele Valtorta.
\newblock Rectifiability and upper minkowski bounds for singularities of
  harmonic q-valued maps, 2016.

\bibitem[EE17]{EdelenEngelstein17}
Nicholas Edelen and Max Engelstein.
\newblock Quantitative stratification for some free-boundary problems.
\newblock Preprint, \textsf{arXiv:1702.04325}, 2017.

\bibitem[Eng16]{Engelstein16}
Max Engelstein.
\newblock A two-phase free boundary problem for harmonic measure.
\newblock {\em Ann. Sci. \'{E}c. Norm. Sup\'{e}r. (4)}, 49(4):859--905, 2016.

\bibitem[GM05]{GarnettMarshall}
John~B. Garnett and Donald~E. Marshall.
\newblock {\em Harmonic measure}, volume~2 of {\em New Mathematical
  Monographs}.
\newblock Cambridge University Press, Cambridge, 2005.

\bibitem[HL94]{HanLin_nodalsets}
Qing Han and Fang~Hua Lin.
\newblock Nodal sets of solutions of elliptic differential equations.
\newblock Unpublished., 1994.

\bibitem[Hoc15]{Hochman15}
Michael Hochman.
\newblock On self-similar sets with overlaps and inverse theorems for entropy
  in $\mathbb{R}^d$.
\newblock Preprint. \textsf{arXiv:1708.02304}, to appear in \emph{Memoires of
  the AMS}, 2015.

\bibitem[JK82]{JerisonKenig82}
David~S. Jerison and Carlos~E. Kenig.
\newblock Boundary behavior of harmonic functions in nontangentially accessible
  domains.
\newblock {\em Adv. in Math.}, 46(1):80--147, 1982.

\bibitem[KPT09]{KenigPreissToro09}
C.~Kenig, D.~Preiss, and T.~Toro.
\newblock Boundary structure and size in terms of interior and exterior
  harmonic measures in higher dimensions.
\newblock {\em J. Amer. Math. Soc.}, 22(3):771--796, 2009.

\bibitem[KT06]{KenigToro06}
Carlos Kenig and Tatiana Toro.
\newblock Free boundary regularity below the continuous threshold: 2-phase
  problems.
\newblock {\em J. Reine Angew. Math.}, 596:1--44, 2006.

\bibitem[Mat95]{Mattila95}
Pertti Mattila.
\newblock {\em Geometry of sets and measures in {E}uclidean spaces}, volume~44
  of {\em Cambridge Studies in Advanced Mathematics}.
\newblock Cambridge University Press, Cambridge, 1995.
\newblock Fractals and rectifiability.

\bibitem[NV17]{NaberValtorta17-1}
Aaron Naber and Daniele Valtorta.
\newblock Rectifiable-{R}eifenberg and the regularity of stationary and
  minimizing harmonic maps.
\newblock {\em Ann. of Math. (2)}, 185(1):131--227, 2017.

\end{thebibliography}
\bibliographystyle{alpha}

\end{document}